\documentclass{article}
\usepackage{graphicx} 
\usepackage{subfig}
\usepackage{amsfonts}
\usepackage{amsmath}
\usepackage{amssymb}
\usepackage{url}
\usepackage{fancyhdr}
\usepackage{indentfirst}
\usepackage{enumerate}
\usepackage{longtable}
\usepackage{float} 
\usepackage{pifont}
\usepackage{tikz}
\usepackage{tikz-cd}
\usepackage{etoolbox}
\newcommand{\circled}[2][]{\tikz[baseline=(char.base)]
    {\node[shape = circle, draw, inner sep = 1pt]
    (char) {\phantom{\ifblank{#1}{#2}{#1}}};%
    \node at (char.center) {\makebox[0pt][c]{#2}};}}
\robustify{\circled}
\usepackage{bbm}

\usepackage{dsfont}
\usepackage[colorlinks=true,citecolor=blue]{hyperref}
\usepackage{amsthm}
\usepackage{color}
\usepackage{natbib}

\usepackage{comment}
\usepackage{capt-of}
\usepackage{multirow}
\numberwithin{equation}{section}
\def\d{\mathrm{d}}

\newcommand{\E}{\mathbb{E}}

\newcommand{\F}{\mathcal{F}}

\newcommand{\R}{\mathbb{R}}
\newcommand{\N}{\mathbb{N}}
\newcommand{\p}{\mathbb{P}}

\newcommand{\M}{\mathcal{M}}

\renewcommand{\[}{\left[}

\renewcommand{\ge}{\geqslant}
\renewcommand{\le}{\leqslant}
\renewcommand{\geq}{\geqslant}
\renewcommand{\leq}{\leqslant}
\renewcommand{\epsilon}{\varepsilon}

\theoremstyle{plain}
\newtheorem{theorem}{Theorem}

\newtheorem{proposition}{Proposition}
\theoremstyle{definition}
\newtheorem{definition}{Definition}
\newtheorem{example}{Example}

\newtheorem{assumption}{Assumption}
\theoremstyle{remark}
\newtheorem{remark}{Remark}

\newcommand{\cet}{\begin{center}}
	\newcommand{\ecet}{\end{center}}

\usepackage{refcount}
\setrefcountdefault{1}

\newcounter{saveexample}

\allowdisplaybreaks[4]
\usepackage{setspace}

\title{
Time-Inconsistent Singular Control Problems with a Running Minimum Process}

\author{
	 Rui Dai\thanks{\footnotesize Department of Mathematical Sciences, Tsinghua University, China. Email: \texttt{dair25@mails.tsinghua.edu.cn}}
     \and
     Guohui Guan\thanks{\footnotesize Center for Applied Statistics and School of Statistics, Renmin University of China, Beijing 100872, China. Email:\texttt{ guangh@ruc.edu.cn}}
    \and
    Zongxia Liang\thanks{\footnotesize Department of Mathematical Sciences, Tsinghua University, China. Email: \texttt{liangzongxia@tsinghua.edu.cn}}
	\and 	
    Xiaodong Luo\thanks{\footnotesize Department of Mathematical Sciences, Tsinghua University, China. Email: \texttt{luoxd21@mails.tsinghua.edu.cn}}
}

\begin{document}
\date{}

\maketitle

\begin{abstract}	

This paper develops a time-inconsistent and path-dependent singular control framework incorporating a running minimum process. We derive a verification theorem that characterizes equilibria under substantially weaker regularity conditions than those imposed in the existing literature, and we obtain a stronger notion of equilibrium by enlarging the class of feasible perturbations. We first establish the mathematical foundations of the framework by proving the existence and uniqueness of strong solutions to a class of Skorokhod reflection problems involving the running minimum and by characterizing admissible singular control laws. We further demonstrate the existence of an equilibrium through a dividend problem, where the running minimum leads to a highly coupled and nonlinear differential-algebraic equation system. For this problem, we prove the monotonicity and local concavity of the dividend boundary, thereby providing a mathematical explanation for dividend smoothing and scarring effects. Numerical simulations confirm the robustness of the equilibrium across a wide range of parameter values.

\vspace{0.1in}	
			\noindent\textbf{Keywords}: time-inconsistent singular control; equilibrium singular control law; running minimum process; Skorokhod reflection problem; optimal dividend.\\
    
	\noindent \textbf{MSC codes}: 49J20, 93E20, 35R35\\

\end{abstract}
\section{Introduction}\label{sec:intr}
Singular control is typically modeled as an adapted, c\`{a}dl\`{a}g, nondecreasing stochastic process that represents the cumulative amount of control or intervention. The study of singular control problems originated in spacecraft control; see, e.g., \citet*{bather1967,Bene1980}. Since then, singular control has been extensively developed and applied to a wide range of problems in mathematical finance, including portfolio optimization with transaction costs, irreversible investment, and resource extraction; see, e.g., \citet*{davis1990,KL2001,ferrari2018}. Among these applications, the optimal dividend problem has been a particularly active research area and serves as the motivating context for the model studied in this paper. Representative studies on singular control approaches to optimal dividends include \citet*{jeanblanc1995,asmussen1997,kulenko2008}.

Time inconsistency, first systematically studied by \citet*{S1937}, arises frequently in financial and economic problems. Typical sources include non-exponential discounting, such as hyperbolic discounting, and nonlinear criteria, such as the mean-variance objective. Singular control problems with time inconsistency have recently been studied by \citet*{LLY2024}, \citet*{liang2025}, \citet*{dai2024}, and \citet*{CZ2025}. Specifically, \citet*{LLY2024} examine singular control under non-exponential discounting, whereas \citet*{liang2025}, \citet*{dai2024}, and \citet*{CZ2025} investigate related problems under the mean-variance criterion, albeit in different model settings. These studies also differ in their equilibrium formulations. In \citet*{LLY2024} and \citet*{liang2025}, the post-perturbation equilibrium control is governed by a reflection region determined through a Skorokhod reflection problem. By contrast, \citet*{dai2024} and \citet*{CZ2025} prescribe a specific form of the equilibrium control that is independent of the wealth process.

However, existing studies, including \citet*{LLY2024} and \citet*{CZ2025}, still have several limitations. First, they do not incorporate historical performance or other path-dependent effects. In these models, the payoff functional depends only on the current state, and the resulting equilibrium strategies are Markovian. Second, the absence of well-posedness for Skorokhod problems with coefficients depending on the control poses a intrinsic limitation on the admissible singular control laws within the framework of \cite{LLY2024}. Third, the verification theorems in \cite{LLY2024} impose relatively stringent regularity conditions, which may exclude admissible equilibrium candidates and, in some cases, lead to the non-existence of equilibrium solutions. 

To capture path-dependent effects, we introduce the running minimum process to represent historical performance. This modeling choice is well supported by the literature. In the context of scarring effects, \citet*{KJL2020} show that major shocks can inflict long-term damage on productivity, suggesting that adverse historical outcomes may have persistent effects on future performance. Moreover, as documented by \citet*{K2021}, although corporate profits recovered after the financial crisis, dividend growth lagged significantly, which is consistent with dividend smoothing theory and dividend signaling theory; see \citet*{Lintner1956}. In addition, \citet*{DO2008} provide evidence that firms with poorer historical performance tend to adopt more conservative payout policies. These findings justify incorporating a path-dependent process such as the running minimum.


In this context, literature such as \citet*{PG1998} and \citet*{GR2014} examine a continuous running maximum process but do not impose control on it. \citet*{FR2025} categorize integrals concerning singular controls and the running minimum into three types, constructing related payoff functionals to formally establish their connection. This work can be viewed as an extension of \citet*{ZH1992}, which first defined such integrals with respect to singular controls. Using these novel definitions, \citet*{DJ2025} study time-consistent optimization problems involving a tipping point, where the running minimum serves as a key auxiliary process.

Our paper adopts the framework of \citet*{LLY2024} and the novel definitions of integrals in \citet*{FR2025} to investigate a non-exponentially discounted, time-inconsistent singular control problem featuring a running minimum process. In contrast to studies on the running minimum (\citet*{FR2025} and \citet*{DJ2025}), our work is developed within a novel equilibrium framework. Compared with existing equilibrium frameworks (\citet*{LLY2024}, \citet*{dai2024}, and \citet*{CZ2025}), we overcome their limitations, such as excessively stringent regularity conditions.

We define the equilibrium control using an SDE and a Skorokhod reflection problem incorporating a running minimum process and a singular control. We establish the existence and uniqueness of a strong solution for the SDE and the reflection problem using the Banach fixed-point theorem and the iteration method; see Propositions \ref{prop:solution1} and \ref{prop:Skorokhod}. Based on these results, we construct the time-inconsistent singular control model in which the payoff functional and the reflection region are associated with the running minimum.

We then derive the verification theorem (Theorem \ref{th:verify}) using an extended change-of-variable formula. The running minimum process introduces a Neumann condition in the associated partial differential equation (abbr. PDE). In Proposition \ref{prop:f^s}, we provide concrete, verifiable sufficient conditions for the integrability requirements, thereby relaxing the $O(h)$-conditions from earlier literature. Theorem \ref{th:nec} further establishes necessary conditions for the extended HJB system, revealing that the variational inequality originates from the underlying second-order smooth condition.

We apply our verification theorem to a time-inconsistent dividend problem. We focus on time-independent equilibria and the payoff functional is shown to satisfy a system of free-boundary PDEs. By seeking separable solutions, the problem is reduced to a differential-algebraic equation (abbr. DAE), and further to a nonlinear ordinary differential equation (abbr. ODE) system. Under certain parameter conditions, we prove the existence and uniqueness of a solution to this ODE, thereby establishing the corresponding equilibrium (Proposition \ref{prop:spec}). Furthermore, the dividend barrier is proved to be monotonically decreasing and locally concave with respect to the historical wealth minimum (Proposition \ref{prop:b}). Numerical simulations confirm the prevalence of such equilibria. A comparative static analysis reveals how the dividend barrier varies with key parameters.

The main contributions of this paper are fourfold. First, to the best of our knowledge, this paper is among the first to incorporate the running minimum process into a time-inconsistent singular control framework. Employing an iterative method, we establish the existence and uniqueness of the strong solution to a class of Skorokhod reflection problems involving the running minimum and provide a sufficient characterization for admissible singular control laws.
Second, we propose a verification theorem characterizing the equilibrium under much weaker regularity conditions and more verifiable integrability conditions compared with the previous literature, such as \citet*{LLY2024} and \citet*{dai2024}. Moreover, expanding the set of feasible perturbations, we achieve a stronger notion of equilibrium. Third, we demonstrate the existence of equilibrium through a dividend problem, which is substantially more complex than models without the running minimum, where the equilibrium functional is solely determined by the solution of an ODE. The complexity stems mainly from the strong coupling of the algebraic‑differential equations and the high nonlinearity of the system. We prove the monotonicity and local concavity of the dividend boundary. These findings align closely with dividend smoothing theory, dividend signaling theory, and the scarring effect in economic phenomena that previous models could not adequately explain.
Fourth, numerical simulations and comparative static analyses are carried out for this dividend problem, confirming the prevalence of equilibria across a wide range of parameter configurations. Combining path‑dependent dynamics with a subgame‑perfect Nash equilibrium framework, our model captures the impacts of both historical trajectories and future strategic interactions.

The remaining part of our paper is organized as follows:  Section \ref{sec:model} introduces a time-inconsistent singular control framework that incorporates a running minimum process and provides sufficient conditions for admissible controls. Section \ref{sec:fin} presents a verification theorem under weaker regularity conditions and establishes necessary conditions for the extended HJB system. The existence of an equilibrium is then demonstrated through a concrete example in Subsection \ref{subsec:rigor}. Finally, numerical simulations and a comparative static analysis in Subsection \ref{subsec:num} illustrate the equilibrium and its economic implications.
\section{Model Setting}\label{sec:model}
Let $(\Omega,\mathcal{F},\mathbb{P})$ be a complete probability space equipped with a one-dimensional standard Brownian motion $\{B_t\}_{t\geq 0}$. The filtration $\mathbb{F}:=\{\mathcal{F}_t\}_{t\geq 0}$ is the $\mathbb{P}$-augmentation of the natural filtration generated by $\{B_t\}_{t\geq 0}$ and satisfies the usual conditions.

The state process $\{X_t^{\mathrm{D}}\}_{t\geq0}$ and the running minimum process $\{M_t^{\mathrm{D}}\}_{t\geq0}$ evolve following the general stochastic differential equation,\begin{small}
\begin{align}\label{eq:X0}
\begin{cases}
\d X_{t}^{\mathrm{D}} = \mu(X_{t}^{\mathrm{D}},M_{t}^{\mathrm{D}},\mathrm{D}_{t},t) \d t + \sigma(X_{t}^{\mathrm{D}},M_{t}^{\mathrm{D}}, \mathrm{D}_{t},t) \d B_{t} - \d \mathrm{D}_{t}, & t \geq0, \\ M_t^{\mathrm{D}}=m \wedge \inf\limits_{0\leq s\leq t} X_s^{\mathrm{D}}, &t\geq 0,\\
X_{0-}^{\mathrm{D}} = x,\quad  M^{\mathrm{D}}_{0-}=m,\quad 
\mathrm{D}_{0-}=y,
\end{cases}
\end{align}\end{small}
where $\mu$ and $\sigma$ are deterministic functions, $x$, $m$ and $y$ are real numbers satisfying $x\geq m$ and $y\geq 0$, the process $\{X_t^{\mathrm{D}}\}_{t\geq 0}$ represents the wealth process of a company and the running minimum process $\{M_t^{\mathrm{D}}\}_{t\geq 0}$ records the historical minimum value of $\{X_t^{\mathrm{D}}\}_{t\geq 0}$. The singular control $\mathrm{D}=\{\mathrm{D}_t\}_{t\geq0}$ is nondecreasing and c\`{a}dl\`{a}g with initial $\mathrm{D}_{0-}=y$. In the dividend problem, it represents the cumulative dividends paid.

As $\{M_t^{\mathrm{D}}\}_{t\geq 0}$ records the company’s minimal wealth level, the admissible set is naturally defined as $\mathcal{Q} = \{(x, m, y, t) \in \mathbb{R} \times \mathbb{R} \times [0, \infty) \times [0, \infty) \mid x \geq m\}$. Let $a: \mathbb{R} \to \mathbb{R}$ be a continuous function. We assume that bankruptcy depends on the running minimum process (and hence on the wealth process) and introduce the liquidation region $\mathcal{R}_a = \{(x, m, y, t) \in \mathcal{Q} \mid x < a(m)\}$.
The action region is then defined as $\mathcal{Q}_a := \mathcal{Q} \setminus \mathcal{R}_a = \{(x, m, y, t) \in \mathcal{Q} \mid x \geq a(m)\}$, which is clearly a closed set.
The bankruptcy time is defined as\begin{small}
$$
\tau_{t,a}^{\mathrm{D}} := \inf_{s \ge t} \, \{ X_s^{\mathrm{D}} \le a(M_s^{\mathrm{D}}) \} = \inf_{s \ge t} \, \{ (X_s^{\mathrm{D}},M_s^{\mathrm{D}},\mathrm{D}_s,s) \in \overline{\mathcal{R}_a} \},
$$\end{small}
which is an $\{\mathcal{F}_s\}_{s \ge t}$-stopping time. Bankruptcy is triggered once the wealth process $\{X_s^{\mathrm{D}}\}_{s\geq t}$ reaches the liquidation region and the worst performance is refreshed. When the initial time and the control are clear from the context, we simply write $\tau_{t,a}^{\mathrm{D}}$ as $\tau_a$.

A large number of studies indicate that a company's past performance tends to influence its future value; see, e.g., \citet*{Lintner1956} and \citet*{DO2008}. Therefore, we consider a financial problem involving a cost functional related to the historical minimum return, which is characterized by the running minimum process. Let $\E_{x,m,y,t}[\cdot]$ denote the expectation conditioned on the event $X_{t-}^{\mathrm{D}}=x$, $M_{t-}^{\mathrm{D}}=m$, and $\mathrm{D}_{t-}=y$. The payoff functional is given by
\begin{small}
\begin{align}\label{eq:payoff}
\begin{array}{r@{\,}l}
J(x,m,y,t;\mathrm{D}) = & \! \E_{x,m,y,t}\!\left[\int_t^{\tau_{t,a}^\mathrm{D}}\beta(r-t)H(X_r^{\mathrm{D}},M_r^\mathrm{D},\mathrm{D}_r,r)\d r\right] \\[1.5pt]
& + \E_{x,m,y,t}\!\left[\int_t^{\tau_{t,a}^{\mathrm{D}}}\beta(r-t)c(X_r^{\mathrm{D}},M_r^{\mathrm{D}},\mathrm{D}_r,r)\diamond \d \mathrm{D}_r\right] \\[1.5pt]
& + \E_{x,m,y,t}\!\left[\int_t^{\tau_{t,a}^{\mathrm{D}}}\beta(r-t)h(M_r^{\mathrm{D}},\mathrm{D}_r,r)\square \d M_r\right],
\end{array}
\end{align}
\end{small}
where
\begin{small}
\begin{align*}
&\int_{t}^{T} \beta(r-t) \, g\big( X_{r}^{\mathrm{D}}, M_{r}^{\mathrm{D}}, \mathrm{D}_r,r \big) \diamond \mathrm{d}\mathrm{D}_{r} := \int_{t}^{T} \beta(r-t) \, g\big( X_{r}^{\mathrm{D}}, M_{r}^{\mathrm{D}},\mathrm{D}_r,r \big) \, \mathrm{d}\mathrm{D}_{r}^{c} \\[-0.5ex]
& \quad + \sum_{\substack{t\leq r \leq T \\ \Delta\mathrm{D}_{r} \neq 0}} \beta(r-t) \int_{0}^{(X_{r-}^{\mathrm{D}} - M_{r-}^{\mathrm{D}}) \wedge \Delta\mathrm{D}_{r}} g\big( X_{r-}^{\mathrm{D}} - u, M_{r-}^{\mathrm{D}} ,\mathrm{D}_{r-}+u,r\big) \, \mathrm{d}u \\[-0.5ex]
& \quad + \sum_{\substack{t\leq r \leq T \\ \Delta\mathrm{D}_{r} \neq 0}} \beta(r-t) \int_{(X_{r-}^{\mathrm{D}} - M_{r-}^{\mathrm{D}})\wedge \Delta\mathrm{D}_r}^{\Delta\mathrm{D}_{r}}  g\big( X_{r-}^{\mathrm{D}} - u, X_{r-}^{\mathrm{D}} - u,\mathrm{D}_{r-}+u,r \big) \, \mathrm{d}u,
\end{align*}
\end{small}
and
\begin{small}
 \begin{align*}
&\int_{t}^{T} \beta(r-t) \, g\big( M_{r}^{\mathrm{D}}, \mathrm{D}_r,r \big) \square \mathrm{d}M_{r}:= \int_{t}^{T} \beta(r-t) \, g\big(  M_{r}^{\mathrm{D}},\mathrm{D}_r,r \big) \, \mathrm{d}M_{r}^{c} \\ 
&- \sum_{\substack{t\leq r \leq T \\ \Delta\mathrm{D}_{r} \neq 0}} \beta(r-t) \int_{(X_{r-}^{\mathrm{D}} - M_{r-}^{\mathrm{D}})\wedge \Delta\mathrm{D}_r}^{\Delta\mathrm{D}_{r}}  g\big( X_{r-}^{\mathrm{D}} - u,\mathrm{D}_{r-}+u,r \big) \, \mathrm{d}u
\end{align*}   
\end{small}
for any $T\in[0,\infty]$ and $t<T$, see, e.g.,  \citet*{FR2025}. In addition, we define
\begin{small}
\begin{align}\label{eq:leftint}
\int_{t}^{T-} \beta(r-t) \, g\big( X_{r}^{\mathrm{D}}, M_{r}^{\mathrm{D}}, \mathrm{D}_r,r \big) \diamond \mathrm{d}\mathrm{D}_{r}=\lim_{t'\to T-}\int_{t}^{t'} \beta(r-t) \, g\big( X_{r}^{\mathrm{D}}, M_{r}^{\mathrm{D}}, \mathrm{D}_r,r \big) \diamond \mathrm{d}\mathrm{D}_{r},
\end{align}
\end{small}
and 
$\int_{t}^{T-} \beta(r-t) \, g\big( M_{r}^{\mathrm{D}}, \mathrm{D}_r,r \big) \square \mathrm{d}{M}_{r}$ can be similarly defined. This form of payoff functional encompasses a wide range of singular control problems arising in applications, such as optimal dividends and irreversible investment; see, e.g., \citet*{KL2001} and \citet*{CGCY2021}.
\begin{remark}
These integrals are defined in a ``path averaging" manner, implying that the benefits from a singular control action at a given time are realized smoothly. If we impose the conditions $g(x,m,y,t)=g(x-u,m,y+u,t)$  for  $0\leq u \leq x-m$, and $g(x,m,y,t)=g(x-u,x-u,y+u,t)$ for $u>x-m$, the definitions coincide with those in \citet*{LLY2024}. These conditions reflect that no additional gain can be achieved by splitting a control action into multiple operations within an infinitesimal time interval, compared to a single operation of the same total magnitude.
\end{remark}
We now state the standing assumptions on the wealth process and the payoff functional, which are maintained throughout the paper.
\begin{assumption}\label{as:allpaper}
We assume the following: \\
(a) (The linear growth condition.) For any $x,m\in \R$ satisfying $m\leq x$ and $y,t\in[0,\infty)$, we have
$$|\mu(x,m,y,t)|+|\sigma(x,m,y,t)|\leq K(1+|x|+|m|+y+t).$$
(b) (The Lipschitz condition.) There exists $L>0$ such that for $\varphi=\mu,\sigma$ and for any $x_1,x_2,m_1,m_2\in \R$ satisfying $m_1\leq x_1$ and $m_2\leq x_2$, $y_1,y_2\in[0,\infty)$ and $t\in [0,\infty)$, we have
$$|\varphi(x_1,m_1,y_1,t)-\varphi(x_2,m_2,y_2,t)|\leq L(|x_1-x_2|+|m_1-m_2|+|y_1-y_2|).$$
(c) The discount function $\beta\in C^1[0,\infty)$ is a decreasing function satisfying $\beta(0)=1$.\\
(d) Functions $H$, $c$, and $h$ are continuous with respect to $(x,m,y,t)$.
\end{assumption}
We then give the definitions of admissible singular control, admissible singular control law, and equilibrium singular control law.
\begin{definition}\label{def:singular}
Given $(x,m,y,t)\in \mathcal{Q}$, we call $\{\mathrm{D}_r\}_{r\geq t}$ an admissible singular control if it is $\{\F_r\}_{r\geq t}$-adapted, nondecreasing, c\`{a}dl\`{a}g and satisfies the following (a)-(c):\\
(a) \quad $\mathrm{D}_{t-}=y$ and the stochastic equation\begin{small}
\begin{align}\label{eq:X}
\begin{cases}
\d X_{r}^{\mathrm{D}} = \mu(X_{r}^{\mathrm{D}}, M_r^{\mathrm{D}}, \mathrm{D}_{r},r) \d r + \sigma(X_{r}^{\mathrm{D}}, M_r^{\mathrm{D}}, \mathrm{D}_{r},r) \d B_{r} - \d \mathrm{D}_{r}, & r \geq t, \\M_r^{\mathrm{D}}=m \wedge \inf\limits_{t\leq s\leq r} X_s^{\mathrm{D}}, &r\geq t,\\
X_{t-}^{\mathrm{D}} = x,\quad M^{\mathrm{D}}_{t-}=m,
\end{cases}
\end{align}\end{small}
has a unique strong solution $(\{X_r^{\mathrm{D}}\}_{r\geq t} ,\{M_r^{\mathrm{D}}\}_{r\geq t})$.\\
(b) \quad The solution $(\{X_r^{\mathrm{D}}\}_{r\geq t} ,\{M_r^{\mathrm{D}}\}_{r\geq t})$ in (a) satisfies that if $(X_{r-}^{\mathrm{D}},M_{r-}^{\mathrm{D}},\mathrm{D}_{r-},r)\in \mathcal{Q}_a, \forall t\leq r\leq T$, then $(X_{T}^{\mathrm{D}},M_{T}^{\mathrm{D}},\mathrm{D}_{T},T)\in \mathcal{Q}_a$.\\
(c)\quad For any $t\geq s$, Eq. (\ref{eq:payoff}) is well-defined, namely, the results of three integral terms will not have both $\infty$ and $-\infty$.
\\
Denote by $\mathcal{D}_t$ the set of all admissible singular controls on $[t,\infty)$.
\end{definition}
If $(x,m,y,t)\in \mathcal{Q}\backslash \mathcal{Q}_a$, we have $J(x,m,y,t;\mathrm{D})=0$ for any $\mathrm{D}\in \mathcal{D}_t$ by the definition of $\tau_a^{\mathrm{D}}$ and Eq. (\ref{eq:payoff}). In the dividend problem, this means that, once the company goes bankrupt, it ceases operations and stops paying dividends. We focus on the control within the action region according to Condition (b).
\begin{proposition}\label{prop:solution1}
For any $\{\F_r\}_{r\geq t}$-adapted, nondecreasing, and c\`{a}dl\`{a}g process $\mathrm{D}=\{\mathrm{D}_r\}_{r\geq t}$ that satisfies\footnote{This moment condition can be omitted if we have $|\varphi(x,m,y,t)|\leq K(1+|x|+|m|+t)$ for $\varphi=\mu$ and $\sigma$.} \begin{small}$\E_{x,m,y,t}[(\mathrm{D}_{t+r}-\mathrm{D}_{t-})^2]<\infty$\end{small} for any $r>0$, the SDE  (\ref{eq:X}) has a unique strong solution $(X^{\mathrm{D}},M^{\mathrm{D}})$.
\end{proposition}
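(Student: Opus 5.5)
The plan is to treat $\mathrm{D}$ as a fixed exogenous input and obtain $(X^{\mathrm{D}},M^{\mathrm{D}})$ as the fixed point of a contraction on a space of adapted càdlàg processes. Fix a horizon $T>t$ and let $\mathcal{S}_T$ be the space of $\{\F_r\}$-adapted càdlàg processes $Y$ on $[t,T]$ with $\|Y\|_T^2:=\E_{x,m,y,t}[\sup_{t\le r\le T}|Y_r|^2]<\infty$. For $Y\in\mathcal{S}_T$ set $M^Y_r:=m\wedge\inf_{t\le s\le r}Y_s$, and define
\begin{align*}
\Phi(Y)_r := x+\int_t^r \mu(Y_s,M^Y_s,\mathrm{D}_s,s)\,\d s+\int_t^r\sigma(Y_s,M^Y_s,\mathrm{D}_s,s)\,\d B_s-(\mathrm{D}_r-\mathrm{D}_{t-}),\qquad r\in[t,T].
\end{align*}
A fixed point of $\Phi$, together with $M^Y$, is exactly a strong solution of (\ref{eq:X}) on $[t,T]$. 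One subtlety must be handled first: Assumption \ref{as:allpaper} gives the bounds only on $\{m\le x\}$, and $M^Y_r\le Y_r$ holds automatically whenever $r$ lies in the running infimum. So the pair $(Y_s,M^Y_s)$ always lies in the domain $\{m\le x\}$ (using $x\ge m$ at time $t-$), and no extension of $\mu,\sigma$ is needed.

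\emph{Step 1 (well-definedness).} By the linear growth condition (a), $|M^Y_s|\le |m|+\sup_{u\le s}|Y_u|$, the BDG inequality, and the hypothesis $\E[(\mathrm{D}_{t+r}-\mathrm{D}_{t-})^2]<\infty$, we get $\Phi(Y)\in\mathcal{S}_T$. Since $\mathrm{D}_s\le y+(\mathrm{D}_T-\mathrm{D}_{t-})$ is square integrable, the $y$-term in the growth bound is harmless; this is the only place the moment condition is used, which matches the footnote. Adaptedness and càdlàg paths are inherited from $\mathrm{D}$.

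\emph{Step 2 (contraction).} The key elementary estimate is that the running-infimum map is $1$-Lipschitz in sup norm:
\begin{align*}
\sup_{t\le s\le r}|M^{Y}_s-M^{Y'}_s|\le\sup_{t\le s\le r}|Y_s-Y'_s|.
\end{align*}
This holds because both $\inf$ and $m\wedge\cdot$ are $1$-Lipschitz. The control enters both $\Phi(Y)$ and $\Phi(Y')$ identically, so it cancels in $\Phi(Y)-\Phi(Y')$, and the $y$-argument of $\mu,\sigma$ is the same $\mathrm{D}_s$ for both. The Lipschitz condition (b) together with Cauchy–Schwarz and Doob/BDG then gives
\begin{align*}
\E\Big[\sup_{t\le s\le r}|\Phi(Y)_s-\Phi(Y')_s|^2\Big]\le C_T\int_t^r \E\Big[\sup_{t\le u\le s}|Y_u-Y'_u|^2\Big]\d s,
\end{align*}
with $C_T=8L^2(T-t+4)$ or similar. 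To conclude, either iterate to obtain $\|\Phi^n(Y)-\Phi^n(Y')\|_T^2\le \frac{(C_T(T-t))^n}{n!}\|Y-Y'\|_T^2$, so that some power of $\Phi$ is a contraction, or use an exponentially weighted norm $\E[\sup_{s}e^{-\lambda s}|\cdot|^2]$ with $\lambda$ large. Banach's fixed point theorem then yields a unique fixed point in $\mathcal{S}_T$.

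\emph{Step 3 (uniqueness and pasting).} Uniqueness among all strong solutions, not only those in $\mathcal{S}_T$, would follow by localizing with $\rho_n=\inf\{s:|X_s|\vee|X'_s|\ge n\}$ and applying Gronwall to the stopped difference, since the estimate in Step 2 holds up to $\rho_n$. Finally, the solutions on $[t,T]$ for $T=t+1,t+2,\dots$ are consistent by uniqueness, so they paste into a global solution on $[t,\infty)$.

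I expect the main obstacle to be Step 2. The path-dependent coefficient through $M^Y$ forces one to work with the running supremum norm rather than a pointwise $L^2$ norm, and both the infimum Lipschitz estimate and the domain issue $\{m\le x\}$ have to be checked carefully, including at jump times of $\mathrm{D}$ where $M$ can jump down. The jumps themselves do not spoil the estimate, because $\mathrm{D}$ cancels in the difference.
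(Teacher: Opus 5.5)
Your proposal is correct and is essentially the paper's argument: a Banach fixed-point theorem in $\mathcal{S}^2$ for the map $Y\mapsto x-(\mathrm{D}-\mathrm{D}_{t-})+\int\mu\,\d s+\int\sigma\,\d B$, built on the $1$-Lipschitz property of the running minimum together with Cauchy--Schwarz and BDG. The differences are minor. The paper obtains a contraction by shrinking the time interval and concatenating, whereas you use the $n!$ iteration on a fixed horizon. Your localization step also gives uniqueness among all strong solutions, which the paper does not address.

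Two small corrections. First, the moment condition is also needed simply to have $\Phi(Y)\in\mathcal{S}_T$, because of the $-(\mathrm{D}_r-\mathrm{D}_{t-})$ term, so it is not used only through the $y$-argument of $\mu,\sigma$. Second, if you want a weighted norm, use $\sup_{r}e^{-\lambda r}\E[\sup_{u\le r}|\cdot|^2]$, which is what your integral inequality controls directly, rather than placing the weight inside the expectation.
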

\begin{proof}
See Appendix \ref{app:1}.
\end{proof}
If $H$, $c$, and $-h$ are nonnegative, then any process $\mathrm{D}$ satisfying conditions (a) and (b) of Definition \ref{def:singular} is admissible.
\begin{definition}\label{def:sing2}
Suppose that $\Xi:=(W^{\Xi},P^{\Xi})$ is a partition of $\mathcal{Q}$, $W^{\Xi}$ is a relatively open subset of $\mathcal{Q}$ denoting the waiting region, and $P^{\Xi}$ is a closed set denoting the purchasing region. We call $\Xi$ an admissible singular control law if the following conditions hold:\\
(a) \quad Given any initial $(x,m,y,t)\in \mathcal{Q}$, the Skorokhod reflection problem\begin{small}
\begin{align}\label{eq:Skorokhod}
\begin{cases}
\d X_{r}^{\mathrm{D}} = \mu(X_{r}^{\mathrm{D}}, M_r^{\mathrm{D}}, \mathrm{D}_{r},r) \, \d r + \sigma(X_{r}^{\mathrm{D}}, M_r^{\mathrm{D}}, \mathrm{D}_{r},r) \, \d B_{r} - \d\mathrm{D}_{r}, & r \geq t, \\[6pt]
(X_{r}^{\mathrm{D}}, M_r^{\mathrm{D}}, \mathrm{D}_{r},r) \in \overline{W^{\mathrm{D}}}, & r \geq t, \\[6pt]
\mathrm{D}_{r} = y + \int_{t}^{r} 1_{\{(X_{u}^{\mathrm{D}}, M_u^{\mathrm{D}}, \mathrm{D}_{u},u) \in P^{\mathrm{D}}\}} \, \d\mathrm{D}_{u}, & r \geq t, \\[6pt]
X_{t-}^{\mathrm{D}} = x,\quad M_{t-}^{\mathrm{D}}=m, \quad \mathrm{D}_{t-} = y &
\end{cases}
\end{align}\end{small}
has a unique strong solution $(X^{\mathrm{D}},M^{\mathrm{D}},\mathrm{D})=(X^{x,m,y,t,\mathrm{\Xi}},M^{{x,m,y,t,\mathrm{\Xi}}},\mathrm{D}^{x,m,y,t,\mathrm{\Xi}})$. We call $\{\mathrm{D}_r\}_{r\geq t}$ the singular control generated by $\Xi$ at $(x,m,y,t)$.\\
(b)\quad For any $s\leq t$, the processes $\{\mathrm{D}_{r}\}_{r\geq t}$,$ \{X_r^{\mathrm{D}}\}_{r\geq t}$ and $\{M_r^{\mathrm{D}}\}_{r\geq t}$ satisfy 
\begin{small}
\begin{align*}
&\E_{x,m,y,t}\Bigg[\int_t^{\tau_a^\mathrm{D}}\left|\beta(r-s)H(X_r^\mathrm{D},M_r^\mathrm{D},\mathrm{D}_r,r)\right|\d r+\int_t^{\tau_a^\mathrm{D}}\left|\beta(r-s)c(X_r^\mathrm{D},M_r^\mathrm{D},\mathrm{D}_r,r)\right|\diamond \d \mathrm{D}_r\\&\quad -\int_t^{\tau_a^\mathrm{D}}\left|\beta(r-s)h(M_r^\mathrm{D},\mathrm{D}_r,r)\right|\square \d M_r^\mathrm{D} \Bigg]<\infty.
\end{align*}
\end{small}
(c)\quad The singular control $\{\mathrm{D}_r\}_{r\geq t}$ generated by $\Xi$ satisfies Condition (b) in Definition \ref{def:singular}.
\end{definition}
Figure \ref{fig:WP} visually demonstrates all the various regions defined above. For clarity, we present the case where the regions are independent of the singular control value $y$ and the time $t$.
\begin{figure}[!ht]
    \centering    \includegraphics[width=0.75\linewidth]{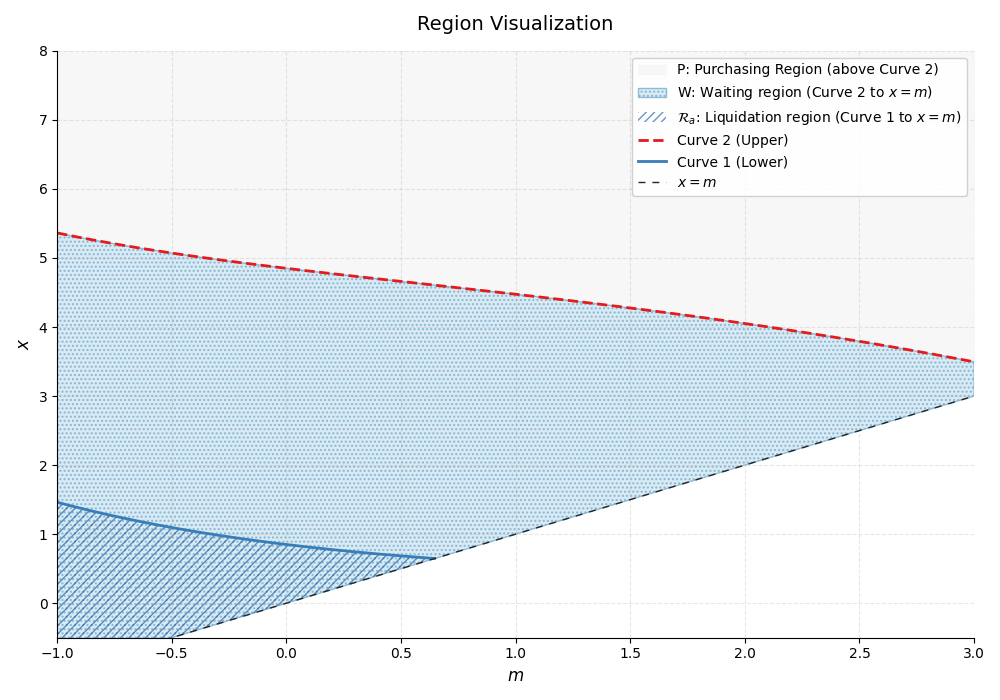}
    \caption{Region Visualization. The admissible set $\mathcal{Q}$ represents the region above $x=m$, and the action region $\mathcal{Q}_a$ represents the region above Curve 1.}
    \label{fig:WP}
\end{figure}
We next prove the existence and uniqueness of the solution to a class of Skorokhod reflection problems related to the running minimum process, where the boundary of $W$ is Lipschitz continuous. Following a possible jump at time $t$, the singular control $\mathrm{D}$ evolves in the closure of the waiting region. Hence, we only consider the cases where the initial condition $(X_{0-}, M_{0-},\mathrm{D}_{0-}) = (x_0, m_0,y_0) \in \overline{W}$.
\begin{proposition}\label{prop:Skorokhod}
Assume that the waiting region $ W$ is expressed in the form of $W=\{(x,m,y,t)| m\leq x<b(m,y,t),t\geq 0\}$, where the function $b$ is continuous and there exist $L_m,L_d\in(0,1)$ satisfying $L_m+L_d<1$ such that $b$ is $L_m$-Lipschitz with respect to the first component $m$ and is $L_d$-Lipschitz with respect to the second component $y$.
Then, \\
(a) The Skorokhod reflection problem\begin{small}
\begin{align}\label{eq:Skorokhod2}
\begin{cases}
\d X_{t}^{\mathrm{D}} = \mu(X_{t}^{\mathrm{D}}, M_t^{\mathrm{D}},\mathrm{D}_t,t) \, \d t + \sigma(X_{t}^{\mathrm{D}}, M_t^{\mathrm{D}},\mathrm{D}_t,t) \, \d B_{t} - \d\mathrm{D}_{t}, & t \geq 0, \\[6pt]
(X_{t}^{\mathrm{D}}, M_t^{\mathrm{D}},\mathrm{D}_t,t) \in \overline{W}, & t \geq 0, \\[6pt]
\mathrm{D}_{t} =  \int_{0}^{t} 1_{\{(X_{u}^{\mathrm{D}}, M_u^{\mathrm{D}},\mathrm{D}_u,u) \in P\}} \, \d\mathrm{D}_{u}, & t \geq 0, \\[6pt]
X^{\mathrm{D}}_{0-}=x_0,\quad M^{\mathrm{D}}_{0-}=m_0,\quad \mathrm{D}_{0-}=y_0&
\end{cases}
\end{align}\end{small}
with initial condition $(X_{0-}, M_{0-},\mathrm{D}_{0-}) = (x_0, m_0,y_0)  \in \overline{W}$, admits a unique strong solution $\left(\{X_t\}_{t\geq 0},\{ M_t\}_{t\geq 0},\{ \mathrm{D}_t\}_{t\geq 0}\right)$.\\
(b) Moreover, we assume the following conditions:\\
(1) The liquidation region $\mathcal{R}$ is a subset of the waiting region $W$.\\
(2) The functions $H$, $h$, $c$, $\mu$, and $\sigma$ are bounded on $\mathcal{Q}_a$, and $b$ is Lipschitz continuous with respect to the third component $t$.\\
(3) There exist constants $C_1>0$ and $C_2>1$ such that $\beta(t)\leq C_1(1+t)^{-C_2}$ holds for any $t\geq 0$.\\
Then $\Xi=(W^\Xi,P^\Xi)$ is an admissible singular control law.
\end{proposition}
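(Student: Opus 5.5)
The proof of (a) rests on making the reflection explicit and then applying a Picard-type iteration, as was done for Proposition \ref{prop:solution1}. First treat the possible initial jump: since $(x_0,m_0,y_0)\in\overline W$, we have $x_0\le b(m_0,y_0,0)$ and no jump is needed at time $0$, so $X_0=x_0$, $M_0=m_0$, $\mathrm D_0=y_0$.

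Next, freeze an input pair of adapted processes $(\hat X,\hat M,\hat{\mathrm D})$ in the coefficients. Put $Y_t:=x_0+\int_0^t\mu(\hat X_s,\hat M_s,\hat{\mathrm D}_s,s)\,\d s+\int_0^t\sigma(\cdots)\,\d B_s$, a continuous semimartingale. We must then solve the deterministic pathwise problem of finding a continuous nondecreasing $\mathrm D$, increasing only when $X=b(M,\mathrm D,t)$, such that $X=Y-\mathrm D+y_0$, $M_t=m_0\wedge\inf_{s\le t}X_s$ and $X_t\le b(M_t,\mathrm D_t,t)$. The reflected path is the smallest admissible $\mathrm D$. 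It solves the implicit equation $\mathrm D_t=y_0\vee\sup_{s\le t}\phi_s(\mathrm D_s)$, where $\phi_s(d)$ is the unique $d'$ solving $Y_s-d'=b(M_s,d',s)$. The hypothesis $L_m+L_d<1$ enters here. Because $M\le X$ and $M$ moves only when $X=M$, a perturbation $\delta$ of $\mathrm D$ shifts $X$ by $\delta$, shifts $M$ by at most $\delta$, and shifts $b$ by at most $(L_m+L_d)\delta$. Consequently the map $d\mapsto Y-d-b(\cdot)$ is strictly decreasing with slope at most $-(1-L_m-L_d)$. This yields a pathwise contraction, in sup-norm on $[0,T]$, of the operator sending $\mathrm D$ to the minimal reflecting control. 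It also yields the Lipschitz estimate
\[
\sup_{s\le t}|\mathrm D^1_s-\mathrm D^2_s|+\sup_{s\le t}|X^1_s-X^2_s|+\sup_{s\le t}|M^1_s-M^2_s|\le C\sup_{s\le t}|Y^1_s-Y^2_s|,
\]
where $C$ depends only on $(1-L_m-L_d)^{-1}$.

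With the Lipschitz map $\Gamma:Y\mapsto(X,M,\mathrm D)$ in hand, we iterate $(X^{n+1},M^{n+1},\mathrm D^{n+1})=\Gamma(Y^n)$, where $Y^n$ is built from the $n$-th iterate. Combining the Burkholder--Davis--Gundy inequality with the Lipschitz property of $\mu,\sigma$ (Assumption \ref{as:allpaper}(b)) gives
\[
\E\sup_{s\le t}|Z^{n+1}_s-Z^n_s|^2\le K_T\int_0^t\E\sup_{u\le s}|Z^n_u-Z^{n-1}_u|^2\,\d s .
\]
The usual factorial bound then gives a Cauchy sequence in $S^2[0,T]$. The limit solves \eqref{eq:Skorokhod2}, and a Gronwall argument on the same inequality gives uniqueness. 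Finiteness of moments of $\mathrm D$ follows from the linear growth assumption together with the same estimate. Adaptedness is preserved at each step because $\Gamma$ is nonanticipative.

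For (b) we verify Definition \ref{def:sing2}. Condition (a) is exactly part (a), applied from any initial point after the initial jump onto $\overline W$; that jump has size $x-b(\cdot)$ determined by the same contraction argument. Condition (c) follows because $\mathcal R\subset W$: reflection acts only on the upper boundary $x=b$, so it cannot push the state from $\mathcal Q_a$ into the liquidation region except through the continuous diffusion path. The main obstacle is the integrability condition (b). Boundedness of $H,c,h$ on $\mathcal Q_a$ handles the $\d r$ term through $\int_0^\infty\beta<\infty$, which holds since $C_2>1$. For the $\diamond\,\d\mathrm D$ and $\square\,\d M$ terms, we integrate by parts to obtain $\int\beta(r-s)\,\d\mathrm D_r=-\int\beta'(r-s)\mathrm D_r\,\d r+\text{boundary terms}$, so it suffices to bound $\E[\mathrm D_r-y]$ and $\E[m-M_r]$ by $C(1+r)$. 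We plan to obtain this from the Skorokhod representation. We have $\mathrm D_r\le\sup_{u\le r}(Y_u-b(M_u,\mathrm D_u,u))^+ +y$, and the Lipschitz property of $b$ in $m,y,t$ together with bounded $\mu,\sigma$ gives $\E\mathrm D_r\le C(1+r)$, after absorbing the $L_d\mathrm D$ term using $L_d<1$. The quantity $m-M_r$ is controlled by $\sup_u|Y_u|+\mathrm D_r$. Since $\beta$ is decreasing and $\beta(t)\le C_1(1+t)^{-C_2}$ with $C_2>1$, an integration-by-parts or Abel summation shows that $\E\int\beta\,\d\mathrm D$ is finite whenever $\E\mathrm D_r$ grows linearly. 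We expect the delicate point to be verifying that $(1+r)^{-C_2}\cdot r$ is integrable after integration by parts, without any derivative bound on $\beta$; that is, avoiding reliance on $\beta'$. We would handle this by summing over unit intervals $[k,k+1]$, bounding $\beta$ on each interval by $C_1(1+k)^{-C_2}$ and $\E(\mathrm D_{k+1}-\mathrm D_k)$ by a constant.
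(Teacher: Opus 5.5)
Your proposal is correct, and for part (a) it takes a genuinely different route from the paper.

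\textbf{Paper's route for (a).} The paper argues in two nested layers:
\begin{enumerate}
\item For a frozen increasing path $l$ in the $y$-slot of $b$, it builds a pathwise map $\Gamma_l$. This map is a contraction in $w$ with constant $L_m$ and is Lipschitz in the input with constant $2/(1-L_m)$.
\item It runs a Picard iteration for the SDE with a control $\mathrm{L}$ frozen in both the coefficients and the barrier.
\item It then closes a second, probabilistic fixed point $\zeta:\mathrm{L}\mapsto\mathrm{D}$ on short time intervals. The hypothesis enters there as $(1-L_m)^2>L_d^2$.
\end{enumerate}

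\textbf{Your route for (a).} You absorb both feedbacks of the barrier, through $M$ and through $\mathrm{D}$, into one deterministic contraction $\mathrm{D}\mapsto y_0+\sup_{s\le\cdot}\bigl(Y_s-b(M^{\mathrm{D}}_s,\mathrm{D}_s,s)\bigr)^+$ with constant $L_m+L_d$. This gives a nonanticipative Skorokhod map $Y\mapsto(X,M,\mathrm{D})$ with Lipschitz constant of order $(1-L_m-L_d)^{-1}$. After that, a single standard Picard iteration, with $\mu,\sigma$ Lipschitz in all three state variables, and the factorial bound give existence and uniqueness on any $[0,T]$ without restarting.

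\textbf{What each buys.} Your version makes the role of $L_m+L_d<1$ transparent and dispenses with the second fixed point in $\mathrm{L}$. The same pathwise map also gives you the size of the initial jump and the increment bounds used in (b). The paper's version keeps the $y$-feedback at the probabilistic level. That is heavier, but it mirrors how the paper treats controls frozen in the coefficients.

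\textbf{Part (b).} You essentially follow the paper here, summing over unit intervals against $C_1(1+k)^{-C_2}$. You additionally check conditions (a) and (c) of Definition \ref{def:sing2}, including where the initial jump lands, which the paper leaves implicit.

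\textbf{Two small corrections.}
\begin{enumerate}
\item The claim that the reflected $\mathrm{D}$ is the smallest admissible control is not true in general when $b$ depends on $m$. For example, if $b$ is decreasing in $m$, as in Subsection \ref{subsec:rigor}, an early larger payment lowers $M$ and raises the barrier. You do not need the claim. Uniqueness follows because any solution satisfies the fixed-point equation above, by the classical Skorokhod lemma applied with the continuous barrier $s\mapsto b(M_s,\mathrm{D}_s,s)$.
\item In bounding $\E(\mathrm{D}_{k+1}-\mathrm{D}_k)$ or $\E\mathrm{D}_r$, the term $L_m(M_k-M_r)$ also contributes $L_m$ times the increment of $\mathrm{D}$. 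So the absorption needs $L_m+L_d<1$, not $L_d<1$ alone. With that fixed, either the per-interval bound or Abel summation finishes the argument without any use of $\beta'$. Abel summation only needs that $\beta$ is decreasing, integrable, and satisfies $r\beta(r)\to0$.
\end{enumerate}
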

\begin{proof}
See Appendix \ref{app:2}.
\end{proof}
\begin{remark}
We explain the condition $L_m + L_d < 1$. Consider a point $(x, m, y, t)$ on the boundary of $W$ satisfying $x = b(m, y, t)$. If $L_m > 1$, the waiting region $W$ may become multiply connected, which may result in a complicated and discontinuous situation for $\mathrm{D}$. If $L_d > 1$, the boundary moves faster than the control can respond, so that the process fails to track the boundary, leading to the non-existence of a strong solution. The condition $L_m + L_d < 1$ accounts for the joint influence of both the above factors.
It should be emphasized that the well-posedness for the fully general class of reflection problems is still open.
\end{remark}
Then we give the definition of equilibrium in our time-inconsistent singular control framework. 
\begin{definition}\label{def:equi}
Let $\hat{\Xi}$ be an admissible singular control law and $\mathrm{D}^{x,m,y,t,\hat{\Xi}}$ be the singular control generated by $\hat{\Xi}$. For any initial $(x,m,y,t)\in \mathcal{Q}_a$, we call $\hat{\Xi}$ an equilibrium singular control law and $J(x,m,y,t;\mathrm{D}^{x,m,y,t,\hat{\Xi}})$ the corresponding equilibrium value function if for any control $\mathrm{L}\in\mathcal{D}_t$ satisfying $\mathrm{L}_{t-}=y$ and $\mathrm{L}_{(t+h)-}-\mathrm{L}_t\leq Ch$ a.s. for some $C>0$, we have
\begin{align}\label{eq:equilibrium}
\lim \sup_{h\to 0+} \frac{J(x,m,y,t;\mathrm{D}^h)-J(x,m,y,t;\mathrm{D}^{x,m,y,t,\hat{\Xi}})}{h}\leq 0,
\end{align}
where $\mathrm{D}^h$ is defined by
\begin{align}\label{eq:D^h}
\mathrm{D}_r^h=
\begin{cases}
\mathrm{L}_r, & r \in [t, t+h), \\[6pt]
\mathrm{D}_r^{X_{(t+h)-}^{\mathrm{D}^h},M_{(t+h)-}^{\mathrm{D}^h},\ \mathrm{D}^h_{(t+h)-},t+h,\ \hat{\Xi}}, & r \in [t+h, \infty).
\end{cases}
\end{align}
We call the control generated by the equilibrium singular control law the equilibrium control.
\end{definition}
\begin{remark}\label{remark:equilibrium}
If the bankruptcy caused by the perturbation happens during $[t,t+h)$, the control after it has no impact on the payoff functional, whose increment after time $t$ is equal to $0$. The condition $\mathrm{L}_{(t+h)-}-\mathrm{L}_t\leq Ch$ a.s. is called the $O(h)$-condition. Although the $O(h)$-condition is relatively weak, it fails to capture the singular behavior of the control, such as the infinite initial slope exhibited by $\sqrt{t}$. We can extend Definition \ref{def:equi} to the cases without $O(h)$-condition to obtain a stronger equilibrium definition, provided some specific conditions hold; see Proposition \ref{prop:f^s}.
\end{remark}
\begin{remark}
Compared with \citet*{dai2024} and \citet*{CZ2025}, where the perturbed control form is directly determined as a function of $\omega\in \Omega$, our equilibrium determined by a Skorokhod reflection problem can be defined for the cases where the payoff functional is related to the control value and the running minimum. Interestingly, when both definitions are applicable, the corresponding system of HJB equations for the equilibrium is identical.
\end{remark}
Based on the above definitions, we provide a verification theorem and analyze the extended HJB equation system in the next section.
\section{The Extended HJB Equation System}\label{sec:fin}
In this section, we present the verification theorem, which serves as a sufficient condition for the existence of an equilibrium. We first introduce an important function sequence $\{f^s\}_{s\in [0,t]}$, which can be explained as the value of the payoff functional corresponding to the discounting starting point at a past time point in the time-inconsistent setting; see Eq. (\ref{eq:probpre}) below. In the following theorem, we mainly focus on its Conditions (1) and (3), while Condition (4) is standard and Conditions (2) and (5) are technical conditions for the Fatou lemma and the dominated convergence theorem (abbr. DCT) which can be replaced with more specific ones, for example, Conditions (1) and (2) in Proposition \ref{prop:f^s}. In the following, we abbreviate $\E_{x,m,y,t}[\cdot]$ to $\E[\cdot]$ when there is no risk of ambiguity.
\begin{theorem}[Verification Theorem]\label{th:verify}
Given a function $V:\mathcal{Q}\to \R$ and a family of functions $\{f^s\}_{s\in [0,t]}$, where $f^s:\mathcal{Q}\to \R$, define 
\begin{small}
\begin{align*}
&f(x,m,y,t,s) := f^s(x,m,y,t),   \\
&\mathcal{A} \varphi(x,m,y,t) := \varphi_t(x,m,y,t) + \mu(x,m,y,t)\varphi_x(x,m,y,t) \\
&\qquad + \frac12 \sigma^2(x,m,y,t)\varphi_{xx}(x,m,y,t),\quad \forall \varphi :\mathcal{Q} \to \R, \\
&W := \{(x,m,y,t)\in\mathcal{Q} \,|\, c(x,m,y,t)-V_x(x,m,y,t)+V_y(x,m,y,t)<0\},\\
&P := \{(x,m,y,t)\in\mathcal{Q} \,|\, c(x,m,y,t)-V_x(x,m,y,t)+V_y(x,m,y,t)=0\}.
\end{align*}
\end{small}
Assume that we have the following properties:\\
(1) The boundary $\partial W$ is represented as a continuous function in the form of $x=b(m,y,t)$. Moreover, for any $s\in [0,t]$, we have $V\in C^{2,1,1,1}(\overline{W})\cap C^{2,1,1,1}(\overline{P})\cap C^{1,1,0,0}(\mathcal{Q}_a)$, $f^s\in C^{2,1,1,1}(\overline{W})\cap C^{1,1,1,1}(\overline{P})\cap C^{1,1,0,0}(\mathcal{Q}_a)$, and $f$ is $C^1$ with respect to the fifth component $s$ in $\mathcal{Q}_a$.\\
(2) For any $\mathrm{D}^h\in \mathcal{D}_t$,
there exists a constant $\bar{h}>0$ such that for any $0<h_0<\bar{h}$, we have
\begin{small}
\begin{align}\label{eq:intcondition}
&\mathbb{E}_{x,m,y,t} \bigg[ \sup_{\substack{r \in [t, t+h_0), \\ u \in [0, \Delta\mathrm{D}_r]}} \Big|\varphi(X_{r-}-u, M_{r-}-u, \mathrm{D}_{r-}+u, r) \Big| \bigg] < \infty,
\end{align}
\end{small}
for $\varphi=\mathcal{A}V$, $V_y-V_x$ and $V_m$,
\begin{small}
\begin{align}\label{eq:intcondition2}
&\mathbb{E}_{x,m,y,t} \bigg[ \sup_{\substack{r \in [t, t+h_0), \\ u \in [0, \Delta\mathrm{D}_r]}} \Big| f_s(X_{(t+h_0)-}-u, M_{(t+h_0)-}-u, \mathrm{D}_{(t+h_0)-}+u, t+h_0, r) 
 \Big| \bigg] < \infty,
\end{align}
\end{small}
and for $\psi=H$ and $c$,
\begin{small}
\begin{align}\label{eq:intcondition3}
\mathbb{E}_{x,m,y,t} \bigg[ \sup_{\substack{r \in [t, t+h_0), \\ u \in [0, \Delta\mathrm{D}_r]}} \Big| &\beta(r-t)\psi(X_{r-}-u,M_{r-}-u,\mathrm{D}_{r-}+u,r)  \\[-0.8ex]
&+\beta(r-t)h(M_{r-}-u,\mathrm{D}_{r-}+u,r) \Big| \bigg] < \infty. \notag
\end{align}
\end{small}\\
(3) $V(x,m,y,t)$ and $f^s(x,m,y,t)$ satisfy
\begin{small}
\begin{align}
&\max \{\mathcal{A}V(x,m,y,t)+H(x,m,y,t)-\mathcal{A}f(x,m,y,t,t)+\mathcal{A}f^t(x,m,y,t), \label{eq:ver1}\\ &c(x,m,y,t)-V_x(x,m,y,t)+V_y(x,m,y,t)\}=0,\quad \forall(x,m,y,t)\in \mathcal{Q}_a\setminus \partial W,  s\in[0,t],\notag\\
&V_m(x,m,y,t)|_{x=m}=-h(m,y,t), \quad a(m)\leq m,\label{eq:ver2} \\
&V(a(m),m,y,t)=0,\quad a(m)>m, \label{eq:ver3}\\
&\mathcal{A}f^s(x,m,y,t)+\beta(t-s)H(x,m,y,t)=0,\quad\forall (x,m,y,t)\in \overline{W},\label{eq:ver4} \\
&\beta(t-s)c(x,m,y,t)-f^s_x(x,m,y,t)+f^s_y(x,m,y,t)=0,\quad \forall (x,m,y,t)\in P,\label{eq:ver5}\\
&f^s_m(x,m,y,t)|_{x=m}=-\beta(t-s)h(m,y,t),\quad a(m)\leq m,\label{eq:ver6}\\
&f^s(a(m),m,y,t)=0,\quad a(m)>m\label{eq:ver7}.
\end{align}
\end{small}\\
(4) $\hat{\Xi}=(W,P)$ is an admissible singular control law.\\
(5)  Denote by $\hat{\mathrm{D}}$ the singular control generated by $\hat{\Xi}$. For $\varphi= V,f^s$, and $\hat{\mathrm{D}}$, we have
\begin{small}
\begin{align}\label{eq:dominate}
\mathbb{E}_{x,m,y,t} \bigg[ \sup_{r \in [t, \tau_a^{\hat{\mathrm{D}}}]} \Big| \varphi(X_r^{\hat{\mathrm{D}}}, M_r^{\hat{\mathrm{D}}}, \hat{\mathrm{D}}_r, r) \Big| + \sup_{r \in [t, \tau_a^{\hat{\mathrm{D}}}]} \Big| f(X_r^{\hat{\mathrm{D}}}, M_r^{\hat{\mathrm{D}}}, \hat{\mathrm{D}}_r, r, r) \Big| \bigg] < \infty,
\end{align}
\end{small}
\begin{small}
\begin{align}\label{eq:dominate2}
\limsup_{T \to \infty} \mathbb{E}_{x,m,y,t} \Big[ \big( \big| \varphi(X_T^{\hat{\mathrm{D}}}, M_T^{\hat{\mathrm{D}}}, \hat{\mathrm{D}}_T, T) \big| + \big| f(X_T^{\hat{\mathrm{D}}}, M_T^{\hat{\mathrm{D}}}, \hat{\mathrm{D}}_T, T, T) \big| \big) \cdot \mathbf{1}_{\{T < \tau_a^{\hat{\mathrm{D}}}\}} \Big] = 0.
\end{align}
\end{small}
hold for any $t\geq s$.
Besides, for any $t\geq 0$ and $\mathrm{D}\in \mathcal{D}_t$, there exists $h_0>0$ such that 
\begin{small}
\begin{align}\label{eq:Dforstep3}
\int_t^{t+h_0}(\sigma(X_r^{\mathrm{D}},M_r^{\mathrm{D}},\mathrm{D}_r,r)V_x(X_r^{\mathrm{D}},M_r^{\mathrm{D}},\mathrm{D}_r,r))^2\d r <\infty.
\end{align}
\end{small}
Then $\hat{\Xi}=(W,P)$ is an equilibrium singular control law and $V$ is the corresponding equilibrium value function. Moreover, $f$ has the following probabilistic interpretation
\begin{small}
\begin{align}\label{eq:probpre}
f(x,m,y,t,s) &={}\mathbb{E}_{x,m,y,t}\left[\int_t^{\tau_a^{\hat{\mathrm{D}}}} \beta(r-s) H(X_r^{\hat{\mathrm{D}}}, M_r^{\hat{\mathrm{D}}}, \hat{\mathrm{D}}_r, r) \, \mathrm{d} r\right]  \\&\quad+ \mathbb{E}_{x,m,y,t}\left[\int_t^{\tau_a^{\hat{\mathrm{D}}}} \beta(r-s) c(X_r^{\hat{\mathrm{D}}}, M_r^{\hat{\mathrm{D}}}, \hat{\mathrm{D}}_r, r) \diamond \mathrm{d} \hat{\mathrm{D}}_r\right]\notag \\&\quad + \mathbb{E}_{x,m,y,t}\left[\int_t^{\tau_a^{\hat{\mathrm{D}}}} \beta(r-s) h(M_r^{\hat{\mathrm{D}}}, \hat{\mathrm{D}}_r, r) \square \mathrm{d} M_r^{\hat{\mathrm{D}}}\right].\notag
\end{align}
\end{small}
\end{theorem}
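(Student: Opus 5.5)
We follow the standard three-step scheme for time-inconsistent verification theorems, adapted to the running minimum and to singular controls. The main tool is an extended change-of-variable (It\^o) formula for functions of $(X,M,\mathrm{D},t)$ that are $C^{1,1,0,0}$ globally and piecewise $C^{2,1,1,1}$ on $\overline{W}$ and $\overline{P}$. In this formula the jumps of $\mathrm{D}$ are handled in the ``path averaging'' way of \citet*{FR2025}. Concretely, a jump $\Delta\mathrm{D}_r$ is split into the part $u\le X_{r-}-M_{r-}$, where $M$ is frozen, and the remaining part, where $M$ moves together with $X$. This splitting produces exactly the $\diamond$ and $\square$ integrals in (\ref{eq:payoff}). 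Because $M$ only decreases when $X=M$, its continuous part contributes $\varphi_m(M,M,\cdot)\,\d M^c$. Since $\partial W$ is the graph of a continuous $b$ and $\varphi\in C^{1,1,0,0}(\mathcal{Q}_a)$, there is no local-time term across $\partial W$; the $C^1$-fit is used precisely here.

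\emph{Step 1 (probabilistic representation).} Fix $s$ and apply the change-of-variable formula to $f^s(X^{\hat{\mathrm{D}}}_r,M^{\hat{\mathrm{D}}}_r,\hat{\mathrm{D}}_r,r)$ on $[t,\tau_a\wedge T]$, where $\hat{\mathrm{D}}$ is generated by $\hat\Xi$, which is admissible by (4). The process stays in $\overline{W}$, so (\ref{eq:ver4}) kills the $\d r$ term. By the Skorokhod condition, $\hat{\mathrm{D}}$ increases only on $P$, so (\ref{eq:ver5}) turns the $\d\hat{\mathrm{D}}$ terms into $-\beta(r-s)c\diamond\d\hat{\mathrm{D}}$. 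Next, (\ref{eq:ver6}) turns the $\d M$ terms into $-\beta h\square \d M$, and (\ref{eq:ver7}) gives $f^s=0$ at $\tau_a$. We localize the stochastic integral and then let $T\to\infty$, using (\ref{eq:dominate}), (\ref{eq:dominate2}) and dominated convergence. This yields (\ref{eq:probpre}). Next we show $V(x,m,y,t)=f(x,m,y,t,t)$, i.e.\ $V=J(\cdot;\hat{\mathrm{D}})$. In $W$ we use (\ref{eq:ver1}) with $c-V_x+V_y<0$, which forces $\mathcal{A}V+H-\mathcal{A}f(\cdot,t)+\mathcal{A}f^t=0$. Observing that $\mathcal{A}[f(\cdot,\cdot)]$ on the diagonal equals $\mathcal{A}f^t+f_s$, we deduce that $g(x,m,y,t):=f(x,m,y,t,t)$ satisfies the same equations as $V$: the same PDE in $W$, the same gradient condition on $P$, and the same Neumann and Dirichlet data by (\ref{eq:ver2})--(\ref{eq:ver3}) versus (\ref{eq:ver6})--(\ref{eq:ver7}) with $\beta(0)=1$. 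Applying It\^o to $V-g$ along $\hat{\mathrm{D}}$ then gives $V-g\equiv$ const $=0$, by the same limiting argument.

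\emph{Step 2 (perturbation).} Take $\mathrm{L}$ and $\mathrm{D}^h$ as in (\ref{eq:D^h}). By the Markov property at $t+h$ and Step 1,
\[
J(x,m,y,t;\mathrm{D}^h)=\E\Big[\int_t^{(t+h)\wedge\tau_a}\beta(r-t)\big(H\,\d r+c\diamond\d\mathrm{L}_r+h\square\d M_r\big)+f^t(X_{(t+h)-},M_{(t+h)-},\mathrm{L}_{(t+h)-},t+h)\Big].
\]
Write $f^t(\cdot,t+h)=g(\cdot,t+h)-[f(\cdot,t+h,t+h)-f(\cdot,t+h,t)]$. The bracket is $\int_t^{t+h}f_s\,\d r$, which is $O(h)$ and, by (\ref{eq:intcondition2}), contributes $\E[f_s]$-type terms that cancel against the $-\mathcal{A}f(\cdot,t)+\mathcal{A}f^t$ term in (\ref{eq:ver1}). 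Then apply It\^o to $V=g$ along $\mathrm{D}^h$ on $[t,t+h)$; (\ref{eq:Dforstep3}) ensures the martingale term has zero mean. The difference $J(\mathrm{D}^h)-V$ becomes the expectation of
\[
\int_t^{t+h}\big(\mathcal{A}V+\beta H-f_s\big)\d r+\int_t^{t+h}\big(\beta c-V_x+V_y\big)\diamond\d\mathrm{L}+\int_t^{t+h}(V_m+\beta h)\square\d M,
\]
up to the $\beta(r-t)-1=O(h)$ corrections. By (\ref{eq:ver1}) both bracketed integrands are $\le 0$ modulo the $\beta-1$ terms, and (\ref{eq:ver2}) cancels the $M$-term. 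Dividing by $h$ and taking $\limsup$, using continuity, the integrability (\ref{eq:intcondition})--(\ref{eq:intcondition3}) and Fatou/DCT, gives (\ref{eq:equilibrium}). If bankruptcy occurs in $[t,t+h)$, both sides stop, consistent with Remark \ref{remark:equilibrium}.

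\emph{Main obstacle.} We expect the main difficulty to be the jump terms in Step 2 (and an initial jump of $\hat{\mathrm{D}}$ at $t$ in Step 1). Under the $\diamond$ and $\square$ conventions, a jump crossing $x=m$ must be shown to produce exactly $\int(\beta c - V_x+V_y)(X_{r-}-u,\cdot)\,\d u$ along the two sub-paths, and these integrands must be controlled only through the sup-bounds (\ref{eq:intcondition})--(\ref{eq:intcondition3}). The $O(h)$ bound on $\mathrm{L}$ makes these jump contributions $O(h)$, which is what permits dividing by $h$. We would first prove the change-of-variable formula along a single jump by integrating $\frac{\d}{\d u}\varphi(X_{r-}-u,\ldots)$ piecewise; this is legitimate because $\varphi$ is $C^1$ in $(x,m)$ across $\partial W$.
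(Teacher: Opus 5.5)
Your proposal is correct and follows essentially the same route as the paper. The paper's proof likewise uses It\^o's formula along $\hat{\mathrm{D}}$ with localization and (\ref{eq:dominate})--(\ref{eq:dominate2}) to get (\ref{eq:probpre}), identifies $V=f(\cdot,t)$ via $\mathcal{A}f(\cdot,t,t)=\mathcal{A}f^t+f_s$ together with (\ref{eq:ver1}) and (\ref{eq:ver4}), and then proves the equilibrium inequality via the decomposition (\ref{eq:mideq}) with the $f_s$-integral, Peskir's change-of-variable formula (no local time thanks to the $C^1$ fit) and Fatou's lemma under (\ref{eq:intcondition})--(\ref{eq:intcondition3}).

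The differences are cosmetic: you apply It\^o to $V-f(\cdot,t)$ instead of comparing two representations, and you absorb the jump of $\mathrm{L}$ at time $t$ into the $\diamond$/$\square$ integrals rather than treating $\Delta\mathrm{D}^h_t>0$ separately. Note that this initial jump is not $O(h)$; it is harmless only because it contributes a nonpositive term by (\ref{eq:ver1}), (\ref{eq:ver2}) and $\beta(0)=1$.
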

\begin{proof}
\textbf{Step 1.} We first prove the probabilistic representation of $f^s$. Define the stopping time $T_n=\inf\{T\geq t: \int_t^T\sigma^2(X_r^{\hat{\mathrm{D}}},M_r^{\hat{\mathrm{D}}},\hat{\mathrm{D}}_r,r)[f_x(X_r^{\hat{\mathrm{D}}},M_r^{\hat{\mathrm{D}}},\hat{\mathrm{D}}_r,r)]^2\d r \geq n\},n\in \N^*,\p_{x,m,y,t}-a.s.$. Then \begin{small}$$\left\{\int_t^{\kappa\wedge T_n \wedge \tau_a}\sigma(X_r^{\hat{\mathrm{D}}},M_r^{\hat{\mathrm{D}}},\hat{\mathrm{D}}_r,r)f_x(X_r^{\hat{\mathrm{D}}},M_r^{\hat{\mathrm{D}}},\hat{\mathrm{D}}_r,r)\d B_r\right\}_{\kappa\in [t,\infty)}$$\end{small} is a martingale. Noting that $(X_r^{\hat{\mathrm{D}}},M_r^{\hat{\mathrm{D}}},\hat{\mathrm{D}}_r,r)\in \overline{W}, \quad \forall r\geq t$ a.s., we obtain:  for $t\geq s$,
\begin{footnotesize}
\begin{align}\label{eq:ver_f}
&\E_{x,m,y,t} \! \left[f^s(X_{\kappa\wedge T_n \wedge \tau_a}^{\hat{\mathrm{D}}},M_{\kappa\wedge T_n \wedge \tau_a}^{\hat{\mathrm{D}}},\hat{\mathrm{D}}_{\kappa\wedge T_n \wedge \tau_a},{\kappa\wedge T_n \wedge \tau_a})\right]\!-\!f^s(x,m,y,t)  \\[-1.2ex]
&=\E_{x,m,y,t} \Bigg[\int_t^{\kappa\wedge T_n \wedge \tau_a} \!\!\!\!\!\! \mathcal{A}f^s(X_r^{\hat{\mathrm{D}}},M_r^{\hat{\mathrm{D}}},\hat{\mathrm{D}}_r,r)\d r \!+\! \int_t^{\kappa\wedge T_n\wedge\tau_a} \!\!\!\!\!\! \sigma(X_r^{\hat{\mathrm{D}}},M_r^{\hat{\mathrm{D}}},\hat{\mathrm{D}}_r,r)f_x(X_r^{\hat{\mathrm{D}}},M_r^{\hat{\mathrm{D}}},\hat{\mathrm{D}}_r,r)\d B_r \notag \\[-1.2ex]
&\quad+\!\int_t^{\kappa\wedge T_n\wedge\tau_a} \!\!\!\!\!\! f^s_m(X_r^{\hat{\mathrm{D}}},M_r^{\hat{\mathrm{D}}},\hat{\mathrm{D}}_r,r)\d (M^{\hat{\mathrm{D}}})^c_r \!+\! \int_t^{\kappa\wedge T_n\wedge\tau_a} \!\!\!\!\!\! (f^s_y(X_r^{\hat{\mathrm{D}}},M_r^{\hat{\mathrm{D}}},\hat{\mathrm{D}}_r,r) \!-\! f_x^s(X_r^{\hat{\mathrm{D}}},M_r^{\hat{\mathrm{D}}},\hat{\mathrm{D}}_r,r))\d \hat{\mathrm{D}}^c_r \notag \\[-0.5ex]
&\quad +\!\! \sum_{\scriptscriptstyle t\leq r\leq \kappa\wedge T_n\wedge \tau_a} \!\!\! \big(f^s(X_{r\wedge\tau_a},M_{r\wedge\tau_a},\hat{\mathrm{D}}_{r\wedge\tau_a},r\wedge\tau_a) \!-\! f^s(X_{(r\wedge\tau_a)-},M_{(r\wedge\tau_a)-},\hat{\mathrm{D}}_{(r\wedge\tau_a)-},r\wedge\tau_a)\big) \Bigg],\notag
\end{align}
\end{footnotesize}
where we have used the It\^{o}-Tanaka-Meyer formula; see \citet*{KS1991}.
Let $\eta = r\wedge\tau_a$. By direct calculation, we have
\begin{small}
\begin{align}\label{eq:verf2}
&f^s(X^{\hat{\mathrm{D}}}_{\eta},M^{\hat{\mathrm{D}}}_{\eta},\hat{\mathrm{D}}_{\eta},\eta)-f^s(X^{\hat{\mathrm{D}}}_{\eta-},M^{\hat{\mathrm{D}}}_{\eta-},\hat{\mathrm{D}}_{\eta-},\eta) \\[-0.5ex]
&=\int_0^{X^{\hat{\mathrm{D}}}_{\eta-}-M^{\hat{\mathrm{D}}}_{\eta-}} \!\!\!\!\!\!(f_y^s-f^s_x)(X^{\hat{\mathrm{D}}}_{\eta-}-u,M^{\hat{\mathrm{D}}}_{\eta-},\hat{\mathrm{D}}_{\eta-}+u,\eta)\d u\notag \\[-0.5ex]
&\quad +\int_{X^{\hat{\mathrm{D}}}_{\eta-}-M^{\hat{\mathrm{D}}}_{\eta-}}^{X^{\hat{\mathrm{D}}}_{\eta-}-M^{\hat{\mathrm{D}}}_{\eta}} \!\!\!\!\!\!(f_y^s-f^s_x-f^s_m)(X^{\hat{\mathrm{D}}}_{\eta-}-u,X^{\hat{\mathrm{D}}}_{\eta-}-u,\hat{\mathrm{D}}_{\eta-}+u,\eta)\d u\notag \\[-0.5ex]
&=-\int_{\eta-}^{\eta}\!\!\!\beta(u-s) h(M_u^{\hat{\mathrm{D}}},\hat{\mathrm{D}}_u,u)\square \d M^{\hat{\mathrm{D}}}_u -\int_{\eta-}^{\eta}\!\!\!\beta(u-s) c(X_u^{\hat{\mathrm{D}}},M_u^{\hat{\mathrm{D}}},\hat{\mathrm{D}}_u,u)\diamond \d \hat{\mathrm{D}}_u,\notag
\end{align}
\end{small}
where we have just used the Newton-Leibniz formula, the definition of the integrals in Section \ref{sec:model}, and Eqs. (\ref{eq:ver4})-(\ref{eq:ver6}).
Then, combining Eqs. (\ref{eq:ver_f}) and (\ref{eq:verf2}) yields
\begin{small}
\begin{align}
&\mathbb{E}_{x,m,y,t} \big[f^s(X_{\kappa\wedge T_n \wedge \tau_a}^{\hat{\mathrm{D}}},M_{\kappa\wedge T_n \wedge \tau_a}^{\hat{\mathrm{D}}},\hat{\mathrm{D}}_{\kappa\wedge T_n \wedge \tau_a},{\kappa\wedge T_n \wedge \tau_a})\big]-f^s(x,m,y,t)  \\[-0.8ex]
&= -\mathbb{E}_{x,m,y,t} \Big[ \int_t^{\kappa\wedge T_n\wedge\tau_a} \!\!\!\!\!\! \beta(r-s) \big( H(X_r^{\hat{\mathrm{D}}},M_r^{\hat{\mathrm{D}}},\hat{\mathrm{D}}_r,r) \d r \notag \\[-1.2ex]
&\qquad\qquad\quad + h(M_r^{\hat{\mathrm{D}}},\hat{\mathrm{D}}_r,r) \square \d M_r^{\hat{\mathrm{D}}} + c(X_r^{\hat{\mathrm{D}}},M_r^{\hat{\mathrm{D}}},\hat{\mathrm{D}}_r,r) \diamond \d \hat{\mathrm{D}}_r \big) \Big].\notag
\end{align}
\end{small}
Taking the limit as $n\to \infty$, and using Eq. (\ref{eq:dominate}) and the DCT yield
\begin{small}
\begin{align*}
&f^s(x,m,y,t) = \mathbb{E}_{x,m,y,t} \big[f^s(X_{\kappa \wedge \tau_a}^{\hat{\mathrm{D}}},M_{\kappa \wedge \tau_a}^{\hat{\mathrm{D}}},\hat{\mathrm{D}}_{\kappa \wedge \tau_a},{\kappa \wedge \tau_a})\big] \\[-0.8ex]
&+\mathbb{E}_{x,m,y,t} \Bigg[ \int_t^{\kappa\wedge\tau_a} \!\!\!\!\!\! \beta(r-s) \Big( H(X_r^{\hat{\mathrm{D}}},M_r^{\hat{\mathrm{D}}},\hat{\mathrm{D}}_r,r) \d r + h(M_r^{\hat{\mathrm{D}}},\hat{\mathrm{D}}_r,r) \square \d M_r^{\hat{\mathrm{D}}} \\[-1.2ex]
&\hspace{10.5em} + c(X_r^{\hat{\mathrm{D}}},M_r^{\hat{\mathrm{D}}},\hat{\mathrm{D}}_r,r) \diamond \d \hat{\mathrm{D}}_r \Big) \Bigg].
\end{align*}
\end{small}
Then, letting $\kappa\to \infty$ and using Eq. (\ref{eq:dominate2}) together with Condition (b) in Definition \ref{def:sing2}, we obtain
\begin{small}
\begin{align}
f^s(x,m,y,t) &= \mathbb{E}_{x,m,y,t} \Bigg[ \int_t^{\tau_a} \!\!\! \beta(r-s) \Big( H(X_r^{\hat{\mathrm{D}}}, M_r^{\hat{\mathrm{D}}}, \hat{\mathrm{D}}_r, r) \d r \\[-1.0ex]
&\quad + h(M_r^{\hat{\mathrm{D}}}, \hat{\mathrm{D}}_r, r) \square \d M_r^{\hat{\mathrm{D}}}  + c(X_r^{\hat{\mathrm{D}}}, M_r^{\hat{\mathrm{D}}}, \hat{\mathrm{D}}_r, r) \diamond \d \hat{\mathrm{D}}_r \Big) \Bigg].\notag
\end{align}
\end{small}
Therefore, we obtain the probabilistic representation of $f$.\\
\textbf{Step 2.} We prove $V(x,m,y,t)=f(x,m,y,t,t)$. As $f$ is $C^1$ with respect to the fifth component $s$, imitating the approach in the proof of \textbf{Step 1} yields 
\begin{small}
\begin{align}\label{eq:ftt}
f(x,m,y,t,t) &=\E_{x,m,y,t}\Bigg[\int_t^{\tau_a}\mathcal{A}f(X^{\hat{\mathrm{D}}}_r,M^{\hat{\mathrm{D}}}_r,\hat{\mathrm{D}}_r,r,r)\d r+\int_t^{\tau_a}h(M_r^{\hat{\mathrm{D}}},\hat{\mathrm{D}}_r,r)\square\d M_r^{\hat{\mathrm{D}}}\\&\quad+\int_t^{\tau_a}c(X_r^{\hat{\mathrm{D}}},M_r^{\hat{\mathrm{D}}},\hat{\mathrm{D}}_r,r)\diamond \d \hat{\mathrm{D}}_r
 \Bigg],\notag
\end{align}
\end{small}
\begin{small}
\begin{align}\label{eq:Vt}
V(x,m,y,t)&=\E_{x,m,y,t}\Bigg[\int_t^{\tau_a}\mathcal{A}V(X^{\hat{\mathrm{D}}}_r,M^{\hat{\mathrm{D}}}_r,\hat{\mathrm{D}}_r,r)\d r+\int_t^{\tau_a}h(M_r^{\hat{\mathrm{D}}},\hat{\mathrm{D}}_r,r)\square\d M_r^{\hat{\mathrm{D}}}\\&\quad+\int_t^{\tau_a}c(X_r^{\hat{\mathrm{D}}},M_r^{\hat{\mathrm{D}}},\hat{\mathrm{D}}_r,r)\diamond \d \hat{\mathrm{D}}_r
 \Bigg].\notag
\end{align}
\end{small}
According to Eqs. (\ref{eq:ver1}) and (\ref{eq:ver4}), we have $\mathcal{A}f(x,m,y,t,t)=\mathcal{A}V(x,m,y,t)$ for any $(x,m,y,t)\in \overline{W}$. Combining this with Eqs. (\ref{eq:ftt}) and (\ref{eq:Vt}), we have \begin{small}
\begin{align}\label{eq:V=f}
V(x,m,y,t)=f(x,m,y,t,t)=J(x,m,y,t;\hat{\mathrm{D}}).
\end{align}\end{small}
Thus, $f^t \in C^{2,1,1,1}(P)$, which ensures Eq. (\ref{eq:ver1}) is well-defined on $P$.\\
\textbf{Step 3.} We prove the equilibrium. For any $\mathrm{L}\in \mathcal{D}_t$ with $\mathrm{L}_{t-}=y$, define\begin{small}
\begin{small}
\begin{align*}
f^{\mathrm{L}}(x,m,y,t,s) := \mathbb{E}_{x,m,y,t} \bigg[ \int_t^{\tau_a^{\mathrm{L}}}  \beta(r-s) \Big( H \d r + c \diamond \d \mathrm{L}_r + h \square \d M_r^{\mathrm{L}} \Big) (X_r^{\mathrm{L}}, M_r^{\mathrm{L}}, \mathrm{L}_r, r) \bigg].
\end{align*}
\end{small}\end{small}
To simplify the exposition, for any $\{\F_s\}_{s\geq t}$-stopping time $\tau$, we introduce a symbolic representation $(t+h)-\wedge \tau$ as follows: \\ We define $(t+h)-\wedge \tau:=(t+h)-$ if $\tau \geq t+h$ and $(t+h)-\wedge \tau:=\tau$ if $\tau < t+h$. The meaning of $(t+h)-$ is given by Eq. (\ref{eq:leftint}).
For the sake of simplicity, all the $(t+h)-$ below represent $(t+h)-\wedge \tau_a^{\mathrm{D}^h}$. 

If $\tau \leq t+h$, we have
$X_{\tau}^{\mathrm{L}}=a(M_\tau^{\mathrm{L}})$ according to Condition (b) in Definition \ref{def:singular}. Using Eqs. (\ref{eq:probpre}) and (\ref{eq:V=f}), we have
\begin{small}
\begin{align*}
&J(x,m,y,t;\mathrm{D}^h)-J(x,m,y,t;\hat{\mathrm{D}}) \\[-1.5ex]
&=\E\bigg[ f^{\mathrm{D}^h}(X_{(t+h)\text{-}}^{\mathrm{D}^h},M_{(t+h)\text{-}}^{\mathrm{D}^h},\mathrm{D}_{(t+h)\text{-}},t\!+\!h,t)-V(x,m,y,t) \\[-1.5ex]
&\quad + \int_t^{(t+h)\text{-}} \!\! \beta(r-t)H(X_r^{\mathrm{D}^h}\!,M_r^{\mathrm{D}^h}\!,\mathrm{D}^h_r,r)\d r + \int_t^{(t+h)\text{-}} \!\! \beta(r-t)c(X_r^{\mathrm{D}^h}\!,M_r^{\mathrm{D}^h}\!,\mathrm{D}^h_r,r)\diamond \d \mathrm{D}_r^h \\[-1.5ex]
&\quad + \int_t^{(t+h)\text{-}} \!\! \beta(r-t)h(M_r^{\mathrm{D}^h}\!,\mathrm{D}^h_r,r)\square \d M^{\mathrm{D}^h}_r \bigg].
\end{align*}
\end{small}
Because $f^{\mathrm{D}^h}(X_{t+h}^{\mathrm{D}^h},M_{t+h}^{\mathrm{D}^h},\mathrm{D}_{t+h}^h,t+h,s)=0$ a.s. on $\{\tau_a^{\mathrm{D}^h}\le t+h\}$, and
\begin{small}
\begin{align*}
f^{\mathrm{D}^h}(X_{(t+h)\text{-}}^{\mathrm{D}^h},M_{(t+h)\text{-}}^{\mathrm{D}^h},\mathrm{D}_{(t+h)\text{-}},t\!+\!h,s) \!=\! f(X_{(t+h)\text{-}}^{\mathrm{D}^h},M_{(t+h)\text{-}}^{\mathrm{D}^h},\mathrm{D}_{(t+h)\text{-}},t\!+\!h,s)
\end{align*}
\end{small}
 for any $0\le s\le t$, using Eq. (\ref{eq:probpre}), we have
\begin{small}
\begin{align} \label{eq:mideq}
&J(x,m,y,t;\mathrm{D}^h)-J(x,m,y,t;\hat{\mathrm{D}}) \\[-0.5ex]
&=\E\Big[ f(X_{(t+h)-}^{\mathrm{D}^h},M_{(t+h)-}^{\mathrm{D}^h},\mathrm{D}_{(t+h)-},t+h,t) - f(X_{(t+h)-}^{\mathrm{D}^h},M_{(t+h)-}^{\mathrm{D}^h},\mathrm{D}_{(t+h)-},t+h,t+h) \notag \\[-0.5ex]
&\quad - V(x,m,y,t) + V(X_{(t+h)-}^{\mathrm{D}^h},M_{(t+h)-}^{\mathrm{D}^h},\mathrm{D}_{(t+h)-},t+h)\notag \\[-0.5ex] &\quad  + \int_t^{(t+h)-} \beta(r-t)H(X_r^{\mathrm{D}^h},M_r^{\mathrm{D}^h},\mathrm{D}^h_r,r)\d r  + \int_t^{(t+h)-} \beta(r-t)c(X_r^{\mathrm{D}^h},M_r^{\mathrm{D}^h},\mathrm{D}^h_r,r)\diamond \d \mathrm{D}_r^h\notag \\ &\quad + \int_t^{(t+h)-} \beta(r-t)h(M_r^{\mathrm{D}^h},\mathrm{D}^h_r,r)\square \d M_r^{\mathrm{D}^h} \Big]. \notag
\end{align}
\end{small}
In order to prove the equilibrium inequality Eq. (\ref{eq:equilibrium}), we first consider the cases where $\Delta \mathrm{D}_t^h=0$ (so that $X_{t}^{\mathrm{D}^h}=X_{t-}^{\mathrm{D}^h}$). The continuity of $f_s$ and Eq. (\ref{eq:ver1}) yield $\mathcal{A}V(x,m,y,t)+H(x,m,y,t)-f_s(x,m,y,t,t)\leq0,\quad \forall (x,m,y,t)\in \mathcal{Q}_a$. 
As $\partial{W}$ is continuous, applying the extended change of variable formula; see Theorem 3.1 in \citet*{PG2005} and Fatou's lemma to Eq. (\ref{eq:mideq}), we have
\begin{small}
\begin{align}\label{eq:JJ}
&\limsup_{h\to 0+}\frac{J(x,m,y,t;\mathrm{D}^h)-J(x,m,y,t;\hat{\mathrm{D}})}{h}  \\[-0.4ex]
&\leq \limsup_{h\to 0+} \E\Bigg[\int_t^{(t+h)-}\Big (\frac{\mathcal{A}V(X_r^{\mathrm{D}^h},M_r^{\mathrm{D}^h},\mathrm{D}^h_r,r) -f_s(X_{(t+h)-}^{\mathrm{D}^h},M_{(t+h)-}^{\mathrm{D}^h},\mathrm{D}^h_{(t+h)-},t+h,r)}{h} \notag \\[-0.4ex]
&\quad +\frac{\beta(r-t)H(X_r^{\mathrm{D}^h},M_r^{\mathrm{D}^h},\mathrm{D}^h_r,r)}{h}\Big )\d r \notag \\[-0.4ex]
&\quad+\int_t^{(t+h)-}\frac{V_y(X_r^{\mathrm{D}^h},M_r^{\mathrm{D}^h},\mathrm{D}^h_r,r)-V_x(X_r^{\mathrm{D}^h},M_r^{\mathrm{D}^h},\mathrm{D}^h_r,r)}{h} \diamond \d \mathrm{D}_r^h \notag \\[-0.4ex]
&\quad + \int_t^{(t+h)-}\frac{\beta(r-t)c(X_r^{\mathrm{D}^h},M_r^{\mathrm{D}^h},\mathrm{D}^h_r,r)}{h}\diamond \d \mathrm{D}_r^h \notag \\[-0.4ex]
&\quad+\int_t^{(t+h)-}\frac{V_m(X_r^{\mathrm{D}^h},M_r^{\mathrm{D}^h},\mathrm{D}^h_r,r)+\beta(r-t)h(M_r^{\mathrm{D}^h},\mathrm{D}^h_r,r)}{h}\square \d M_r^{\mathrm{D}^h} \Bigg]. \notag
\end{align}
\end{small}
Besides, we have just used Eq. (\ref{eq:Dforstep3}) to make the local martingale a martingale. Thus, by Fatou's lemma, we have
\begin{small}
\begin{align}\label{eq:J1}
&\limsup_{h\to 0+}\frac{J(x,m,y,t;\mathrm{D}^h)-J(x,m,y,t;\hat{\mathrm{D}})}{h} \\[-0.6ex]
& \leq \mathbb{E}\Bigg[ \lim_{h\to 0+} \frac{1}{h} \bigg( \int_t^{(t+h)\text{-}} \!\! \Big( \mathcal{A}V(X_t^{\mathrm{D}^h}\!,M_t^{\mathrm{D}^h}\!,\mathrm{D}^h_t,t) + H(X_t^{\mathrm{D}^h}\!,M_t^{\mathrm{D}^h}\!,\mathrm{D}^h_t,t) \notag \\[-0.9ex]
&\qquad - f_s(X_{(t+h)\text{-}}^{\mathrm{D}^h}\!,M_{(t+h)\text{-}}^{\mathrm{D}^h}\!,\mathrm{D}^h_{(t+h)\text{-}},t\!+\!h,t) \Big) \mathrm{d}r \notag \\[-0.6ex]
&\quad + \int_t^{(t+h)\text{-}} \!\! \Big( V_y(X_t^{\mathrm{D}^h}\!,M_t^{\mathrm{D}^h}\!,\mathrm{D}^h_t,t) \!-\! V_x(X_t^{\mathrm{D}^h}\!,M_t^{\mathrm{D}^h}\!,\mathrm{D}^h_t,t) \!+\! c(X_t^{\mathrm{D}^h}\!,M_t^{\mathrm{D}^h}\!,\mathrm{D}^h_t,t) \Big) \diamond \mathrm{d}\mathrm{D}_r^h \notag \\[-0.6ex]
&\quad + \int_t^{(t+h)\text{-}} \!\! \Big( V_m(X_t^{\mathrm{D}^h}\!,M_t^{\mathrm{D}^h}\!,\mathrm{D}^h_t,t) \!+\! h(M_t^{\mathrm{D}^h}\!,\mathrm{D}^h_t,t) \Big) \square \mathrm{d}M_r^{\mathrm{D}^h} \bigg) \Bigg] \leq 0, \notag
\end{align}
\end{small}
where the integrability condition of Fatou's lemma 
in the first equality is satisfied due to Eqs. (\ref{eq:intcondition})-(\ref{eq:intcondition3}) and the $O(h)$-condition in Definition \ref{def:equi}. The last inequality holds due to Eq. (\ref{eq:ver2}), the fact that $(t+h)-\wedge \tau\leq t+h$, and the right-continuity of $\{X_s^{\mathrm{D}^h}\}_{s\geq t}$ and $\{\mathrm{D}^h_s\}_{s\geq t}$.\\
Thus, we have proved the equilibrium for the cases where $\Delta\mathrm{D}_t^h=0$. Then, we consider $\Delta\mathrm{D}_t^h>0$.
Noting that 
\begin{small}
\begin{align*}
&V(X^{{\mathrm{D}}^h}_t\!,M^{{\mathrm{D}}^h}_t\!,\mathrm{D}^h_t,t)-V(x,m,y,t)\\
&=\int_0^{\Delta \mathrm{D}^h_t\wedge (x-m)} \!\! \big(\!-V_x(x-r,m,y+r,t)+V_y(x-r,m,y+r,t)\big)\d r \\[-0.6ex]
&\quad +\int_{\Delta \mathrm{D}^h_t\wedge(x-m)}^{\Delta \mathrm{D}^h_t} \!\! \big(\!-V_x+V_y-V_m\big)(x-r,x-r,y+r,t)\d r \\[-0.6ex]
&\leq \int_0^{\Delta \mathrm{D}^h_t\wedge (x-m)} \!\! -c(x\!-\!r,m,y\!+\!r,t)\d r\! +\!\! \int_{\Delta \mathrm{D}^h_t\wedge(x-m)}^{\Delta \mathrm{D}^h_t} \!\! \big(\!-\!c(x\!-\!r,m,y\!+\!r,t)\!+\!h(x\!-\!r,y\!+\!r,t)\big)\d r \\[-0.6ex]
& =-\int_{t-}^{t}c(X_r^{\mathrm{D}^h}\!,M_r^{\mathrm{D}^h}\!,\mathrm{D}^h_r,r)\diamond \d \mathrm{D}_r^h -\int_{t-}^{t}h(M_r^{\mathrm{D}^h}\!,\mathrm{D}^h_r,r)\square \d M_r^{\mathrm{D}^h}
\end{align*}
\end{small}
holds due to the Newton-Leibniz formula and Eq. (\ref{eq:ver1}), combining it with Eq. (\ref{eq:J1}), we  obtain that $\hat{\mathrm{D}}$ is an equilibrium.
Therefore, the proof is complete.
\end{proof}
\begin{remark}\label{remark:regularity}
Compared with previous literature such as \citet*{LLY2024}, which requires the existence of $V_{xx}$ and $f_{xx}$ everywhere\footnote{see \textbf{Step 3.1} in the proof of Theorem 3.1 in \citet*{LLY2024}}, our regularity assumptions are significantly weaker. In Theorem \ref{th:verify}, as $(X_r^{\hat{\mathrm{D}}},M_r^{\hat{\mathrm{D}}},\hat{\mathrm{D}}_r,r)\in \overline{W}$ for all $r\geq t$, it suffices that $f^s$ and $V$ belong to $C^{2,1,1,1}(\overline{W})\cap C^{1,1,1,1}(\overline{P})$ to apply the It\^{o}-Tanaka-Meyer formula in \textbf{Step 1} and \textbf{Step 2}. To derive Eqs. (\ref{eq:ver_f}), (\ref{eq:ftt}) and (\ref{eq:Vt}), we require $V$ and $f^s$ to be $C^1$ in the first variable $x$. 
Furthermore, to establish the equilibrium properties in \textbf{Step 3}, we need $V\in C^{2,1,1,1}(\overline{P})$ and $f$ to be $C^1$ in the fifth variable $s$. The latter requirement is satisfied under some integrability conditions concerning $H$, $c$, and $h$; see \citet*{LLY2024}.

We emphasize that Theorem \ref{th:verify} does not require $V$ to satisfy a second-order smooth fit condition on $\partial W$ with respect to $x$. In practice, however, this condition often holds in concrete examples, as it typically induces the associated variational inequality; see, e.g., \citet*{CZ2025}, Remark \ref{remark:connect} and Subsection \ref{subsec:rigor}. To the best of our knowledge, there is no record in the literature of the explicit solution to an equilibrium where $V$ fails to be twice continuously differentiable with respect to $x$.
\end{remark}
As noted in Remark \ref{remark:equilibrium}, the definition of equilibrium can be extended under certain specific assumptions. The corresponding sufficient conditions are provided in the following proposition. These conditions can be verified by direct calculation. Therefore, in the next section, we will employ them to establish the existence of an equilibrium.
\begin{proposition}\label{prop:f^s}
Assuming Conditions (1), (3), (4) and (5) in Theorem \ref{th:verify} hold, together with the following:\\
(1) For any $x\geq0$, $m\geq0$, $y\geq0$, $t\geq0$, and $0\le s\le t$, we have
\begin{align}\label{eq:module}
|f_s(x,m,y,t,s)|&\leq C(1+x^2+m^2+y^2+g(s)+g(t)),
\end{align}
where $g$ is bounded on $[t,t+h]$ for any $t\geq0$ and $h>0$.\\
(2)
The functions $H$, $c$ and $h$ satisfy the linear growth condition
\begin{align}\label{eq:module2}
&|H(x,m,y,t)|+|c(x,m,y,t)|+|h(m,y,t)|\leq C(1+|x|+|m|+|y|+t)
\end{align}
for any $x\geq0$, $m\geq0$, $y\geq0$, and $t\geq0$,
where $C>0$ is a constant.\\
(3) The liquidation boundary function $a$ is bounded.\\
Then $\hat{\mathrm{D}}$ satisfies $$\lim \sup_{h\to 0+} \frac{J(x,m,y,t;\mathrm{D}^h)-J(x,m,y,t;\mathrm{D}^{x,m,y,t,\hat{\Xi}})}{h}\leq 0,$$ where $h>0$ and $\mathrm{D}^h$ are defined in Eq. (\ref{eq:D^h}) for an arbitrary control $\mathrm{L}\in\mathcal{D}_t$ with $\mathrm{L}_{t-}=y$.
\end{proposition}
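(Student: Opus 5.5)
The plan is to rerun Step 3 of the proof of Theorem \ref{th:verify} without the $O(h)$-condition. Conditions (2) and (5) of that theorem were used only to justify Fatou's lemma and dominated convergence. Conditions (1)--(3) of the present proposition, together with moment bounds on the controlled state, should take over that role.

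First, starting from Eq. (\ref{eq:mideq}), I would handle the jump at time $t$ exactly as in Step 3. The Newton--Leibniz computation and Eq. (\ref{eq:ver1}) show that an initial jump $\Delta \mathrm{L}_t$ contributes a nonpositive $O(1)$ term independent of $h$. So it suffices to treat $\Delta\mathrm{L}_t=0$ and then restart from $(X_t,M_t,\mathrm{L}_t,t)$.

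Next, I would apply the extended change-of-variable formula of \citet*{PG2005} to $V$ on $[t,(t+h)-\wedge\tau_a]$. Together with the $f$-difference $f(\cdot,t+h,t)-f(\cdot,t+h,t+h)=-\int_t^{t+h} f_s(\cdot,t+h,r)\,\d r$, this writes the increment $J(\mathrm{D}^h)-J(\hat{\mathrm{D}})$ as a sum of three kinds of terms:
\begin{itemize}
\item[(i)] a $\d r$-integral of $\mathcal{A}V+\beta H-f_s$;
\item[(ii)] a $\diamond\,\d\mathrm{L}$-integral of $V_y-V_x+\beta c$;
\item[(iii)] a $\square\,\d M$-integral of $V_m+\beta h$.
\end{itemize}
The integrands in (ii) are $\le 0$ by Eq. (\ref{eq:ver1}) and continuity of $\beta$. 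More precisely, $V_y-V_x+c\le0$ and $\beta(r-t)c$ differs from $c$ by $O(r-t)\cdot|c|$. The integrands in (iii) vanish up to $(1-\beta(r-t))h$ by Eq. (\ref{eq:ver2}). The key point is that these two terms no longer need division by $h$ to be controlled in size. It is enough to show that the nonsign-definite remainders $(\beta(r-t)-1)c\diamond\d\mathrm{L}$ and $(\beta(r-t)-1)h\square\d M$ are $o(h)$ in expectation. I would bound $|\beta(r-t)-1|\le \|\beta'\|_\infty h$ on $[t,t+h]$. The linear growth (\ref{eq:module2}) then gives a bound of the form $Ch\,\E[(1+\sup|X|+\sup|M|+\mathrm{L}_{t+h}+t)(\mathrm{L}_{(t+h)-}-\mathrm{L}_t+|M_{t+h}-M_t|)]$. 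This is $h\cdot o(1)$ provided the bracket is uniformly integrable and tends to $0$, by right-continuity at $t$ and dominated convergence. Boundedness of $a$ (Condition (3)) is used here: before $\tau_a$ one has $M\ge$ some lower bound, so $M$ is bounded below, and $|M_{t+h}-M_t|$ is controlled by $|x|+\sup|X|+\sup|a|$.

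For term (i), the quantity $\frac1h\int_t^{t+h}(\mathcal{A}V+\beta H-f_s)\,\d r$ has a limsup that is nonpositive, because its value at $(x,m,y,t)$ is $\le0$ by Eq. (\ref{eq:ver1}). The passage to the limit needs a dominating function. I would use the quadratic bound (\ref{eq:module}) for $f_s$, the linear bound for $H$, and, for $\mathcal{A}V$, Eq. (\ref{eq:ver1}) on $W$ combined with (\ref{eq:ver4}) and the identity $V=f(\cdot,\cdot,\cdot,\cdot,\cdot)$ from Step 2. That identity gives $\mathcal{A}V=\mathcal{A}f^t-f_s(\cdot,t)$ plus terms controlled by (\ref{eq:module})--(\ref{eq:module2}). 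All of these are then dominated by $C(1+\sup_{[t,t+h]}X^2+M^2+\mathrm{L}^2+g)$. Finiteness of $\E[\sup_{[t,t+h]}|X|^2]$ follows from the standard estimate behind Proposition \ref{prop:solution1}, using Assumption \ref{as:allpaper}(a)--(b) and the second-moment condition on $\mathrm{L}$. The local martingale term is removed by (\ref{eq:Dforstep3}).

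The main obstacle is step (ii) when $\mathrm{L}$ has infinite initial slope, for example $\mathrm{L}_r-\mathrm{L}_t\sim\sqrt{r-t}$. There $\frac1h\E[\mathrm{L}_{t+h}-\mathrm{L}_t]\to\infty$, so Fatou cannot be applied to the $\diamond\,\d\mathrm{L}$-term after division by $h$. My plan is never to divide that term by $h$. Instead I would keep its sign-definite part, which is $\le0$, and show that only the factor $(1-\beta)$, which is $O(h)$, multiplies the growth-bounded $c$ and $h$. That yields an $O(h)\cdot o(1)$ remainder. If the $V_y-V_x+c$ inequality only holds on $\mathcal{Q}_a$ up to the boundary, I would additionally use Condition (b) of Definition \ref{def:singular}. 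It keeps the perturbed path in $\mathcal{Q}_a$ before $\tau_a$, so Eq. (\ref{eq:ver1}) applies along the whole path.
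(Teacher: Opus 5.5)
Your decomposition and your key device coincide with the paper's proof in Appendix \ref{app:3}. You rerun (\ref{eq:JJ}), drop $V_y-V_x+c\le0$ and $V_m+h=0$ by sign without dividing by $h$, and keep only the remainders carrying the factor $(\beta(r-t)-1)/h=O(1)$. These vanish because $\mathrm{L}_{(t+h)-}-\mathrm{L}_t\to0$, which is exactly how the paper proves (\ref{eq:J13})--(\ref{eq:J14}).

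The gap is in where your dominating functions come from. You obtain $\E[\sup_{[t,t+h]}|X|^2]<\infty$ and the integrability of $(1+\sup|X|+\mathrm{L}_{t+h})(\mathrm{L}_{(t+h)-}-\mathrm{L}_t)$ from ``the second-moment condition on $\mathrm{L}$''. But the statement concerns an arbitrary $\mathrm{L}\in\mathcal{D}_t$, and Definition \ref{def:singular} imposes no moment condition; the one in Proposition \ref{prop:solution1} is merely sufficient for solvability of the state equation. As written, your argument covers only square-integrable perturbations, which undercuts the purpose of the proposition: enlarging the class of perturbations.

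The missing idea is that Condition (3) supplies these bounds a priori; its main role is not the lower bound on $M$.
\begin{enumerate}
\item Before $\tau_a$, Condition (b) of Definition \ref{def:singular} rules out jumps into the ruin region, so $X_r\ge a(M_r)\ge A:=\inf a$ and $|M_r|\le|A|+|m|$.
\item The state equation then gives $\mathrm{L}_r-\mathrm{L}_{t-}\le x-A+\int_t^r\mu\,\d u+\int_t^r\sigma\,\d B_u$. Squaring, and absorbing the $\mathrm{L}$-dependence of $\mu,\sigma$ for small $h$, yields $\E[(\mathrm{D}^h_{t+h})^2]\le K\E[(1+\sup_{[t,t+h]}X)^2]$, which is Eq. (\ref{eq:D-D}).
\item Apply It\^{o}'s formula to $(X-A)^2$. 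The control enters only through $-2\int(X-A)\,\d\mathrm{L}\le0$, and jumps likewise only decrease $(X-A)^2$.
\item The BDG inequality then gives $\E[\sup_{[t,t+h]}|X|^2]<\infty$ for small $h$, Eqs. (\ref{eq:fx1})--(\ref{eq:X^2<inf}), with no assumption on $\mathrm{L}$.
\end{enumerate}
With these estimates in place, your uniform-integrability steps go through.

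A smaller point concerns term (i). Eq. (\ref{eq:ver4}) holds only on $\overline{W}$, so it gives no lower bound on $\mathcal{A}V$ on $P$, which the perturbed path may enter. Your two-sided domination of $\mathcal{A}V$ is therefore unsupported there; also, the identity should read $\mathcal{A}V=\mathcal{A}f^t+f_s(x,m,y,t,t)$. You do not need this domination. Eq. (\ref{eq:ver1}) gives $\mathcal{A}V+H-f_s(x,m,y,t,t)\le0$ at every point of $\mathcal{Q}_a$, so, as in the paper, discard that piece along the path by sign. Then pass to the limit only in $f_s(X_r,M_r,\mathrm{L}_r,r,r)-f_s(X_{(t+h)-},M_{(t+h)-},\mathrm{L}_{(t+h)-},t+h,r)$ and in $(\beta(r-t)-1)H$. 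Both are dominated via (\ref{eq:module})--(\ref{eq:module2}) and the moment bounds above.
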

\begin{proof}
See Appendix \ref{app:3}.
\end{proof}
\begin{remark}\label{remark:int}
Proposition \ref{prop:f^s} provides an alternative to Condition (2) in Theorem \ref{th:verify}. Moreover, Condition (5) for the case $s=t$ is unnecessary if we have $\lim_{s\to t}f(x,m,y,t,s)=V(x,m,y,t)$. Indeed, under this assumption, Condition (5) in Theorem \ref{th:verify} holds for $s<t$, hence Eq. \eqref{eq:probpre} holds for $s<t$. 
As a result, Condition (b) in Definition \ref{def:sing2} and the DCT indicate Condition (5) in Theorem \ref{th:verify} holds for $s=t$. This result exactly corresponds to \textbf{Step 2} of Theorem \ref{th:verify}; thereafter, \textbf{Step 3} can be verified by the same argument as in the proof of Theorem \ref{th:verify}. We will use these alternative conditions in the proof of Proposition \ref{prop:spec} in Subsection \ref{subsec:rigor}.
\end{remark}
The verification theorem featured by the HJB equation system Eqs. (\ref{eq:ver2})-(\ref{eq:ver7}) provides a sufficient condition for the existence of the equilibrium. We further investigate the necessary conditions that Eqs. (\ref{eq:ver2})-(\ref{eq:ver7})  hold.
\begin{theorem}\label{th:nec}
Let $\hat{\Xi}$ be an admissible singular control law with equilibrium payoff functional $V$. Define
\begin{small}
\begin{align*}
f^s(x,m,y,t):=f(x,m,y,t,s),\; \forall s\ge0, \forall(x,m,y,t)\!\in\!\mathbb{R}\!\times\!(-\infty,x]\!\times\![0,\infty)\!\times\![s,\infty).
\end{align*}
\end{small}
where $f$ is given by Eq. (\ref{eq:probpre}). Assume that the following conditions hold.\\
(1) \begin{small}
\begin{align*}
&V \in C^{2,1,1,1}(\mathcal{Q}_a),  \ f \in C^{2,1,1,1,1}(\{(x,m,y,t,s) \mid (x,m,y,t) \in W, s \in [0,t]\}), \\[-0.6ex]
&f \in C^{1,1,1,1,1}(\{(x,m,y,t,s) \mid (x,m,y,t) \in P, s \in [0,t]\}).
\end{align*}
\end{small}\\
(2) For any $(x,m,y,t)\in\mathcal{Q}_a$, the singular control
    $\tilde{\mathrm{D}}:=\{\tilde{\mathrm{D}}_r\equiv y,\;r\ge t\}$ is admissible for Problem \eqref{eq:payoff}.\\
(3) For any $(x,m,y,t)\in\mathcal{Q}_a$, any $s\in[0,t]$, and $\mathrm{D}=\tilde{\mathrm{D}},\hat{\mathrm{D}}$, there exists $h_0:=h_0(x,m,y,t,s,\mathrm{D})$ such that
\begin{small}
\begin{align}
&\E_{x,m,y,t}\Big[\int_t^{t+h_0} \!\! \big(\sigma V_x(X_r^{\mathrm{D}},M_r^{\mathrm{D}},\mathrm{D}_r,r)\big)^2 \d r\Big] < \infty, \ \text{and} \label{eq:cond1} \\[-0.6ex]
&\E\Big[\sup_{h\in(0,h_0)} \int_t^{(t+h)\wedge\tau_a^{\mathrm{D}}} \!\!| I(r)|\d r \Big] < \infty \notag
\end{align}
\end{small}
hold, where $I(r)$ is given by \begin{small}
\begin{align*}
\frac{\mathcal{A}V(X_r^{\mathrm{D}},M_r^{\mathrm{D}},\mathrm{D}_r,r) - f_s(X_{(t+h)\text{-}}^{\mathrm{D}},M_{(t+h)\text{-}}^{\mathrm{D}},\mathrm{D}_{(t+h)\text{-}},t\!+\!h,r) + \beta(r\!-\!t)H(X_r^{\mathrm{D}},M_r^{\mathrm{D}},\mathrm{D}_r,r)}{h}.
\end{align*}
\end{small}
(4) For any $(m,t)$ such that $(x,m,y,t)\in W$  and any 
$d>0$, there exists $h>0$ such that for any $t_1\in(t,t+h)$, we have
$\E_{m,m,y,t}\big[\inf\limits_{r\in[t,t_1\wedge\tau_d)}X_r^{\hat{\mathrm{D}}}\big]<m$, where \begin{small}$$\tau_d:=\tau_a^{\hat{\mathrm{D}}} \wedge \inf\{r|r\geq t,|X^{\hat{\mathrm{D}}}-x|+|M^{\hat{\mathrm{D}}}-m|+|\hat{\mathrm{D}}-y|+|r-t|>d\}.$$\end{small}
Then the extended HJB system Eqs. (\ref{eq:ver2})-(\ref{eq:ver7}) is satisfied.
\end{theorem}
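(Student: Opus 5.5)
We split the extended HJB system into the ``$f^s$ part'' (Eqs. (\ref{eq:ver4})--(\ref{eq:ver7})), which comes from the probabilistic representation (\ref{eq:probpre}) alone, and the ``$V$ part'' (Eqs. (\ref{eq:ver2}), (\ref{eq:ver3})), which comes from $V=f(\cdot,t)$ together with the equilibrium property.

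\textbf{Boundary conditions (\ref{eq:ver3}), (\ref{eq:ver7}).} If $a(m)>m$ and $x=a(m)$, the definition of $\tau_a$ (a closed-region hitting time) gives $\tau_a^{\hat{\mathrm{D}}}=t$. Hence every integral in (\ref{eq:probpre}) vanishes, so $f^s(a(m),m,y,t)=0$. Since $V=J(\cdot;\hat{\mathrm{D}})=f(\cdot,t)$, this also gives $V(a(m),m,y,t)=0$.

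\textbf{Interior equation (\ref{eq:ver4}).} Fix $(x,m,y,t)\in W$ with $x>m$. Let $\tau_d$ be the exit time of a small ball around the initial point, as in Condition (4). Before $\tau_d$, the process $\hat{\mathrm{D}}$ is constant and $M$ is constant, because $W$ is relatively open and $x>m$.

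\emph{Step 1.} The strong Markov property applied to (\ref{eq:probpre}) at $(t+h)\wedge\tau_d$ gives
\[
f^s(x,m,y,t)=\E\Big[f^s(X_{(t+h)\wedge\tau_d},\dots)+\int_t^{(t+h)\wedge\tau_d}\beta(r-s)H\,\d r\Big].
\]

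\emph{Step 2.} Itô's formula, using $f\in C^{2,1,1,1,1}$ on $W$, expresses the first term through $\E\int_t^{(t+h)\wedge\tau_d}\mathcal{A}f^s\,\d r$. The martingale term is killed by localisation.

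\emph{Step 3.} Dividing by $h$ and letting $h\to0$ (dominated convergence by continuity on the ball) yields $\mathcal{A}f^s+\beta(t-s)H=0$ on $W$. Continuity extends this to $\overline W$.

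\textbf{Points with $x=m$: Neumann conditions (\ref{eq:ver6}) and (\ref{eq:ver2}).} At $x=m$ we instead use the Itô–Tanaka–Meyer expansion as in Step 1 of Theorem \ref{th:verify}, which now contains the extra term $\int(f^s_m+\beta(r-s)h)\,\d M^c_r$. The $\d r$-part vanishes by the interior equation. So $\E\int_t^{t_1\wedge\tau_d}(f^s_m+\beta h)\,\d M_r=o(\E[m-M_{t_1\wedge\tau_d}])$ would give the result, but that is not yet what we need. Arguing instead by contradiction: if $f^s_m+\beta(t-s)h\neq0$ at $(m,m,y,t)$, it keeps a sign on a small ball by continuity. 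The expectation is then strictly nonzero, because Condition (4) guarantees $\E[M_{t_1\wedge\tau_d}]<m$, i.e.\ $M$ genuinely decreases. This contradicts the identity $0=\E[\cdots]$ that follows from (\ref{eq:probpre}) together with Itô. This gives (\ref{eq:ver6}); taking $s=t$ and using $V=f(\cdot,t)$, the $s$-derivative terms are absorbed by the same Itô expansion used in Step 2 of Theorem \ref{th:verify}, which gives (\ref{eq:ver2}).

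\textbf{Purchasing region (\ref{eq:ver5}).} At a point of $P$, the generated control pushes along the boundary. Since $\hat{\mathrm{D}}$ reflects along $x=b(m,y,t)$, for a starting point in $P$ we compare (\ref{eq:probpre}) at the point with its value after an infinitesimal push. The jump and reflection identity analogous to (\ref{eq:verf2}) gives
\[
f^s(x,m,y,t)-f^s(x-u,m,y+u,t)=\int_0^u\beta(t-s)c(x-v,m,y+v,t)\,\d v+o(u).
\]
Differentiating in $u$ at $0$ yields (\ref{eq:ver5}). For boundary points where no jump occurs, one instead uses the local time term $\int(f^s_y-f^s_x+\beta c)\,\d\hat{\mathrm{D}}^c$ in the Itô expansion, and must show $\E[\hat{\mathrm{D}}^c]$ is not $o$ of the remaining terms.

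\textbf{Main obstacle.} The Neumann conditions are the hardest part: one must show that the $\d M$ contribution dominates the $O(h)$ terms. The plan is to use Condition (4) together with the bound $\E[m-M_{t_1\wedge\tau_d}]\gtrsim\sqrt{t_1-t}\gg t_1-t$ (reflected-Brownian scaling). The integrability in Condition (3), i.e.\ (\ref{eq:cond1}) and the $I(r)$ bound, is what justifies the limits and the martingale property in these steps; Conditions (2) and (3) on $\tilde{\mathrm{D}}$ would be used to test the equilibrium inequality with the no-action perturbation when deriving the $V$ part on $W$.
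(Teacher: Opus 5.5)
Your handling of Eqs.~(\ref{eq:ver3}), (\ref{eq:ver7}) and (\ref{eq:ver4}) follows the paper's proof essentially step for step. So does your treatment of Eq.~(\ref{eq:ver5}) at off-diagonal interior points of $P$, and of Eq.~(\ref{eq:ver6}) at diagonal points $(m,m,y,t)\in W$ (It\^o--Tanaka--Meyer, vanishing $\mathrm{d}r$-term, contradiction with Condition (4)).

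\textbf{The gap.} Eq.~(\ref{eq:ver6}), and with it Eq.~(\ref{eq:ver2}), is not proved at diagonal points $(m,m,y,t)$ lying in $P$; in the dividend example these are all $m>m^*$. Your Neumann argument does not apply there, for two reasons. It needs a ball on which $\hat{\mathrm{D}}$ is constant, whereas at a point of $P$ the control jumps at once and drags $M$ down with it. It also uses Condition (4), which is assumed only at points of $W$. Nor can you pass to the limit from $W$, since the diagonal part of $P$ can have nonempty relative interior.

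The paper closes this case with the jump identity you already use for Eq.~(\ref{eq:ver5}), but pushed along the diagonal. From $(m,m,y,t)\in P$ a push of size $u$ lands at $(m-u,m-u,y+u,t)$. Now both the second sum of the $\diamond$-integral and the jump part of the $\square$-integral contribute, so
\begin{align*}
f^s(m,m,y,t)-f^s(m-u,m-u,y+u,t)=\int_0^u\beta(t-s)\big(c(m-v,m-v,y+v,t)-h(m-v,y+v,t)\big)\,\mathrm{d}v .
\end{align*}
Newton--Leibniz and differentiation in $u$ give $f^s_x+f^s_m-f^s_y=\beta(t-s)(c-h)$ on the diagonal. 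Subtracting Eq.~(\ref{eq:ver5}) leaves $f^s_m=-\beta(t-s)h$.

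\textbf{Smaller points.}
\begin{enumerate}
\item The ``main obstacle'' you describe does not arise. Once Eq.~(\ref{eq:ver4}) holds on the ball, the $\mathrm{d}r$-term is identically zero. You then have the exact identity $\mathbb{E}\big[\int_t^{t_1\wedge\tau_d}(f^s_m+\beta(r-s)h)\,\mathrm{d}M^c_r\big]=0$, and the bare strict inequality in Condition (4) gives the contradiction. A rate $\mathbb{E}[m-M]\gtrsim\sqrt{t_1-t}$ is neither needed nor supplied by the hypotheses.
\item Eq.~(\ref{eq:ver2}) follows from Eq.~(\ref{eq:ver6}) at $s=t$ simply because $V_m(x,m,y,t)=f_m(x,m,y,t,t)$ and $\beta(0)=1$. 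No $s$-derivative terms arise.
\item For Eq.~(\ref{eq:ver5}) at points of $\partial P$ you do not need the local-time argument you leave open. Continuity of $f^s_x,f^s_y,c$ on $P$ extends the identity from $P^\circ$ to its closure; the paper itself only argues on $P^\circ$.
\item The paper's proof also derives the variational inequality Eq.~(\ref{eq:ver1}). It does so by testing the equilibrium inequality with $\mathrm{L}\equiv y+\Delta y$ on $[t,t+h)$, for $\Delta y>0$ and for $\Delta y=0$. That is where $V\in C^{2,1,1,1}(\mathcal{Q}_a)$ and Conditions (2)--(3) for $\tilde{\mathrm{D}}$ are used; your last sentence only gestures at this.
\end{enumerate}
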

\begin{proof}
Based on  Definition of the stopping time, Eqs. (\ref{eq:ver3}) and (\ref{eq:ver7}) hold. Next, we prove Eqs. (\ref{eq:ver4})-(\ref{eq:ver5}). Let $P^\circ$ denote the interior of $P$. For any $(x,m,y,t)\in P^{\circ}$ (so that $x\neq m$), $s\leq t$ and $u>0$ such that $(x-v,m,y+v,t)\in P^{\circ}$ for all $0<v<u$, Eq. (\ref{eq:probpre}) and the definition of the new integrals yield\begin{small}
\begin{align*}
f(x,m,y,t,s)-f(x-u,m,y+u,t,s)=\int_0^u\beta(t-s)c(x-r,m,y+r,t)\,\mathrm{d}r .
\end{align*}
\end{small}
Hence, using the Newton-Leibniz formula, we have
\begin{small}
\begin{align*}
\int_0^u \!\! \beta(t-s)c(x-r,m,y+r,t)\d r = \int_0^u \!\! \big(f_x - f_y\big)(x-r,m,y+r,t,s) \d r.
\end{align*}
\end{small}
Differentiating with respect to $u$ on both sides and using the continuity of $f_x$ and $f_y$, we obtain Eq. (\ref{eq:ver5}). Moreover, applying Eq.  (\ref{eq:ver5}) and a similar argument to $(m,m,y,t)\in P$ and $(m-u,m-u,y+u,t)\in P$ gives Eq. (\ref{eq:ver6}) for $(x,m,y,t)\in P$.

According to Definition \ref{def:sing2}, for $(x,m,y,t)\in W^{\circ}$ (hence $x>m$), there exists $d>0$ such that $\hat{\mathrm{D}}$ is constant on $[t,(t+h)\wedge \tau_d]$ for any $h>0$. Using the It\^{o}-Tanaka-Meyer formula for $f^s$, we have\begin{small}
\begin{align*}
\E_{x,m,y,t}\!\Biggl[\frac{1}{h}\int_t^{(t+h)\wedge \tau_d}\!\bigl(\mathcal{A}f^s\bigr)(X_r^{\hat{\mathrm{D}}},M_r^{\hat{\mathrm{D}}},\hat{\mathrm{D}},r)
+\beta(r-s)H(X_r^{\hat{\mathrm{D}}},M_r^{\hat{\mathrm{D}}},\hat{\mathrm{D}},r)\,\mathrm{d}r\Biggr]=0,
\end{align*}
\end{small}
where Condition (3) ensures that the local martingale is a true martingale. Letting $h\to0^+$ and applying the DCT together with Conditions (1) and (3) yields Eq. (\ref{eq:ver4}). For $(m,m,y,t)\in W$, Eq. (\ref{eq:ver4}) and the It\^{o}-Tanaka-Meyer formula imply\begin{small}
\begin{align*}
\E_{m,m,y,t}\!\Biggl[\int_{t}^{(t+h)\wedge\tau_a^{\hat{\mathrm{D}}}}\!\!\bigl[f_m(X_r^{\hat{\mathrm{D}}},M_r^{\hat{\mathrm{D}}},\hat{\mathrm{D}}_r,r,s)
+\beta(r-s)h(M_r^{\hat{\mathrm{D}}},\hat{\mathrm{D}}_r,r)\bigr]\,\mathrm{d}(M^{\hat{\mathrm{D}}})^c_r\Biggr]=0 .
\end{align*}
\end{small}
Assuming $f_m(m,m,y,t,s)+\beta(t-s)h(m,y,t)=\epsilon>0$, we have\begin{small}
\begin{align*}
0&=\E_{m,m,y,t}\Bigg[\int_t^{(t+h)\wedge \tau} f_m(X_r^{\hat{\mathrm{D}}},M_r^{\hat{\mathrm{D}}}, \hat{\mathrm{D}}_r, r,s)+\beta(r-s) h(M_r^{\hat{\mathrm{D}}}, \hat{\mathrm{D}}_r, r)  \mathrm{d}(M^{\hat{\mathrm{D}}})^c_r\Bigg]\\&<\frac\epsilon2\E_{m,m,y,t}\left[\inf_{r\in[t,(t+h)\wedge\tau)}X_r^{\hat{\mathrm{D}}}-m\right],
\end{align*}
\end{small}
where \begin{small}$$\tau:=\tau_a^{\hat{\mathrm{D}}}\wedge \inf_{r\geq t}\left\{f_m(X_r^{\hat{\mathrm{D}}},M_r^{\hat{\mathrm{D}}}, \hat{\mathrm{D}}_r, r,s)+\beta(t-s)h(M_r^{\hat{\mathrm{D}}}, \hat{\mathrm{D}}_r, r)\leq \frac\epsilon2\right\}.$$\end{small}
This is a contradiction to Condition (4). 
Consequently, we obtain $f_m(m,m,y,t,s)+\beta(t-s)h(m,y,t)=0$, and thus Eq. (\ref{eq:ver6}) holds for $(x,m,y,t)\in W$. 
Therefore, Eqs. (\ref{eq:ver2})-(\ref{eq:ver7}) are satisfied.

Then, in Proof of Eq. (\ref{eq:ver1}), we make the same substitution as in the proof of Theorem \ref{th:verify}: we replace $(t+h)-\wedge \tau_a$ with $(t+h)-$. As $V\in C^{2,1,1,1}(\mathcal{Q}_a)$ and $\hat{\mathrm{D}}$ is an equilibrium singular control, we have
\begin{small}
\begin{align} \label{eq:J3}
&J(x,m,y,t;\mathrm{D}^h)-J(x,m,y,t;\hat{\mathrm{D}}) \\[-0.8ex]
&=\E\bigg[ \int_t^{(t+h)\text{-}} \!\! \tfrac{\mathcal{A}V(X_r^{\mathrm{D}^h}\!,M_r^{\mathrm{D}^h}\!,\mathrm{D}_r^h,r) - f_s(X_{(t+h)\text{-}}^{\mathrm{D}^h},M_{(t+h)\text{-}}^{\mathrm{D}^h},\mathrm{D}_{(t+h)\text{-}}^h,t+h,r) + \beta(r-t)H(X_r^{\mathrm{D}^h}\!,M_r^{\mathrm{D}^h}\!,\mathrm{D}_r^h,r)}{h} \d r \notag \\[-0.5ex]
&\quad + \int_t^{(t+h)\text{-}} \!\!\! \big( V_y-V_x+\beta(r-t)c \big)(X_r^{\mathrm{D}^h}\!,M_r^{\mathrm{D}^h}\!,\mathrm{D}_r^h\!,r) \diamond \d \mathrm{D}_r^h \bigg] \le 0.\notag
\end{align}
\end{small}
Consider the perturbation in the form of\begin{small}
\begin{align*}
\mathrm{D}_r^h=
\begin{cases}
y+\Delta y, & r \in [t, t+h), \\[6pt]
\mathrm{D}_r^{X_{(t+h)-}^{\mathrm{D}^h},\ M^{D^h}_{(t+h)-},\ \mathrm{D}_{(t+h)-}^h,\ \hat{\Xi}}, & r \in [t+h, \infty).
\end{cases}
\end{align*}\end{small}
For $\Delta y>0$, taking the limit as $h\to0^+$ in Eq. (\ref{eq:J3}) and using the DCT together with Condition (3), we obtain
\begin{small}
\begin{align*}
\E\Bigg[\int_0^{\Delta y}V_y(x-r,m,y+r,t)-V_x(x-r,m,y+r,t)+c(x-r,m,y+r,t) \d r\Bigg]\leq 0.
\end{align*}
\end{small}
Thus, we have $c(x,m,y,t)-V_x(x,m,y,t)+V_y(x,m,y,t)\leq 0$. Moreover, letting $\Delta y=0$, we have
\begin{small}
\begin{align*}
&\E\Bigg[\frac1h\Bigg(\int_t^{(t+h)-}\mathcal{A}V(X_r^{\mathrm{D}^h},M_r^{\mathrm{D}^h},\mathrm{D}^h_r,r)-f_s(X_{(t+h)-}^{\mathrm{D}^h},M_{(t+h)-}^{\mathrm{D}^h},\mathrm{D}^h_{(t+h)-},t+h,r)
\\ &\quad+\beta(r-t)H(X_r^{\mathrm{D}^h},M_r^{\mathrm{D}^h},\mathrm{D}^h_r,r)\Bigg)\d r\leq 0
\end{align*}
\end{small}
holds for any $h>0$. Hence, we obtain $\mathcal{A}V(x,m,y,t)+H(x,m,y,t)-f_s(x,m,y,t,t)\leq 0$, which yields
\begin{small}$$\mathcal{A}V(x,m,y,t)+H(x,m,y,t)-\mathcal{A}f(x,m,y,t,t)+\mathcal{A}f^t(x,m,y,t)\leq 0.$$\end{small}
Therefore, we obtain Eq. (\ref{eq:ver1}) and the proof is complete.
\end{proof}
Proposition \ref{prop:spec} serves as a supporting case for this theorem; see Remark \ref{remark:nec} below. Condition (4) ensures that the running minimum process is updated and excludes increasing processes such as $X_r=x+r-t,\quad \forall r\geq t$, a.s..
\begin{remark}\label{remark:connect}
In Theorem \ref{th:nec}, we verify the variational inequality through Eq. (\ref{eq:J3}), which requires $V\in C^{2,1,1,1}(\mathcal{Q}_a)$.  Therefore, the second-order smooth fit conditions are typically associated with the establishment of the variational inequality. In contrast, the regularity assumptions on $V$ in Theorem \ref{th:verify} are relatively weaker.

In the next section, we present the existence of the equilibrium through a concrete example to illustrate the interpretation and applicability of Theorems \ref{th:verify} and \ref{th:nec}. The uniqueness of the equilibrium remains an open problem at present.
\end{remark}
\section{A Specific Example}\label{sec:special}
In Subsection \ref{subsec:rigor}, we establish the existence of an equilibrium and verify its key properties, thereby supporting the verification theorem. In Subsection \ref{subsec:num}, we perform numerical simulations to illustrate the shape of the boundary separating the waiting region from the action region. In this section, we abbreviate $X^{\mathrm{D}}$, $M^{\mathrm{D}}$ to $X$ and $M$, respectively.
\subsection{A Dividend Problem under Pseudo-Exponential Discount  Function}\label{subsec:rigor}
Inspired by \citet*{FR2025}, we consider the payoff functional\footnote{In this paper, we focus on the running minimum process. Consequently, we only consider examples where $\mu$ and $\sigma$ are independent of the singular control $\mathrm{D}$.}
$$J(x,m,t;\mathrm{D})=\E_{x,m,t}\left[\int_t^{\tau_{t,a}^{\mathrm{D}}}\beta(r-t)\cdot c (M_r) \diamond \d\mathrm{D}_r\right],$$
where $\beta(t)=\rho\exp\{-\delta t\}+(1-\rho)\exp\{-(\delta+\gamma)t\}$, $\rho \in (0,1), \delta,\gamma>0$, and the definition of $c$ is given by
$$c(m)=
\begin{cases}
e^{-qm}, & m \leq \overline{m}, \\[6pt]
e^{-q\overline{m}}, & m >\overline{m},
\end{cases}
$$
where $\overline{m}\in (0,\infty]$.
The discount function $\beta$ is of the pseudo-exponential form. 
For more details, see, e.g., \citet*{EkelandLazrak2006}, \citet*{K2007}, and \citet*{Bjork2021}. 

The function $c$ is designed to incorporate key insights from dividend smoothing, signaling theory, and the scarring effect. First, firms typically maintain stable dividends to avoid negative signals, cutting them only in severe crises, as supported by \citet*{AMG2013} and \citet*{AAW2015}
. Second, following a major downturn, firms tend to remain conservative for extended periods, consistent with the scarring effect discussed in \citet*{KJL2020} and \citet*{K2021}. This persistent caution motivates modeling $c$ as a function of the running minimum wealth $m$. Consequently, a stable dividend policy is sustained when the historical minimum wealth stays above $\overline{m}$, whereas crossing below $\overline{m}$ triggers a shift to a more conservative strategy.

Assume that $\{X_t\}_{t\geq 0}$ evolves following the SDE $X_t=x+\mu t+\sigma B_t-\mathrm{D}_t,t\geq 0$, where $\mu,\sigma>0$, and assume the bankruptcy boundary $a(m)=0, \quad \forall m\in \R$. Thus, we focus on cases with $m\geq 0$. 

We define $f(x,m,t,s):=\E_{x,m,t}\big[\int_t^{\tau_{t,a}^{\mathrm{D}}}\beta(r-s)c(M_r)\diamond \d\mathrm{D}_r\big]$. 
We consider the case where the free boundary $b$, which serves as the boundary of the waiting region and purchasing region, is independent of $t$. Hence, according to Eq. (\ref{eq:probpre}), we expect $f$ to be related only to $x,m$ and $\kappa=t-s$. This is similar to the time-homogeneous problem in traditional singular control problems. Then, 
we only need to study the following free boundary problem according to Theorem \ref{th:verify}:
\begin{small}
\begin{align}
&f_\kappa(x,m,t-s)+\mu f_x(x,m,t-s)+\tfrac{1}{2}\sigma^2f_{xx}(x,m,t-s)=0, \ 0\le m\le x< b(m), \label{eq:special1} \\[-0.8ex]
&f_x(x,m,t-s)=\beta(t-s)c(m), \ b(m)\le x, \label{eq:special2} \\[-0.8ex]
&f_\kappa(x,m,0)+\mu f_x(x,m,0)+\tfrac{1}{2}\sigma^2f_{xx}(x,m,0)\le 0, \ 0\le m \le x, \label{eq:special3} \\[-0.8ex]
&f_x(x,m,0)\ge e^{-qm}, \ 0\le m \le x, \label{eq:special4} \\[-0.8ex]
&f_m(x,m,t-s)|_{x=m}=0, \ m\ge 0, \label{eq:special5} \\[-0.8ex]
&f(0,0,t-s)=0, \quad V(x,m)=f(x,m,0). \label{eq:special6}
\end{align}
\end{small}
Motivated by Eqs. (\ref{eq:special1}) and (\ref{eq:special2}), and the definition of $\beta$ and $c$, we expect that the solution of the above PDE has the form of separation of variables\begin{small}
$$f(x,m,\kappa)=\sum_{i=1}^4A_i(m)e^{\lambda_ix-(\mu\lambda_i+\frac{\sigma^2}{2}\lambda_i^2)\kappa}$$\end{small}
for $0\leq m\leq \overline{m}$ and $m\leq x\leq b(m)$, where $\lambda_1$ and $\lambda_2$ are solutions of the equation $\frac{\sigma^2}{2}\lambda^2+\mu \lambda= \delta$ while $\lambda_3$ and $\lambda_4$ are solutions of the equation $\frac{\sigma^2}{2}\lambda^2+\mu \lambda= \delta+\gamma$. We assume $\lambda_3<\lambda_1<0<\lambda_2<\lambda_4$ from now on. From Eqs. (\ref{eq:special5}) and (\ref{eq:special6}), we deduce
\begin{small}
\begin{align}
&A_1'(m)e^{\lambda_1 m}+A_2'(m)e^{\lambda_2 m}=0, \label{eq:special51}\\
&A_3'(m)e^{\lambda_3 m}+A_4'(m)e^{\lambda_4 m}=0, \label{eq:special52}\\
&A_1(0)+A_2(0)=0, \label{eq:special53}\\
&A_3(0)+A_4(0)=0. \label{eq:special54}
\end{align}
\end{small}
In view of Condition (1) in Theorem \ref{th:verify}, we require $V\in C^{2,1}(\{(x,m)|x\geq m,0\leq m\leq \overline{m}\})$ and $f\in C^{1,1,1}(\{(x,m,\kappa)|0\leq m\leq x,0\leq m\leq \overline{m},\kappa \geq 0\})$. We first consider the smooth fit conditions regarding $x$ on the free boundary $b$. According to Theorem \ref{th:verify} and Remark \ref{remark:connect}, we require
\begin{small}
\begin{align}
&f_x(x,m,\kappa)|_{x=b(m)^+} = f_x(x,m,\kappa)|_{x=b(m)^-}, \label{eq:special21} \\[-0.8ex]
&V_{xx}(x,m)|_{b(m)^+} = f_{xx}(x,m,0)|_{b(m)^+} = f_{xx}(x,m,0)|_{b(m)^-} = V_{xx}(x,m)|_{b(m)^-}\label{eq:special22}
\end{align}
\end{small}
for any $0\leq m\leq \overline{m}$.
Eqs. (\ref{eq:special21}) and (\ref{eq:special22}) are equivalent to
\begin{small}
\begin{align}
&\lambda_1A_1(m)e^{\lambda_1b(m)} + \lambda_2A_2(m)e^{\lambda_2 b(m)} = \rho e^{-qm}, \label{eq:spec21} \\[-0.8ex]
&\lambda_3A_3(m)e^{\lambda_3b(m)} + \lambda_4A_4(m)e^{\lambda_4 b(m)} = (1-\rho)e^{-qm}, \label{eq:spec22} \\[-0.8ex]
&\textstyle \sum_{i=1}^2 \lambda_i^2 A_i(m)e^{\lambda_i b(m)} + \sum_{j=3}^4 \lambda_j^2 A_j(m)e^{\lambda_j b(m)} = 0. \label{eq:spec23}
\end{align}
\end{small}
We find Eqs. (\ref{eq:special51})-(\ref{eq:special54}) and (\ref{eq:spec21})-(\ref{eq:spec23}) form a differential-algebraic equation system; see, e.g.,  \citet*{2006DAE}. We are able to transform it into an ODE system in the following way.

Let $F(m)=\lambda_1^2A_1(m)e^{\lambda_1b(m)}+\lambda_2^2A_2(m)e^{\lambda_2b(m)}$. Using Eq. (\ref{eq:spec21}), we have\begin{small}
\begin{align}\label{eq:spec31}
A_1(m)=\frac{1}{\lambda_1(\lambda_2-\lambda_1)}e^{-\lambda_1 b(m)}(\rho e^{-qm}\lambda_2-F(m)),
\end{align}
\end{small}
\begin{small}
\begin{align}\label{eq:spec32}
A_2(m)=\frac{1}{\lambda_2(\lambda_1-\lambda_2)}e^{-\lambda_2 b(m)}(\rho e^{-qm}\lambda_1-F(m)).
\end{align}
\end{small}
Similarly, we obtain\begin{small}
\begin{align}\label{eq:spec41}
A_3(m)=\frac{1}{\lambda_3(\lambda_4-\lambda_3)}e^{-\lambda_3 b(m)}((1-\rho) e^{-qm}\lambda_4+F(m)),
\end{align}\end{small}
\begin{small}
\begin{align}\label{eq:spec42}
A_4(m)=\frac{1}{\lambda_4(\lambda_3-\lambda_4)}e^{-\lambda_4 b(m)}((1-\rho) e^{-qm}\lambda_3+F(m)).
\end{align}\end{small}
Substituting Eqs. (\ref{eq:spec31}) and (\ref{eq:spec32}) into Eq. (\ref{eq:special53}) yields\begin{small}
\begin{align}\label{eq:spec33}
&F(0)=\rho\frac{\lambda_2^2e^{-\lambda_1b(0)}-\lambda_1^2e^{-\lambda_2b(0)}}{\lambda_2e^{-\lambda_1b(0)}-\lambda_1e^{-\lambda_2b(0)}}. 
\end{align}\end{small}
Analogously, we have\begin{small}
\begin{align}\label{eq:spec34}
&F(0)=-(1-\rho)\frac{\lambda_4^2e^{-\lambda_3b(0)}-\lambda_3^2e^{-\lambda_4b(0)}}{\lambda_4e^{-\lambda_3b(0)}-\lambda_3e^{-\lambda_4b(0)}}.
\end{align}\end{small}
Substituting Eqs. (\ref{eq:spec31})-(\ref{eq:spec42}) into Eqs. (\ref{eq:special51}) and (\ref{eq:special52}) yields\begin{small}
\begin{align}\label{eq:ODEFb}
X'
\begin{bmatrix}
b'(m) \\ F'(m)
\end{bmatrix}
=
\begin{bmatrix}
X_{11}(m) & X_{12}(m)  \\
X_{21}(m) &  X_{22}(m)
\end{bmatrix}
\begin{bmatrix}
b'(m) \\ F'(m)
\end{bmatrix}
=
\begin{bmatrix}
R_1(m) \\
R_2(m)
\end{bmatrix},
\end{align}\end{small}
where 
\begin{small}
\begin{align} \label{eq:XR}
&X_{11}(m) = \lambda_1\lambda_2 \big[ \textstyle \rho e^{-qm} (\lambda_1 e^{\lambda_2(m-b(m))} \!-\! \lambda_2 e^{\lambda_1(m-b(m))}) + (e^{\lambda_1(m-b(m))} \!-\! e^{\lambda_2(m-b(m))}) F \big],  \\[-0.0ex]
&X_{12}(m) = \lambda_1 e^{\lambda_2(m-b(m))} - \lambda_2 e^{\lambda_1(m-b(m))}, \notag \\[-0.0ex]
&X_{21}(m) \!\!=\!\! \lambda_3\lambda_4 \big[ \textstyle (1\!-\!\rho)e^{-qm} (\lambda_3 e^{\lambda_4(m-b(m))} -\! \lambda_4 e^{\lambda_3(m-b(m))}) - (e^{\lambda_3(m-b(m))} \!-\! e^{\lambda_4(m-b(m))}) F \big], \notag \\[-0.0ex]
&X_{22}(m) = \lambda_4 e^{\lambda_3(m-b(m))} - \lambda_3 e^{\lambda_4(m-b(m))}, \notag \\[-0.0ex]
&R_1(m) = \rho q e^{-qm} (\lambda_2^2 e^{\lambda_1(m-b(m))} - \lambda_1^2 e^{\lambda_2(m-b(m))}), \notag \\[-0.0ex]
&R_2(m) = (1-\rho) q e^{-qm} (\lambda_4^2 e^{\lambda_3(m-b(m))} - \lambda_3^2 e^{\lambda_4(m-b(m))}).\notag
\end{align}
\end{small}
Next, we prove the existence and uniqueness of a local solution for this ODE system and utilize it to construct an equilibrium.

Eqs. (\ref{eq:spec33}) and (\ref{eq:spec34}) indicate that $b(0)$ is a solution of the equation $G(b)=0$, where $G$ is\begin{small}
\begin{align}\label{eq:G}
G(b)=\rho\frac{\lambda_2^2e^{-\lambda_1b}-\lambda_1^2e^{-\lambda_2b}}{\lambda_2e^{-\lambda_1b}-\lambda_1e^{-\lambda_2b}}+(1-\rho)\frac{\lambda_4^2e^{-\lambda_3b}-\lambda_3^2e^{-\lambda_4b}}{\lambda_4e^{-\lambda_3b}-\lambda_3e^{-\lambda_4b}}.
\end{align}\end{small}
The function $G$ is strictly monotonic as $G'(b)<0$ for any $b\in \R$. By the Intermediate Value Theorem, it follows that \begin{small}$b(0)\in\left(\frac{\ln(\frac{\lambda_3}{\lambda_4})^2}{\lambda_4-\lambda_3},\frac{\ln(\frac{\lambda_1}{\lambda_2})^2}{\lambda_2-\lambda_1}\right)$\end{small}. Consequently, we obtain $b(0)>0$ and hence $F(0)<0$.

By direct calculation, we have $X_{11}(0)>0$, $X_{12}(0)<0$, and $X_{22}(0)>0$. Besides,
\begin{align}\label{eq:F(0)}
F(0)+(1-\rho)\lambda_4>0
\end{align}
holds because of Eq. (\ref{eq:spec34}) and the fact that \begin{small}$b\mapsto \frac{\lambda_4^2e^{-\lambda_3b}-\lambda_3^2e^{-\lambda_4b}}{\lambda_4e^{-\lambda_3b}-\lambda_3e^{-\lambda_4b}}$\end{small} is increasing with respect to $b$ on $[0,\infty)$. Hence, we have \begin{small}$$X_{21}(0)>-(\lambda_3\lambda_4)[(1-\rho)\lambda_4+F(0)](e^{-\lambda_3b(0)}-e^{-\lambda_4b(0)})>0,$$\end{small}
and thus $\det(X)=X_{11}X_{22}-X_{21}X_{12}>0$.

Due to the continuity of $b$ and $F$, we have $\det(X)$, which can be viewed as a function of  $(m,b,F)$, has a common lower bound $C_X>0$ in some region $R_{m,b,F}:=[0,\epsilon_1]\times[b(0)-h_1,b(0)+h_1]\times [F(0)-h_2,F(0)+h_2]$, where $\epsilon_1,h_1,h_2>0$ are constants. We are able to choose small enough $\epsilon_1>0$ and $h_1>0$ satisfying $b(0)-h_1>\epsilon_1$. As $\det(X)$ has a lower bound, we can easily verify that each element of the matrix $Y=X^{-1}$ fulfills the locally Lipschitz condition with respect to $b$ and $F$ in $R_{m,b,F}$. Hence, Eqs. (\ref{eq:spec33})-(\ref{eq:XR}) admit a unique solution in some closed region $R'_{m,b,F}=[0,\epsilon_2]\times [b(0)-h_3,b(0)+h_3]\times[F(0)-h_4,F(0)+h_4]\subset R_{m,b,F}$, where $\epsilon_2,h_3,h_4>0$. Based on these results, we then prove the existence of the equilibrium under certain parameter conditions.
\begin{proposition}\label{prop:spec}
Assume $0<\overline{m}<\epsilon_2$ and \begin{small}$$0\leq \rho< \frac{-\mu +\sqrt{\mu^2+2\sigma^2(\delta+\gamma)}}{\sqrt{\mu^2+2\sigma^2\delta}+\sqrt{\mu^2+2\sigma^2(\delta+\gamma)}},$$\end{small}
then Eqs. (\ref{eq:spec33})-(\ref{eq:XR}) admit a unique solution $(b,F)$ for $m\in [0,\overline{m}]$. Moreover, for this same $b$, Eqs. (\ref{eq:special51})-(\ref{eq:special54}) and (\ref{eq:spec21})-(\ref{eq:spec23}) also admit a unique solution $(A_1,A_2,A_3,A_4,b)$ on $m\in [0,\overline{m}]$.
Define $m^*=b(\overline{m})$, and for $m> \overline{m}$, define $b(m)=b(\overline{m})$, $F(m)=F(\overline{m})$, and $A_i(m)=A_i(\overline{m})$. We have
\begin{small}
\begin{align}
V(x,m)=\left\{ \begin{aligned}
&\textstyle \sum_{i=1}^4 A_i(m)e^{\lambda_i x}, && \text{\scriptsize $0\le m\le \overline{m}, m\le x\le b(m),$} \\
&\textstyle \sum_{i=1}^4 A_i(\overline{m})e^{\lambda_i x}, && \text{\scriptsize $\overline{m}< m\le m^*, m\le x \le b(m),$} \\
&(x\!-\!b(m))e^{-qm} + \textstyle \sum_{i=1}^4 A_i(m)e^{\lambda_i b(m)}, && \text{\scriptsize $0\le m\le \overline{m}, b(m)<x,$} \\
&(x\!-\!b(\overline{m}))e^{-q\overline{m}} + \textstyle \sum_{i=1}^4 A_i(\overline{m})e^{\lambda_i b(\overline{m})}, && \text{\scriptsize $\overline{m}< m, b(m)\vee m\le x,$}
\end{aligned} \right.
\end{align}
\end{small}
is an equilibrium satisfying the conditions of Definition \ref{def:equi}. The corresponding $f$ is given by\begin{small}
$$f(x,m,\kappa)=
\begin{cases}
f_1(x,m,\kappa), &0\leq m\leq \overline{m},m\leq x\leq b(m), \\[6pt]
f_2(x,m,\kappa), & \overline{m}< m\leq m^*,m\leq x \leq b(m), \\[6pt]
f_3(x,m,\kappa), & 0\leq m\leq \overline{m},b(m)<x, \\[6pt]
f_4(x,m,\kappa), &\overline{m}< m,b(m)<x ,
\end{cases}
$$\end{small}
where \begin{small}
\begin{align*}
f_1(x,m,\kappa) &= \textstyle \sum_{i=1}^4 A_i(m) e^{\lambda_i x - (\mu\lambda_i + \frac{1}{2}\sigma^2\lambda_i^2)\kappa}, \
f_2(x,m,\kappa) = \textstyle \sum_{i=1}^4 A_i(\overline{m}) e^{\lambda_i x - (\mu\lambda_i + \frac{1}{2}\sigma^2\lambda_i^2)\kappa}, \\[-0.8ex]
f_3(x,m,\kappa) &= (x\!-\!b(m))e^{-qm} \big( \rho e^{-\delta\kappa} \!+\! (1\!-\!\rho)e^{-(\delta+\gamma)\kappa} \big) + \textstyle \sum_{i=1}^4 A_i(m) e^{\lambda_i b(m) - (\mu\lambda_i + \frac{1}{2}\sigma^2\lambda_i^2)\kappa}, \\[-0.8ex]
f_4(x,m,\kappa) &\!\!=\! (x\!-\!b(\overline{m}))e^{-q\overline{m}} \big( \rho e^{-\delta\kappa} \!+\! (1\!-\!\rho)e^{-(\delta+\gamma)\kappa} \big) \!\! +\!\! \textstyle \sum_{i=1}^4 A_i(\overline{m}) e^{\lambda_i b(\overline{m}) - (\mu\lambda_i + \frac{1}{2}\sigma^2\lambda_i^2)\kappa}.
\end{align*}
\end{small}
\end{proposition}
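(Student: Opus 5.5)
The plan is to verify the hypotheses of Theorem \ref{th:verify}, using the alternative integrability conditions of Proposition \ref{prop:f^s} and Remark \ref{remark:int} in place of Condition (2). The candidate is the pair $(V,f)$ written in the statement.

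\textbf{Solving the DAE.} First I settle the ODE. By the local existence and uniqueness argument preceding the proposition, the system (\ref{eq:ODEFb}) with initial data $b(0)$ (the unique root of $G$) and $F(0)$ from (\ref{eq:spec33}) has a unique solution on $[0,\epsilon_2]$. Since $\overline{m}<\epsilon_2$, this solution covers $[0,\overline{m}]$. I then define $A_1,\dots,A_4$ through (\ref{eq:spec31})--(\ref{eq:spec42}).

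Next I check the DAE equations one by one:
\begin{itemize}
\item (\ref{eq:spec21}) and (\ref{eq:spec22}) hold by construction of the $A_i$.
\item (\ref{eq:spec23}) holds because the two groups contribute $F$ and $-F$.
\item (\ref{eq:special53}) and (\ref{eq:special54}) are exactly the initial conditions (\ref{eq:spec33}) and (\ref{eq:spec34}).
\item (\ref{eq:special51}) and (\ref{eq:special52}) are equivalent to (\ref{eq:ODEFb}).
\end{itemize}
Uniqueness of $(A_i,b)$ follows because any solution of the DAE produces, through the same $F$, a solution of the ODE.

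I also need the boundary to stay in the right place. Shrinking $\epsilon_2$ if needed, $b(m)\ge b(0)-h_3>\epsilon_1\ge m$, so $b(m)>m$ on $[0,\overline{m}]$. Since $b$ is $C^1$, I can ensure $|b'|<1$ on this interval. Extending $b$ as a constant for $m>\overline m$ keeps $b$ Lipschitz with constant $L_m<1$. Then Proposition \ref{prop:Skorokhod} (with $L_d=0$, bounded $\mu,\sigma$, and exponentially decaying $\beta$) shows that $\hat\Xi=(W,P)$ is an admissible singular control law. This gives Condition (4).

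\textbf{Regularity and the equalities.} For Conditions (1) and (3), I check that $f_1,\dots,f_4$ paste in a $C^1$ way:
\begin{itemize}
\item Across $x=b(m)$, the values match by the construction of $f_3$. The first $x$-derivatives match by (\ref{eq:spec21}) and (\ref{eq:spec22}), after splitting $\beta$ into its $\rho$ and $(1-\rho)$ parts with their separate exponentials in $\kappa$. The second $x$-derivatives of $V$ match by (\ref{eq:spec23}).
\item Continuity of the $m$-derivative across $x=b(m)$ follows from (\ref{eq:special51}) and (\ref{eq:special52}) together with the $x$-smooth fit.
\item At $m=\overline{m}$ the $A_i$ are frozen. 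Hence $f$ is only piecewise $C^1$ in $m$ there, which suffices because $M$ decreases and so $M$ never crosses back upward.
\end{itemize}

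The equalities then follow:
\begin{itemize}
\item (\ref{eq:ver4}) holds termwise, since $\kappa=t-s$ and each exponential solves the PDE (\ref{eq:special1}).
\item (\ref{eq:ver5}) is the linear-in-$x$ form of $f_3$ and $f_4$.
\item (\ref{eq:ver6}) is (\ref{eq:special51}) and (\ref{eq:special52}).
\item (\ref{eq:ver7}) follows from (\ref{eq:special53}) and (\ref{eq:special54}).
\end{itemize}

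\textbf{Variational inequality.} The substantive step is (\ref{eq:ver1}). In $P$, that is for $x>b(m)$, I need
\[
\mathcal{A}V+H-f_s(\cdot,t)\le 0 .
\]
Using $f_s=-f_\kappa$, this becomes an explicit expression that is linear in $x$. Its slope in $x$ has the sign of
\[
-\bigl(\rho\delta+(1-\rho)(\delta+\gamma)\bigr)e^{-qm}<0,
\]
and its value at $x=b(m)$ is $0$ by (\ref{eq:spec23}). The inequality therefore holds throughout $P$.

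In $W$, I need $V_x\ge e^{-qm}$. Writing $V_x=\sum\lambda_iA_ie^{\lambda_i x}$, I would show that $V_{xx}<0$ on $[m,b(m))$. Since $V_{xx}(b(m))=0$ and $V_x(b(m))=e^{-qm}$, this gives the result. This is where the upper bound on $\rho$ enters. At $m=0$, I would check the sign of $V_{xxx}$ at $b(0)$ together with the convexity structure of the sum of exponentials. The condition on $\rho$, which reads $\rho\sqrt{\mu^2+2\sigma^2\delta}<(1-\rho)\lambda_4\sigma^2/2$-type, should make the $(1-\rho)$-group dominate so that $V_{xx}$ has a single zero. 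Continuity then extends the strict inequality to small $m\le\overline{m}$. \emph{I expect this step to be the main obstacle:} controlling the sign of a four-term exponential sum using only the constraint on $\rho$, uniformly for $m\in[0,\overline m]$.

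\textbf{Integrability and conclusion.} The growth bound (\ref{eq:module}) follows because the $A_i$ are bounded and the exponents $-(\mu\lambda_i+\tfrac12\sigma^2\lambda_i^2)\kappa=-\delta\kappa$ or $-(\delta+\gamma)\kappa$ are bounded. Linear growth of $c$ is immediate, and $a\equiv0$ is bounded. Condition (5) follows from boundedness of $V$ and $f$ on $W$, linear growth in $P$, and the fact that $X$ is bounded by $b$ under $\hat{\mathrm D}$. Finally, Proposition \ref{prop:f^s} and Remark \ref{remark:int} yield the equilibrium.
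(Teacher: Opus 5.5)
\textbf{The gap is the variational inequality in $W$.} You correctly reduce the $W$-part of (\ref{eq:ver1}) to showing $V_{xx}(\cdot,m)\le 0$ on $[m,b(m)]$, using $V_{xx}(b(m),m)=0$ from (\ref{eq:spec23}) and $V_x(b(m),m)=e^{-qm}$ from (\ref{eq:spec21})--(\ref{eq:spec22}). Your treatment of the $P$-part matches the paper's computation. But the $W$-part is the substantive step, and you give no argument for it. Moreover, the route you sketch would not prove the proposition as stated.

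At $m=0$ no condition on $\rho$ is needed. Indeed, (\ref{eq:special53})--(\ref{eq:special54}) together with (\ref{eq:spec21})--(\ref{eq:spec22}) give $A_1(0)=-A_2(0)\le 0$ and $A_3(0)=-A_4(0)<0$. Hence every $\lambda_i^3A_i(0)\ge 0$, so $V_{xx}(\cdot,0)$ is increasing and negative on $[0,b(0))$. Extending this by continuity in $m$ works only for $\overline m$ below an unquantified threshold: the inequality degenerates at the moving endpoint $x=b(m)$, so you need uniform control of $V_{xxx}$ there. The hypothesis $\overline m<\epsilon_2$ does not supply that threshold. And in this route the bound on $\rho$ is never used, contrary to your own expectation that this is where it enters.

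\textbf{The missing idea is a sign lemma for the coefficients that holds on all of $[0,\overline m]$.} The hypothesis on $\rho$ is exactly $\rho\lambda_1+(1-\rho)\lambda_4>0$.
\begin{itemize}
\item By (\ref{eq:spec41}), $A_3(m)=0$ if and only if $F(m)=-(1-\rho)\lambda_4e^{-qm}$.
\item By (\ref{eq:spec32}), $A_2(m)\ge 0$ if and only if $F(m)\ge\rho\lambda_1e^{-qm}$.
\item Since $A_3(0)<0$, a zero of $A_3$ on $[0,\overline m]$ together with $A_2\ge 0$ would force $\rho\lambda_1+(1-\rho)\lambda_4\le 0$. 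Hence $A_3\le 0$ on $[0,\overline m]$.
\end{itemize}
Now use $A_2,A_4\ge 0$, as the paper does, and split into two cases.
\begin{itemize}
\item If $A_1(m)\le 0$, then $V_{xx}(\cdot,m)=\sum_i\lambda_i^2A_i(m)e^{\lambda_i x}$ is nondecreasing in $x$, because every $\lambda_i^3A_i\ge 0$.
\item If $A_1(m)>0$, then $e^{-\lambda_3 x}V_{xx}=\sum_i\lambda_i^2A_ie^{(\lambda_i-\lambda_3)x}$ is nondecreasing. The $i=3$ term is constant, and $\lambda_i-\lambda_3>0$ for the others.
\end{itemize}
In either case $V_{xx}(\cdot,m)\le 0$ on $[m,b(m)]$ for each $m$ separately. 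This needs no continuity argument and no further restriction on $\overline m$.

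\textbf{A smaller instance of the same issue.} You secure $L_m<1$ for Proposition \ref{prop:Skorokhod}(a) by shrinking the interval again. The paper instead builds the reflected process pathwise, using frozen-boundary reflections that alternate between $S_1$ and $S_2$. It invokes Proposition \ref{prop:Skorokhod} only for the integrability estimate, where $L_m<1$ is not needed.
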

\begin{proof}
We verify that each condition of Proposition \ref{prop:f^s} holds to demonstrate that $V$ is an equilibrium:

\textbf{Part I. The equation part of the PDE and Condition (1) of Theorem \ref{th:verify}.} First, $\overline{m}<m^*$ holds\footnote{For the case $\overline{m}\geq m^*$, there exists a similar proof, but the forms of $V$ and $f$ are different.} because we have let $b(0)-h_1>\epsilon_1$. The equivalence of system Eqs.  (\ref{eq:spec33})-(\ref{eq:XR}) and the system Eqs. (\ref{eq:special51})-(\ref{eq:special54}) and (\ref{eq:spec21})-(\ref{eq:spec23}) is easily verified through Eqs. (\ref{eq:spec31})-(\ref{eq:spec34}).

According to the definitions of $V$ and $f$, we can verify Eqs. (\ref{eq:special1}), (\ref{eq:special2}), (\ref{eq:special5}), and (\ref{eq:special6}) by direct calculation. Additionally, Eqs. (\ref{eq:special21}) and (\ref{eq:special22}) hold because $b(m)=b(\overline{m}),F(m)=F(\overline{m})$ when $m\geq \overline{m}$.

We then verify Condition (1) in Theorem \ref{th:verify}. Calculation shows that Eqs. (\ref{eq:special51})-(\ref{eq:special54}) and (\ref{eq:spec21})-(\ref{eq:spec23}) hold on $(\overline{m},\infty)$. The required regularity of $V_x$, $V_{xx}$ and $f_x$ is then guaranteed by these equations. Then, we only need to verify the regularity conditions of $m$ and $\kappa$. For $f_\kappa$, 
we have
\begin{small}
\begin{align*}
f_\kappa(x,m,\kappa)|_{b(m)^+} &= -\textstyle \sum_{i=1}^4 \! \big(\mu\lambda_i \!+\! \frac{1}{2}\sigma^2\lambda_i^2\big) A_i(m) e^{\lambda_i x - (\mu\lambda_i + \frac{1}{2}\sigma^2\lambda_i^2)\kappa} \\[-0.6ex]
&= f_\kappa(x,m,\kappa)|_{b(m)^-}
\end{align*}
\end{small}
holds, and we can similarly prove the cases where $m>\overline{m}$. The regularity conditions for $V_m$ and $f_m$ on $[0,\overline{m})\cup (\overline{m},\infty)$ are obviously satisfied (noting the smooth fit condition on $b$ is not necessary for $V_m$ and $f_m$). Hence, we have $V\in C^{2,1}([0,\infty)\times[0,\overline{m}])\cap C^{2,1}([0,\infty)\times[\overline{m},\infty))$ and $f\in C^{2,1,1}([0,\infty)\times[0,\overline{m}]\times[0,\infty))\cap C^{2,1,1}([0,\infty)\times[\overline{m},\infty)\times[0,\infty))$. Therefore, in Theorem \ref{th:verify}, all instances where the It\^{o}-Meyer-Tanaka formula is used can be replaced by using Theorem 3.1 in \citet*{PG2005} to obtain the same results ( Informally, one can verify $x\mapsto V(x,x)$ and $x\mapsto f(x,x,\kappa)$ are $C^1$, and $M$ only varies when $X=M$. Thus, the It\^{o}-Tanaka-Meyer formula holds\footnote{The reason why $V$ loses the $C^1$ regularity is that $c$ is not $C^1$. This can be interpreted as the company suddenly changing its dividend strategy when facing a major crisis according to the dividend smoothing theory and dividend signaling theory; e.g., see \citet*{K2021}.}).

\textbf{Part II.  Conditions (4) and (5) of Theorem \ref{th:verify}.}
We prove the Skorokhod reflection problem with boundary $b$ admits a unique solution. Consider the Skorokhod problem defined by the equation\begin{small}
\begin{align}
d_\kappa = x_\kappa - \sup_{t \le r \le \kappa} (x_r - b(M_r^d))^+, \quad M_\kappa^d = m\wedge \inf_{t \le r \le \kappa} d_r.
\end{align}\end{small}
Define the following closed regions\begin{small}
\begin{align*}
& S_1 = \{ (x, m) \mid 0 \le m \le \overline{m}, \, m \le x \le \frac{m + 2b(m)}{3} \},\\
&S_2 = \{ (x, m) \mid 0 \le m \le \overline{m}, \, \frac{m + 2b(m)}{3} \le x \le \frac{2 m + b(m)}{3} \}.
\end{align*}\end{small}
For a fixed $m \in (\overline{m}, m^*)$, define a sequence of stopping times $\{\tau_n\}_{n \in \mathbb{N}}$. Let $\tau_1 = \inf \{ r \ge t \mid (d_r, M_r^d) \in S_1 \}$. For $n \ge 1$, define recursively\begin{small}
\begin{align*}
&\tau_{2n} = \inf \{ r \ge \tau_{2n-1} \mid (d_r, M_r^d) \in S_2 \}, \\
&\tau_{2n+1} = \inf \{ r \ge \tau_{2n} \mid (d_r, M_r^d) \in S_1 \}.
\end{align*}\end{small}
We construct the solution $(d, k)$ as follows.\\
1. On $[t, \tau_1]$, let $(d, k)$ be the solution to the standard Skorokhod problem with a fixed boundary $b(m^*)$.\\
2. On $[\tau_{2n-1}, \tau_{2n}]$, the process evolves following $\{x_r\}_{r\geq t}$: $d_r = d_{\tau_{2n-1}} + x_r - x_{\tau_{2n-1}}$ and $k_r = k_{\tau_{2n-1}}$.\\
3. On $[\tau_{2n}, \tau_{2n+1}]$, let $(d, k)$ be the solution to the Skorokhod problem with a fixed boundary frozen at $b(M_{\tau_{2n}}^d)$.

By the existence of solutions for fixed-boundary Skorokhod problems; see, e.g. \citet*{KS1991}, the concatenation of these paths yields a continuous solution for the moving boundary problem.

Assume that there exist two distinct solutions $d$ and $d'$ for the same driving process $x_t$ and initial condition $x_0$. Let $t_1 = \inf \{ r \ge 0 \mid d_r \neq d'_r \}$ be the first time of divergence.
By continuity, we have $d_{t_1} = d'_{t_1}$ and $M_{t_1}^d = M_{t_1}^{d'}$. For some $t_2 > t_1$, the paths $(d_r, M_r^d)$ and $(d'_r, M_r^{d'})$ remain in region $S_1$ for all $r \in [t_1, t_2]$.

In $S_1$, the reflection term $\sup (x_r - b(M_r^d))^+$ is locally constant. Specifically, the reflection term $k_t$ only increases when the process hits the boundary. As the paths are within $S_1$, the reflection equation reduces to $d_T = x_T + C$,
where $C$ is a constant determined by the state at $t_1$. As $d_{t_1} = d'_{t_1}$ and both follow the same increment $x_T - x_{t_1}$, it follows that $d_r = d'_r$ on $[t_1, t_2]$. This contradicts the definition of $t_1$ as the divergence time. Thus, the solution is unique. Consequently, for any $\omega\in \Omega$ and the continuous input $x_r=\mu t+\sigma B_t(\omega)$, using the above results, we obtain that the Skorokhod reflection problem (\ref{eq:Skorokhod}) 
has a unique strong solution, denoted by $(\{X_{r}\}_{r\geq t},\{M_{r}\}_{r\geq t},\{\hat{\mathrm{D}}_r\}_{r\geq t})$.

Then, using Proposition \ref{prop:Skorokhod} (in the proof, for the integrability condition to hold, $L_m<1$ is unnecessary) yields \begin{small}
$
\E_{x,m,y,t}\Big[\int_0^{\tau_a^{\hat{\mathrm{D}}}}\left|\beta(r-s)c(X_r^{{\hat{\mathrm{D}}}},M_r^{\hat{{\mathrm{D}}}},\hat{\mathrm{D}}_r,r)\right|\diamond\d \hat{\mathrm{D}}_r\Big]<\infty $. \end{small}
Additionally, following the proof of Proposition \ref{prop:f^s}, we obtain Eq. (\ref{eq:Dforstep3}).

The fact $\lim\limits_{s\to t}f(x,m,t-s)=V(x,m)$ enables us to prove an alternative to Eqs. (\ref{eq:dominate}) and (\ref{eq:dominate2}) to realize \textbf{Step 2} and \textbf{Step 3} in the proof of Theorem \ref{th:verify}, as stated in Remark \ref{remark:int}. Specifically, 
as $b$ is bounded, we have $X_t^{\hat{\mathrm{D}}}$ is bounded for any $t\geq s$. Because $f$ and $b$ are continuous and $\{(x,m)|0\leq m \leq m^*,m\leq x\leq b(m)\}$ is bounded, we obtain $|f(x,m,\kappa)|\leq C(1+x)e^{-\delta \kappa}, \quad \forall 0\leq m\leq x$ holds for some $C>0$. Thus, the boundedness of $X_r^{\hat{\mathrm{D}}}$ yields $\mathbb{E}_{x,m,y,t}\Big[ \sup_{r \in [t, \tau_a^{\hat{\mathrm{D}}}]} \left| f^s(X_r^{\hat{\mathrm{D}}}, M_r^{\hat{\mathrm{D}}},r) \right|\Big] < \infty$ for any $s<t$. Then, we have \begin{small}$$\limsup_{T \to \infty} \mathbb{E}_{x,m,y,t}\left[ \left| f^s(X_T^{\hat{\mathrm{D}}}, M_T^{\hat{\mathrm{D}}},  T) \right| \cdot\mathbf{1}_{\{T < \tau_a^{\hat{\mathrm{D}}}\}} \right] = 0$$\end{small} holds for $s< t$ using the DCT. Hence, we obtain Eq. (\ref{eq:probpre}) for $s<t$ following the proof of \textbf{Step 1}. Then, according to Remark \ref{remark:int}, we prove Condition (5) of Theorem \ref{th:verify}.

\textbf{Part III. Conditions (1)-(3) of Proposition \ref{prop:f^s} and the variational inequality part of the PDE.} Because  Conditions (1)-(3) in Proposition \ref{prop:f^s} are straightforward to verify, we only need to verify Eq. (\ref{eq:ver1}), or equivalently, Eqs. (\ref{eq:special3}) and (\ref{eq:special4}). Then, all the assumptions (or equivalent conditions) of Proposition \ref{prop:f^s} are satisfied, and we have $W$ given by $W=\{(x,m,t)| 0\leq m \leq m^*,m\leq x\leq b(m),t\in [0,\infty)\}$, $\{\hat{\mathrm{D}}_r\}_{r\geq t}$ is the equilibrium singular control generated by $P=\{(x,m)|0\leq m\leq x \leq b(m)\}$, while $V$ is the equilibrium payoff functional.

In fact,  Eq. (\ref{eq:special3}) follows from straightforward calculation and repeated use of Eqs. (\ref{eq:spec21})-(\ref{eq:spec23}). Specifically, we have
\begin{small}
\begin{align*}
&(f_\kappa + \mu f_x + \tfrac12 \sigma^2 f_{xx})(x,m,0) \\[-1.2ex]
&\quad= -e^{-qm}(x-b(m))\bigl(\rho\delta+(1-\rho)(\delta+\gamma)\bigr) - \mu\sum_{i=1}^4 \lambda_i A_i(m) e^{\lambda_i b(m)} \\[-1.4ex]
&\qquad - \frac{\sigma^2}{2}\sum_{i=1}^4 \lambda_i^2 A_i(m) e^{\lambda_i b(m)} + \mu e^{-qm} \\[-1.2ex]
&\quad= -e^{-qm}(x-b(m))\bigl(\rho\delta+(1-\rho)(\delta+\gamma)\bigr) \leq 0, \qquad \forall m \in [0, \overline{m}], \\[0.5ex]
&(f_\kappa + \mu f_x + \tfrac12 \sigma^2 f_{xx})(x,m,0) \\[-1.2ex]
&\quad= -e^{-q\overline{m}}(x-b(\overline{m}))\bigl(\rho\delta+(1-\rho)(\delta+\gamma)\bigr) - \mu\sum_{i=1}^4 \lambda_i A_i(\overline{m}) e^{\lambda_i b(\overline{m})} \\[-1.4ex]
&\qquad - \frac{\sigma^2}{2}\sum_{i=1}^4 \lambda_i^2 A_i(\overline{m}) e^{\lambda_i b(\overline{m})} + \mu e^{-q\overline{m}} \\[-1.2ex]
&\quad= -e^{-q\overline{m}}(x-b(\overline{m}))\bigl(\rho\delta+(1-\rho)(\delta+\gamma)\bigr) \leq 0, \qquad \forall m \in (\overline{m}, \infty).
\end{align*}
\end{small}
To verify Eq. (\ref{eq:special4}), we first show that $A_3(m)\le 0$ for all $m\ge 0$. Suppose, for contradiction, that $A_3(m)\ge 0$ for some $m\ge 0$. Because Eqs. (\ref{eq:special54}) and (\ref{eq:spec22}) give $A_3(0)<0$, the Intermediate Value Theorem yields a point $\overline{m}\ge 0$ such that $A_3(\overline{m})=0$. Substituting $A_3(\overline{m})=0$ into Eqs. (\ref{eq:spec21})-(\ref{eq:spec23}) and using the fact that $A_2(\overline{m})\geq 0$ we obtain an inequality $\rho\lambda_1+(1-\rho)\lambda_4\leq0$, however, this is a contradiction to\begin{small} $$0\leq \rho< \frac{-\mu +\sqrt{\mu^2+2\sigma^2(\delta+\gamma)}}{\sqrt{\mu^2+2\sigma^2\delta}+\sqrt{\mu^2+2\sigma^2(\delta+\gamma)}},$$\end{small} which implies $\rho \lambda_1+(1-\rho)\lambda_4 > 0$. Hence, we conclude that $A_3(m) \le 0$ for all $m \in [0, \overline{m}]$, and consequently for every $m \ge 0$.

According to Eqs. (\ref{eq:spec21})-(\ref{eq:spec23}), as $\lambda_1,\lambda_3<0$ and $\lambda_2,\lambda_4>0$, for a fixed $m\geq 0$, we have $A_2(m)\geq 0$, and $A_4(m)\geq 0$. Besides, either $A_1(m)\leq 0$ or $A_3(m)\leq 0$ or both $A_1(m)>0$ and $A_3(m)\leq 0$ holds. 
Define \begin{small}$$U(x,m):=\sum_{i=1}^4\lambda_iA_i(m)e^{\lambda_ix}-e^{-qm}.$$\end{small} Fix $m\in[0,\overline{m}]$. On one hand, if $A_1(m)\leq 0$, the function $U_x$ is obviously increasing on $[0,b(m)]$ with respect to $x$. We thus obtain $U_x(x,m)\leq U_x(b(m),m)=0$. Hence, $U(x,m)\geq U(b(m),m)=0$ for any $m\leq x\leq b(m)$. On the other hand, if $A_1(m)>0$, because $\lambda_3<\lambda_1<0<\lambda_2<\lambda_4$, we have $\hat{U}(x,m)=e^{-\lambda_3x}U_x(x,m)$ is increasing with respect to $x$, indicating $U_x(x,m)\leq e^{-\lambda_3 (b(m)-x)}U_x(b(m),m)=0$ and $U(x,m)\geq U(b(m),m)=0$ for any $0\leq m\leq m^*,m\leq x\leq b(m)$. Thus, Eq. (\ref{eq:special4}) holds. Therefore, we have proved Eqs. (\ref{eq:special3}) and (\ref{eq:special4}).

Combining the results of the three parts, we obtain all the premises of Proposition \ref{prop:f^s}, namely, Conditions (1), (3), (4), and (5) of Theorem \ref{th:verify}, and Conditions (1) and (2) of Proposition \ref{prop:f^s}. Therefore, we complete the proof.
\end{proof}
\begin{remark}
According to \citet*{AMG2013}, assuming $0<\overline{m}<\epsilon_2$ is economically valid since firms rarely alter dividend policies before severe crises, implying a small $\overline{m}$. However, the restrictions on $\overline{m}$ and $\rho$ in Proposition \ref{prop:spec} can potentially be relaxed. For $\overline{m}>\epsilon_2$ (including $\overline{m}=\infty$), if the $2\times 2$ matrix $X$ in Eq. (\ref{eq:ODEFb}) is invertible, Picard's theorem guarantees the existence and uniqueness of the solution $(b,F)$ with a maximal existence interval $[0,\infty)$ for $b$. Nevertheless, directly verifying the invertibility of $X$ is challenging due to the highly nonlinear, coupled structure of the equations. Furthermore, the condition on $\rho$ is merely sufficient for the associated variational inequality and may not be necessary. These observations are numerically illustrated in Subsection \ref{subsec:num}.

Through numerical experiments, \citet*{CZ2025} report that under certain parameter configurations, the required variational inequality fails, leading to the non-existence of a time-independent equilibrium. This phenomenon is caused by the nonlinear expectations inherent in the mean-variance problem. In contrast, for our non-exponential discounting example, extensive numerical testing (covering $|\lambda_i|<10\,(i=1,2,3,4)$, $0<\rho<1$, and $0<q<5$) has not yielded any instance where an equilibrium fails to exist.
\end{remark}
\begin{remark}
When $\rho=0$ or $\rho=1$, the framework reduces to a time-consistent singular control model. In this case, the equilibrium payoff functional $V$ coincides with the standard optimal value function, and Eq. (\ref{eq:ver1}) can be verified to hold by direct calculation. For further details, we refer to \citet*{FR2025}.

If $q=0$, the model does not depend on the running minimum process. A barrier solution exists with $b\equiv x^*$, and the corresponding coefficients $A_i$ $(i=1,2,3,4)$ become constants. Moreover, $V$ remains an equilibrium payoff functional as $A_3=A_3(0)<0$. Specifically, the unique solution $b^*$ of the equation
\begin{small}$$\rho\frac{\lambda_2^2e^{-\lambda_1b}-\lambda_1^2e^{-\lambda_2b}}{\lambda_2e^{-\lambda_1b}-\lambda_1e^{-\lambda_2b}}+(1-\rho)\frac{\lambda_4^2e^{-\lambda_3b}-\lambda_3^2e^{-\lambda_4b}}{\lambda_4e^{-\lambda_3b}-\lambda_3e^{-\lambda_4b}}=0$$\end{small}
exists, and the free boundary is given by $b=b^*$.
In this case, the model reduces to the pseudo-exponential problem without the running minimum process. Therefore, our example also supports the verification theorem for the time-inconsistent singular control problem; e.g., see \citet*{LLY2024}.

Moreover, when $\rho=q=0$, the equilibrium payoff in Proposition \ref{prop:spec} coincides with the value function for the classical one-dimensional (De Finetti) optimal dividend problem.
\end{remark}
\begin{remark}\label{remark:nec}
We point out that Proposition \ref{prop:spec} satisfies all the conditions of Theorem \ref{th:nec}. This example does not satisfy Condition (1) with respect to $m$ only when $m=\overline{m}$ and $x>\overline{m}$, which does not affect the proof and the result of the theorem. Condition (2) is verified by imitating the proof of Proposition \ref{prop:f^s} (and, Eqs. (\ref{eq:ver1})-(\ref{eq:ver7}) do not require the existence of $V_m$ and $f_m$ for $x>m$). The uncontrolled drift-diffusion dynamics $X_t=\mu \d t+\sigma\d B_t$ in this section satisfy Condition (4). In fact, because \begin{small}$\E\left[\inf_{0\leq r\leq t}\left\{B_r\right\}-B_0\right]=-\sqrt{\frac{2t}{\pi}}$\end{small}, we have \begin{small}$\E_{m,m,y,t}\left[\inf_{r\in[t,t_1)}X_r^{\hat{\mathrm{D}}}\right]<m$\end{small} for any $t_1\geq t$, and because of the path continuity of one-dimensional Brownian motion starting from 0 and the fact that it passes through 0 countless times within any short period of time, we can verify Condition (4) easily by contradiction.
\end{remark}
Then, we investigate properties of the free boundary $b$.
\begin{proposition}\label{prop:b}
The free boundary $b$ in Proposition \ref{prop:spec} satisfies $b\in C^{2}([0,\overline{m})\cup (\overline{m},\infty))\cap C([0,\infty))$, $b'(0)=0$, $b$ is decreasing on $[0,\infty)$, and $b$ is concave in a neighborhood of $0$.
\end{proposition}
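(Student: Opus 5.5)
The plan is to read off all four properties from the ODE system (\ref{eq:ODEFb}) for $(b,F)$, using Cramer's rule for $X$. Recall that on $R'_{m,b,F}$ we have $\det X\ge C_X>0$.

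\textbf{Regularity.} On $[0,\overline{m}]$ the entries $X_{ij}$ and $R_i$ are $C^\infty$ functions of $(m,b,F)$. Since $\det X$ is bounded away from $0$, the right-hand side $X^{-1}R$ is $C^\infty$ there. The unique solution from Proposition \ref{prop:spec} is therefore $C^1$, and bootstrapping ($b'=(X^{-1}R)_1(m,b,F)$ with $b,F\in C^k$ gives $b'\in C^k$) gives $b\in C^2([0,\overline{m}])$. For $m>\overline{m}$, $b\equiv b(\overline{m})$ by definition, so $b$ is smooth on $(\overline{m},\infty)$ and continuous at $\overline{m}$. Hence $b\in C^2([0,\overline{m})\cup(\overline{m},\infty))\cap C([0,\infty))$.

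\textbf{$b'(0)=0$.} By Cramer's rule,
\[
b'=\frac{N}{\det X},\qquad N:=R_1X_{22}-R_2X_{12}.
\]
At $m=0$, Eq. (\ref{eq:spec33}) rewrites as $\rho(\lambda_2^2e^{-\lambda_1b(0)}-\lambda_1^2e^{-\lambda_2b(0)})=F(0)(\lambda_2e^{-\lambda_1b(0)}-\lambda_1e^{-\lambda_2b(0)})$, which gives $R_1(0)=-qF(0)X_{12}(0)$. Likewise Eq. (\ref{eq:spec34}) gives $R_2(0)=-qF(0)X_{22}(0)$. So the vector $R(0)$ is $-qF(0)$ times the second column of $X(0)$. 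The unique solution of the linear system is then $(b'(0),F'(0))=(0,-qF(0))$. In particular $b'(0)=0$ and $F'(0)>0$, since $F(0)<0$.

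\textbf{Concavity near $0$.} Differentiate $b'=N/\det X$ and use $N(0)=0$ to get $b''(0)=N'(0)/\det X(0)$. I will compute $N'(0)$ by the chain rule, inserting $b'(0)=0$ and $F'(0)=-qF(0)$. The $-q$ factors coming from $e^{-qm}$ in $R_i$ and from $F'$ cancel, again because of the proportionality $R(0)=-qF(0)X_{\cdot 2}(0)$. What remains are the terms from differentiating the factors $e^{\lambda_i(m-b)}$, each carrying a factor $\lambda_i$. I will simplify these with Eqs. (\ref{eq:spec33})--(\ref{eq:spec34}), the ordering $\lambda_3<\lambda_1<0<\lambda_2<\lambda_4$, and the signs $X_{12}(0)<0$, $X_{22}(0)>0$, $F(0)<0$, $F(0)+(1-\rho)\lambda_4>0$. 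The target is $N'(0)<0$, hence $b''(0)<0$. Continuity of $b''$ then gives concavity on a neighbourhood of $0$.

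\textbf{Monotonicity.} From $b''(0)<0$ and $b'(0)=0$ we get $b'(m)<0$ on some $(0,\epsilon_3)$. For $m>\overline{m}$, $b$ is constant. The remaining issue is $b'<0$ on all of $(0,\overline{m}]$. I would try to use the freedom in the local construction: $\epsilon_2$ may be shrunk so that $\epsilon_2\le\epsilon_3$, and then $\overline{m}<\epsilon_2$ places $(0,\overline{m}]$ inside the region where $b''<0$. Failing that, I would argue by a first-zero contradiction. At a first zero $m_0$ of $b'$, $N(m_0)=0$ means $R(m_0)$ is parallel to the second column of $X(m_0)$. I would then show that $N'(m_0)$ has the same sign as $N'(0)$ using the algebraic relations (\ref{eq:spec21})--(\ref{eq:spec23}), which contradicts $b'$ approaching $0$ from below.

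\textbf{Main obstacle.} The hardest step is the sign of $N'(0)$ (equivalently of $b''(0)$). It involves the highly nonlinear coupled entries of $X$ and $R$, and everything depends on eliminating $F(0)$ and $b(0)$ through Eqs. (\ref{eq:spec33})--(\ref{eq:spec34}) so that a definite sign emerges.
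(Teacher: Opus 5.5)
Your treatment of regularity, of $b'(0)=0$ (via $R(0)=-qF(0)$ times the second column of $X(0)$), and of $b''(0)<0$ follows the paper.

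In the concavity step, the paper's $I_1=0$ is exactly your cancellation of the $e^{-qm}$-derivative terms, which sum to $-qN(0)=0$. The remaining $\lambda_i$-terms are the paper's $I_2<0$ and $I_3<0$. They are signed with precisely the facts you list, together with $b(0)>0$, so that part goes through once it is written out.

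The genuine gap is global monotonicity on $(\epsilon_3,\overline m)$.

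Shrinking $\epsilon_2$ below $\epsilon_3$ does not prove the statement. It replaces the hypothesis $\overline m<\epsilon_2$ of Proposition~\ref{prop:spec} by the stronger $\overline m<\min(\epsilon_2,\epsilon_3)$, where $\epsilon_3$ is an uncontrolled radius that depends on the solution.

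Your fallback, a first zero $m_0$ of $b'$, is the right frame, but the tool you name cannot work. Eqs.~(\ref{eq:spec21})--(\ref{eq:spec23}) concern $F$ and the $A_i$. By contrast, $N=R_1X_{22}-R_2X_{12}$ involves neither: it depends only on $m$ and $u=b(m)-m$. This is also why $F'(0)$ never actually enters your $N'(0)$.

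The missing observation is that dividing by $-X_{12}X_{22}>0$ turns $N$ into the function $G$ of Eq.~(\ref{eq:G}). Indeed, $R_1/(-X_{12})+R_2/X_{22}=q e^{-qm}G(b(m)-m)$, so
\begin{equation*}
N(m)=-q\,e^{-qm}X_{12}(m)X_{22}(m)\,G\bigl(b(m)-m\bigr).
\end{equation*}
The argument then runs as follows:
\begin{itemize}
\item $b'(m_0)=0$ gives $N(m_0)=0$, hence $G(b(m_0)-m_0)=0$.
\item Since $G$ is strictly monotone with $G(b(0))=0$, this forces $b(m_0)=b(0)+m_0>b(0)$.
\item This contradicts $b'<0$ on $(0,m_0)$.
\end{itemize}
This is the paper's argument, and it needs no second-derivative information at $m_0$.

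Your intended claim about $N'(m_0)$ is in fact true. One has $N'(m_0)=e^{-qm_0}N'(0)$, because $b-m$ equals $b(0)$ at both points and $u'=b'-1=-1$ at both. But you only reach it after the identity above, and once you have that identity the contradiction is immediate without it.
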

\begin{proof}
According to Eqs. (\ref{eq:spec33}) and (\ref{eq:spec34}), we have $\frac{R_1(0)}{X_{12}(0)}=\frac{R_2(0)}{X_{22}(0)}=-qF(0)$. Substituting it into Eq. (\ref{eq:ODEFb}) yields\begin{small} $$\frac{X_{11}(0)}{X_{12}(0)}b'(0)+F'(m)=\frac{X_{21}(0)}{X_{22}(0)}b'(0)+F'(m)=-qF(0).$$\end{small}
As $\det(X(0))\neq 0$, we have $b'(0)=0$ and $F'(0)=-qF(0)$.

Using Eq. (\ref{eq:ODEFb}) and the fact that $\det(X(m))$ has a uniform lower bound $C>0$ on $[0,\epsilon_2]$, we have\begin{small}
\begin{align}\label{eq:b''}
\begin{bmatrix}
b''(m) \\ F''(m)
\end{bmatrix}
=
X^{-1}(m)\left(\begin{bmatrix}
R_1'(m) \\
R_2'(m)
\end{bmatrix}
-X''(m)
\begin{bmatrix}
b(m) \\
F(m)
\end{bmatrix}\right).
\end{align}\end{small}
Hence, the free boundary $b\in C^{2}[0,\epsilon_2)$, and by definition we have $b\in C^2(\overline{m},\infty)$.
Then, we prove $b''(0)<0$ and thus $b''(m)<0$ on some interval $[0,\epsilon_3)$, implying the local concavity of $b$.
Letting $m=0$ in Eq. (\ref{eq:b''}) yields
\begin{small}$$b''(0)=\frac{1}{\det (X(0))}(I_1+I_2+I_3),$$\end{small}
where 
\begin{small}
\begin{align*}
I_1 &= q^2 \big[ (1\!-\!\rho)X_{12}(0) (\lambda_4^2 e^{-\lambda_3 b(0)} \!-\! \lambda_3^2 e^{-\lambda_4 b(0)}) - \rho X_{22}(0) (\lambda_2^2 e^{-\lambda_1 b(0)} \!-\! \lambda_1^2 e^{-\lambda_2 b(0)}) \big], \\[-0.5ex]
I_2 &= q \lambda_1 \lambda_2 X_{22}(0) \big[ \rho (\lambda_2 e^{-\lambda_1 b(0)} \!-\! \lambda_1 e^{-\lambda_2 b(0)}) + F(0) (e^{-\lambda_2 b(0)} \!-\! e^{-\lambda_1 b(0)}) \big], \\[-0.5ex]
I_3 &= q \lambda_3 \lambda_4 X_{12}(0) \big[ \!-(1\!-\!\rho) (\lambda_4 e^{-\lambda_3 b(0)} \!-\! \lambda_3 e^{-\lambda_4 b(0)}) + F(0) (e^{-\lambda_4 b(0)} \!-\! e^{-\lambda_3 b(0)}) \big].
\end{align*}
\end{small}
We deal with $I_1$, $I_2$ and $I_3$ respectively. According to Eqs.  (\ref{eq:spec33}) and (\ref{eq:spec34}), we have $I_1=0$. Besides, $I_2<0$ holds due to $\lambda_1<0$, $\lambda_2>0$, $b(0)>0$, $F(0)<0$ and $X_{22}(0)>0$. In order to prove $I_3<0$, we only need to prove $-(1-\rho)(\lambda_4e^{-\lambda_3b(0)}-\lambda_3e^{-\lambda_4b(0)})+F(0)(-e^{-\lambda_3b(0)}+e^{-\lambda_4b(0)})<0$. Using Eq. (\ref{eq:F(0)}), we have
\begin{small}
\begin{align*}
F(0)(e^{-\lambda_4b(0)} \!&-\! e^{-\lambda_3b(0)}) - (1\!-\!\rho)(\lambda_4e^{-\lambda_3b(0)} \!-\! \lambda_3e^{-\lambda_4b(0)}) \\[-0.5ex]
&\quad < [-(1\!-\!\rho)\lambda_4 \!-\! F(0)](e^{-\lambda_3b(0)} \!-\! e^{-\lambda_4b(0)}) < 0.
\end{align*}
\end{small}
As $\det(X(0))>0$, we obtain $b''(0)<0$. Hence, there exists $\epsilon_3\in(0,\overline{m}]$ such that $b''(m)<0$ holds for any $m\in [0,\epsilon_3)$. As $b'(0)=0$, we have $b'(m)<0$, and thus, $b$ is decreasing on $[0,\epsilon_3]$.

Finally, we prove that $b$ is decreasing on $[0,\infty)$. We only need to prove the monotonicity on $[0,\overline{m})$ as $b$ is a constant function on $[\overline{m},\infty)$. Define $\hat{m}=\inf \{m\in(0,\overline{m})|b'(m)=0\}\wedge  \overline{m}$. Assume $\hat{m}\ne \overline{m}$, then there exists $m_1>0$ satisfying $m_1=\inf \{m\in(0,\overline{m})|b'(m)=0\}$. We have $m_1>0$ because $b'(m)<0$ holds for any $m\in [0,\epsilon_3)$. As $b'(m_1)=0$, according to Eqs. (\ref{eq:ODEFb}) and (\ref{eq:G}), we obtain $G(b(m_1)-m_1)=0$, thus, $b(m_1)=b(0)+m_1>b(0)$ holds. This is a contradiction to the fact that $b'(m)\leq 0$ for any $m \in [0,m_1)$. Thus, we have $m_1=\overline{m}$, $b'(m)\leq 0$ holds for $m \in [0,\overline{m})$ and $b$ is decreasing on $[0,\infty)$. Therefore, we complete the proof.
\end{proof}
The results of Propositions \ref{prop:spec} and \ref{prop:b} indicate that the free boundary $b$ is decreasing on $[0,\infty)$. Economically, this implies that a poorer historical minimum performance level leads to a more conservative dividend payout policy due to the scarring effect. The local concavity of $b$ at $0$ suggests that its value stabilizes near $b(0)$ when the company has previously been close to bankruptcy. Furthermore, given an early-warning threshold $\overline{m}$, if the company's performance has never fallen below this level, the manager will optimally maintain a constant dividend-barrier strategy according to dividend signaling theory; see \citet*{AMG2013}. We will conduct numerical simulations to intuitively illustrate these observations in the next subsection.
\subsection{Numerical Simulation and Comparative Static Analysis}\label{subsec:num}
\begin{figure}[!ht]
    \centering
    \includegraphics[width=0.8\linewidth]{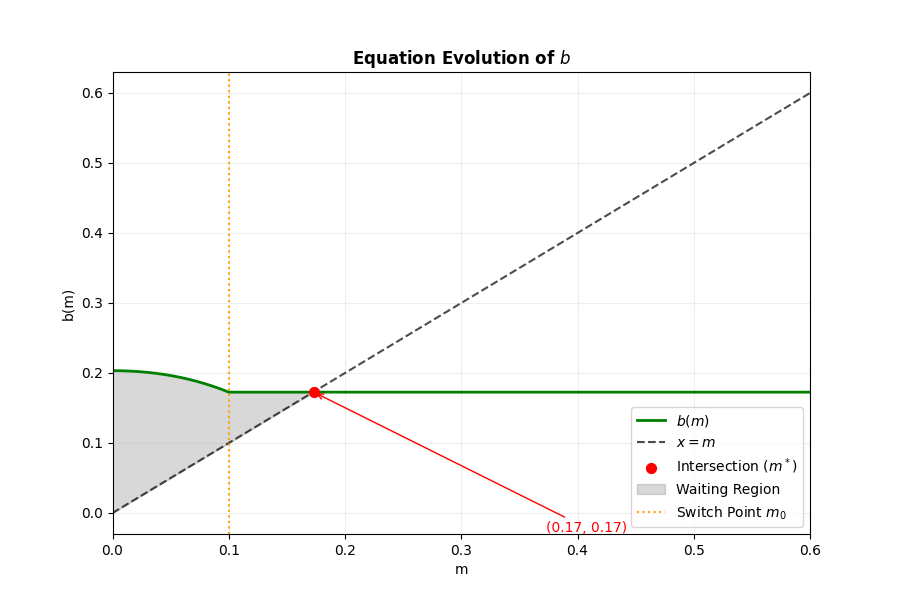}
    \caption{Numerical simulation of the equilibrium.}
    \label{fig:m0}
\end{figure}

The existence of the equilibrium is rigorously established in Section \ref{subsec:rigor}. Setting the parameters to $\mu=\sigma=1$, $\rho=0.3$, $q=5$, $\lambda_1=-2$, $\lambda_2=1$, $\lambda_3=-3$, $\lambda_4=2$, and $\overline{m}=0.1$, Figure \ref{fig:m0} illustrates the resulting waiting region $W$ (shaded area). For instance, with initial values $m=0.3$ and $x=0.5$, the company pays a lump-sum dividend at time zero, so that $X_0=\overline{m}=0.17$. Thereafter, the process $(X,M)$, as the unique strong solution of the associated Skorokhod reflection problem, stays within the waiting region $W$. For another example, if $m=0.05$ and $x=0.3$, the equilibrium singular control at $t=0$ immediately brings the wealth to the barrier, i.e., $X_0 = b(m)$.

When the historical minimum performance stays above the threshold $\overline{m}=0.1$, the firm follows a stable dividend policy, reflected in a constant barrier function $b$. Once the historical minimum falls below $\overline{m}$, however, the firm switches to a survival‑driven policy that reduces payouts. These results are consistent with dividend signaling theory,  dividend smoothing theory and the scarring effect; see, e.g. \citet*{AMG2013} and \citet*{K2021}.

\begin{figure}[!ht]
    \centering
    \includegraphics[width=1.0\linewidth]{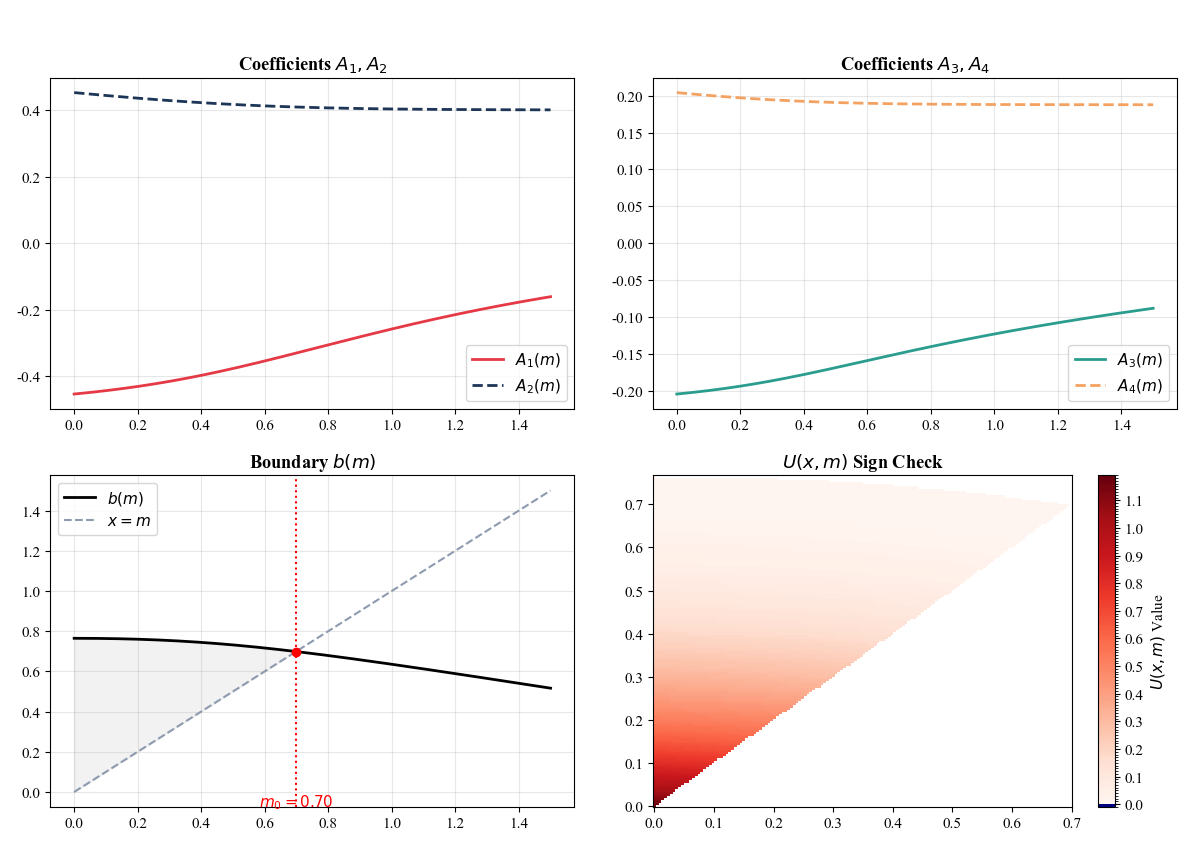}
    \caption{Numerical simulation of the equilibrium for $\overline{m}=\infty$.}
    \label{fig:ex1}
\end{figure}
\begin{figure}[!ht]
    \centering
    \includegraphics[width=1.0\linewidth]{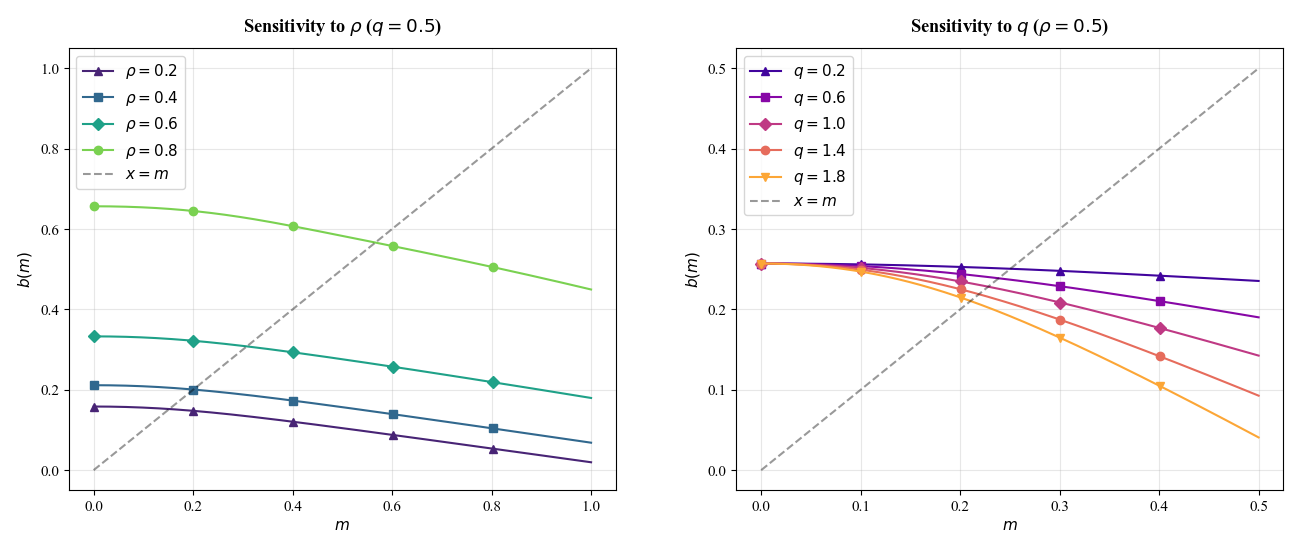}
    \caption{Comparative statics of the free boundary under different parameter values.}
    \label{fig:qrho}
\end{figure}
Set $\mu=\sigma=1$, $\rho=0.5$, $q=0.2$, $\lambda_1=-2.5$, $\lambda_2=0.5$, $\lambda_3=-3$, $\lambda_4=1$, and $\overline{m}=\infty$. Although this example does not satisfy the assumptions of Proposition 4.1 
, the numerical results suggest the existence of an equilibrium.  Figure \ref{fig:ex1} displays the results: the first two subplots show the functions $A_i$ ($i=1,2,3,4$), the third subplot presents the free boundary $b$, and the fourth subplot examines the validity of Eq. (4.4)
via a heat map. Here, the function $U$ (defined in the proof of Proposition 4.1
) is visualized; regions where $U\ge0$ are shaded in varying red, otherwise in uniform blue (no such region appears in this case). We focus on Eq. (4.4) 
because Eq. (4.3) 
has been verified directly. A sufficient condition for Eq. (4.4) 
is that $A_3(m)\le0$ for all $m\in[0,m^*]$, where $m^*$ is the first intersection of $b$ with the line $x=m$. As seen in the second subplot, this condition is met in the present example. For parameter values satisfying $|\lambda_i|<10\quad i=1,2,3,4$, $0<\rho<1$, and $0<q<5$, we perform extensive numerical simulations and find that the equilibrium always exists.

Let $\overline{m}=\infty$, $\mu=\sigma=1$, $\lambda_1=-2.5$, $\lambda_2=0.5$, $\lambda_3=-5$, and $\lambda_4=3$. Figure \ref{fig:qrho} contains two subplots, displaying the free boundary for different values of $\rho$ with $q$ fixed at $0.5$, and for different values of $q$ with $\rho$ fixed at $0.5$, respectively.

First, for fixed parameters and a given $m\ge0$, the boundary $b(m)$ is monotonically increasing in $\rho$. A larger $\rho$ makes the discount function closer to $e^{-\delta(t-s)}$ rather than $e^{-(\delta+\gamma)(t-s)}$. Hence, future managers become, on average, more patient. Anticipating this, the current manager expects that future selves will be less willing to pay dividends early. To reach a subgame-perfect equilibrium, the current manager therefore sets a higher dividend barrier.

Second, for fixed parameters and a given $m\geq0$, the boundary $b(m)$ is monotonically decreasing in $q$. From the form of the payoff functional, a larger $q$ implies that, for a fixed $m$, a dividend of the same size yields a higher marginal benefit when the historical minimum is lower. Consequently, when historical performance is poor, future managers will exhibit a stronger inclination to pay dividends. To counteract this strong propensity during difficult times, the current manager lowers the dividend barrier in the equilibrium strategy.

If $\rho=0$ or $\rho=1$, the framework reduces to a time-consistent singular control model that includes the running minimum process. If $q=0$, i.e., the running minimum is removed, the equilibrium becomes independent of $m$, and we obtain a constant barrier solution.
\section{Conclusion} 
We develop a path-dependent, time-inconsistent singular control framework incorporating a running minimum process. Constructing unique strong solutions for a class of Skorokhod reflection problems, we facilitate a rigorous equilibrium formulation. Compared to previous studies, our verification theorem operates under significantly weaker regularity and more precise integrability conditions.

Application to a dividend optimization problem reveals an equilibrium dividend barrier that decreases as the historical minimum deteriorates. This result provides a mathematical rationale for dividend smoothing theory, dividend signaling theory and the scarring effect observed in empirical payout policies. Our theoretical findings are further validated through numerical simulations.

Future research may address the general well-posedness of the Skorokhod reflection problems studied here, the uniqueness of equilibrium, models combining regular and singular controls, and other path-dependent settings, such as Asian-type path integrals that capture the cumulative influence of historical trajectories.

\bibliographystyle{plainnat}
\bibliography{bio}
\section{Some Auxiliary Proofs}
\subsection{Proof of Proposition \ref{prop:solution1}
}\label{app:1}
Let $(\Omega, \mathcal{F}, \{\mathcal{F}_r\}_{r \ge t}, P)$ be the filtered probability space. For a fixed $T > t$, we define the Banach space $\mathcal{S}^2[t, T]$ as the set of all adapted, c\'adl\'ag processes $X$ such that\begin{small}
\begin{align*}
\|X\|_{\mathcal{S}^2} = \left( E \left[ \sup_{t \le r \le T} |X_r|^2 \right] \right)^{\frac12} < \infty.
\end{align*}\end{small}
For any $X \in \mathcal{S}^2$, the running minimum process $\{M_r^X\}_{r\geq t}$ is defined as $
M_r^X = m \wedge \inf_{t \le s \le r} X_s.
$
As $|a\wedge b-a\wedge c|\leq |b-c|$ holds for any $a,b,c\in \R$, we have\begin{small}
\begin{align}\label{eq:MX}
\sup_{t \le s \le r} \Big|M_s^X - M_s^Y\Big|\leq \sup_{t \le s \le r} \sup_{t \le l \le s} \Big|(X_l - Y_l)\Big| = \sup_{t \le s \le r} |X_s - Y_s|.
\end{align}\end{small}

Given a singular $\mathrm{D} = \{\mathrm{D}_r\}_{r \ge t}$, we define the operator $\mathcal{L}$ on $\mathcal{S}^2$ such that for any $X \in \mathcal{S}^2$, the process $Z = \mathcal{L}(X)$ is given by\begin{small}
\begin{align*}
Z_r = x+y - \mathrm{D}_r  + \int_t^r \mu(X_s, M_s^X, \mathrm{D}_s, s) \d s + \int_t^r \sigma(X_s, M_s^X, \mathrm{D}_s, s) \d B_s.
\end{align*}\end{small}
The linear growth condition and the condition $\E_{x,m,y,t}[(\mathrm{D}_{t+r}-\mathrm{D}_{t-})^2]<\infty$ for any $r\geq 0$ guarantee that the above stochastic integral is well-defined.

Consider two processes $X, \hat{X} \in \mathcal{S}^2$ and let $Z = \mathcal{L}(X), \hat{Z} = \mathcal{L}(\hat{X})$. The difference is
\begin{small}
\begin{align*}
Z_r \!-\! \hat{Z}_r = & \int_t^r \!\! \big[ \mu(X_s, M_s^X\!, \mathrm{D}_s, s) \!-\! \mu(\hat{X}_s, M_s^{\hat{X}}\!, \mathrm{D}_s, s) \big] \d s \\[-0.5ex]
& + \int_t^r \!\! \big[ \sigma(X_s, M_s^X\!, \mathrm{D}_s, s) \!-\! \sigma(\hat{X}_s, M_s^{\hat{X}}\!, \mathrm{D}_s, s) \big] \d B_s.
\end{align*}
\end{small}
Using the Cauchy-Schwarz inequality and the BDG inequality yields\begin{small}
\begin{align*}
E \left[ \sup_{t \le s \le r} |Z_s - \hat{Z}_s|^2 \right] 
\leq & C \int_t^r E \left[ |X_s - \hat{X}_s|^2 + |M_s^X - M_s^{\hat{X}}|^2 \right] \d s,
\end{align*}\end{small}
where $C:=C(t,r,L)$. Thus, \begin{small}
\begin{align*}
E \left[ \sup_{t \le s \le r} |Z_s - \hat{Z}_s|^2 \right] \le 2C \int_t^r E \left[ \sup_{t \le u \le s} |X_u - \hat{X}_u|^2 \right] \d s.
\end{align*}\end{small}
This implies\begin{small} $$\| \mathcal{L}(X) - \mathcal{L}(\hat{X}) \|_{\mathcal{S}^2_r}^2 \le 2C \int_t^r \| X - \hat{X} \|_{\mathcal{S}^2_s}^2 \d s\leq C^*(t,r,L)\cdot \| X - \hat{X} \|_{\mathcal{S}^2_s}^2.$$\end{small} Picking $r>t$ such that $ C^*(t,r,L)<1$ holds. By the Banach fixed-point theorem, we obtain the existence and uniqueness of the SDE on $[t,r]$.

Because the choice of $C^*$ is independent of the initial distributions of $Z$, we similarly obtain the existence and uniqueness on $[r,2r-t]$, $[2r-t,3r-2t]$..., and thus the proof is completed.
\subsection{Proof of Proposition \ref{prop:Skorokhod}}\label{app:2}
(a) We employ the iterative method to construct the solution of the Skorokhod problem Eq. (\ref{eq:Skorokhod2})
. First, for any continuous and increasing function $l:[0,\infty)\to [0,\infty)$, we construct the mappings $\Phi_{l}$ and $\Gamma_{l}$. In the following, we abbreviate $\Phi_{l}$ and $\Gamma_{l}$ to $\Phi$ and $\Gamma$, respectively. Let $C[0, T]$ be the space of continuous functions equipped with the supremum norm $\|\cdot\|$. For any $w \in C[0, T]$, the running minimum path is given by $M(x)_t=m_0\wedge \inf\limits_{0\leq r\leq t} x_r$. For any $x\in C[0,T]$, the half-line Skorokhod problem
$z_t = x_t - k_t+y_0,\quad k_0=y_0$ on $(-\infty,0]$ has a unique solution $(z,k)$, where $k_t=y_0+\sup\limits_{s \le t} (x_s)^+$ according to Lemma 3.6.14 in \citet*{KS1991}. Define $\Phi:C[0,T]\to C[0,T]$ as $x\mapsto z$. For any $x\in C[0,T]$, consider the equation \begin{small}
\begin{align}\label{eq:Phiwz}
w_t&=\Phi(x_t-b(M(w)_t,l_t,t))+b(M(w)_t,l_t,t)\\&=x_t-\sup_{s\leq t}(x_s-b(M(w)_s,l_s,s))^+.\notag
\end{align}\end{small}
Using Eqs. (\ref{eq:MX}) and ({\ref{eq:Phiwz}}), for any $w,w'\in C[0,T]$, we have 
\begin{small}
\begin{align*}
&\|x_t \!-\! \sup_{s\leq t}(x_s \!-\! b(M(w)_s,l_s,s))^+ \!-\! x_t \!+\! \sup_{s\leq t}(x_s \!-\! b(M(w')_s,l_s,s))^+\| \\[-0.8ex]
&\leq \|\sup_{s\leq t}|(x_s \!-\! b(M(w')_s,l_s,s))^+ \!-\! (x_s \!-\! b(M(w)_s,l_s,s))^+|\| \\[-0.8ex]
&\leq \|b(M(w)_t,l_t,t) \!-\! b(M(w')_t,l_t,t)\| \\[-0.8ex]
&\leq L_m\|M(w)_t \!-\! M(w')_t\| \\[-0.8ex]
&\leq L_m\|w_t \!-\! w'_t\|,
\end{align*}
\end{small}
where $L_m\in(0,1)$ is the Lipschitz constant.
Thus, the map $w\mapsto \Phi(x-b(M(w),l,\cdot))+b(M(w),l,\cdot)$ is a contraction, and Eq. (\ref{eq:Phiwz}) has a unique solution $w$ in $C[0,T]$. Based on this result, giving $x$, we obtain a unique solution $(w,k)$ through Eq. (\ref{eq:Phiwz}), and we
define $\Gamma: C[0,T]\to C[0,T]$ as $x\mapsto w$. 

For any $x,x'\in C[0,T]$ and the corresponding $w,w'\in C[0,T]$, we have
\begin{small}\begin{align*}
\|w_t-w'_t\| &\leq \|x_t-\sup_{s\leq t}(x_s-b(M(w)_s,l_s,s))^+-x'_t+\sup_{s\leq t}(x'_s-b(M(w')_s,l_s,s))^+\|\\&\leq 2\|x_t-x'_t\|+L_m\|w_t-w'_t\| .
\end{align*}\end{small}
Therefore, letting $C_L:=\frac{2}{1-L_m}$,  we have
\begin{align}\label{eq:wx}
\|w_t-w'_t\|\leq C_L\|x_t-x'_t\|.
\end{align}
Then, we start the iteration as follows:\\
For any $\omega\in \Omega$, define $X_t^{(0)}(\omega)=x_0$ and $M_t^{(0)}(\omega)=m_0$, $\forall t\in[0,T]$. Moreover, for any adapted, c\'adl\'ag, continuous and increasing process $\{\mathrm{L}_t\}_{t\geq 0}$ satisfying $\E[\mathrm{L}_t^2]<\infty$ for any $t \geq 0$, define the sequence of iterations for 
$n\in \N$ as 
\begin{small}
\begin{align*}
X^{n+1}_t(\omega) &= \Gamma\bigg(X_0 \!+\! \int_0^t \!\! \mu(X^n_r(\omega),M_r^n(\omega),\mathrm{L}_r(\omega),r)\d r \!+\! \int_0^t \!\! \sigma(X^n_r(\omega),M_r^n(\omega),\mathrm{L}_r(\omega),r)\d B_r \bigg) \\[-0.6ex]
&:= \Gamma(I(X^n_r(\omega),M_r^n(\omega))),
\end{align*}
\end{small}
and \begin{small}$$M^{n+1}_t(\omega):=M(X^{n+1})_t(\omega)=m_0\wedge \inf_{0\leq r\leq t} X^{n+1}_r(\omega).$$\end{small} Then, usig Eq. (\ref{eq:wx}), we have\begin{small}
\begin{align*}
\|X^{n+2}_t(\omega)-X^{n+1}_t(\omega)\|+\|M^{n+2}_t(\omega)-M^{n+1}_t(\omega)\|&\leq 2\|X^{n+2}_t(\omega)-X^{n+1}_t(\omega)\|\\ &\leq 2C_L\|I(X^{n+1}_t(\omega))-I(X^{n}_t(\omega))\|.
\end{align*}\end{small}
Hence, using the BDG inequality and Lipschitz continuity of $\mu$ and $\sigma$, we have
\begin{small}
\begin{align*}
&\mathbb{E}\Big[ \sup_{s \le t}( |X^{(n+2)}_s \!-\! X^{(n+1)}_s|^2 \!+\! |M^{(n+2)}_s \!-\! M^{(n+1)}_s|^2) \Big] \\[-0.7ex]
&\le C_L^2 K \!\! \int_0^t \!\! \mathbb{E}\Big[ \sup_{r \le s} |X^{(n+1)}_r \!-\! X^{(n)}_r|^2 \!+\! \sup_{r \le s} |M^{(n+1)}_r \!-\! M^{(n)}_r|^2 \Big] \d s \\[-0.7ex]
&\le C_L^2 K t \cdot \mathbb{E}\Big[\sup_{s \le t}( |X^{(n+1)}_s \!-\! X^{(n)}_s|^2 \!+\! |M^{(n+1)}_s \!-\! M^{(n)}_s|^2)\Big] \\[-0.7ex]
&\le (C_L^2 K t)^n \mathbb{E}\Big[\sup_{s \le t}( |X^{(2)}_s \!-\! X^{(1)}_s|^2 \!+\! |M^{(2)}_s \!-\! M^{(1)}_s|^2)\Big].
\end{align*}
\end{small}
Define $T:=\frac{1}{2C_L^2K}$. using the Borel-Cantelli lemma and the Cauchy Convergence Principle, the sequence $\{X^{(n)}_t\}$ and $\{M^{(n)}_t\}$ converge uniformly on $\left[0,T\right]$, a.s.. According to Eq. (\ref{eq:Phiwz}) and the definition of $\Phi$, there exists a unique $\{\F_r\}_{r\geq t}$- adapted, nondecreasing, c\'adl\'ag, $k^{(n+1)}$ satisfying $k^{(n+1)}_0=y_0$ such that \begin{small}
\begin{align}\label{eq:XMK}
X^{(n+1)}_t-b(M^{(n+1)}_t,\mathrm{L}_t,t)&=x_0+y_0+\int_0^t\mu(X^{(n)}_r,M_r^{(n)},\mathrm{L}_r,r)\d r-b(M^{(n+1)}_t,\mathrm{L}_t,t)\\&\quad +\int_0^t\sigma(X^{(n)}_r,M_r^{(n)},\mathrm{L}_r,r)\d B_r-k^{(n+1)},\notag
\end{align}\end{small}
where $X^{(n+1)}_t-b(M^{(n+1)}_t,\mathrm{L}_t,t)\leq 0$ a.s., and the value of  $k_t^{(n+1)}$ only changes when $X^{(n+1)}_t-b(M^{(n+1)}_t,\mathrm{L}_t,t)=0$.
Because of the convergences of $\{X^{(n)}_t\}$ and $\{M^{(n)}_t\}$, we have $\{k^{(n)}_t\}$ converges uniformly on $[0,T]$.

Similarly adopting this approach within the intervals $[T,2T]$, $[2T,3T]$..., we obtain the a.s. locally uniform convergences of $\{X^{(n)}_t\}$, $\{M^{(n)}_t\}$ and $\{k^{(n)}_t\}$ on $[0,\infty)$. Denote by $X=\lim\limits_{n\to \infty}X^{(n)}$, $M=\lim\limits_{n\to \infty}M^{(n)}$, and $\mathrm{D}=\lim\limits_{n\to \infty}k^{(n)}$.

Passing $n\to \infty$ in Eq. (\ref{eq:XMK}) and using the Lipschitz continuity of $\mu$ and $\sigma$ with respect to the first and the second components, we obtain for a.s.-$\omega$,\begin{small}
\begin{align}\label{eq:SDEXM}
X_t=x_0+y_0+\int_0^t\mu(X_r,M_r,\mathrm{L}_r,r)\d r+\int_0^t\sigma(X_r,M_r,\mathrm{L}_r,r)\d B_r-\mathrm{D}_t,
\end{align}\end{small}
where $X_t-b(M_t,\mathrm{L}_t,t)\leq 0$ a.s. 

Then, we prove the uniqueness for the solution of the SDE (\ref{eq:SDEXM}). Assume that $(X^1,M^1)$ and $(X^2,M^2)$ are two different solutions of Eq. (\ref{eq:SDEXM})
. Then, we have \begin{small}
$$X^1_t = \Gamma(I(X^1, M^1))_t, \quad X^2_t = \Gamma(I(X^2, M^2))_t.$$\end{small}
Using the Lipschitz property of $\Gamma$ and imitating the proof of the existence, we obtain for $\Psi=X^2-X^1$ and $M^2-M^1$,\begin{small}
$$E\left[\sup_{0 \le s \le t} |\Psi^1_s - \Psi^2_s|^2\right] \le \tilde{K} \int_0^t E\left[\sup_{0 \le r \le s} |\Psi^1_r - \Psi^2_r|^2\right] \d s.$$\end{small}
According to the Gronwall inequality, for any $T>0$, we have $X^1_t=X^2_t$ and $M_t^1=M_t^2$ a.s. for any $t\in[0,T]$. Therefore, we obtain the uniqueness.

Finally, we prove the existence and uniqueness of the strong solution for the Skorokhod problem Eq. (\ref{eq:Skorokhod2}) 
. Let $\zeta:Q\to Q$, where $Q$ denotes the set of all the adapted, c\'adl\'ag, continuous and increasing process $\{\mathrm{L}_t\}_{t\geq 0}$ satisfying $\E[\mathrm{L}_t^2]<\infty$ for any $t \geq 0$. The mapping $\zeta$ is defined as 
\begin{align}\label{eq:zeta}
\zeta(\mathrm{L})_t= \mathrm{D}_t=y_0+\sup_{s\leq t}(I(X^{\mathrm{L}}_s,M^{\mathrm{L}}_s)-b(M_s,\mathrm{L}_s,s))^+,
\end{align}
where $\mathrm{D}_t$ is given by SDE (\ref{eq:SDEXM}). For $\mathrm{L}^1,\mathrm{L}^2\in Q$ and the corresponding $X^{\mathrm{L^1}}$, $X^{\mathrm{L^2}}$, $M^{\mathrm{L^1}}$, $M^{\mathrm{L^2}}$, $\mathrm{D}^1$ and $\mathrm{D}^2$, using Eq. (\ref{eq:SDEXM}), we have\begin{small}
\begin{align}\label{eq:XID}
\Big|X_t^{\mathrm{L}^1}-X_t^{\mathrm{L}^2}\Big|\leq \Big|I(X^{\mathrm{L}^1},M^{\mathrm{L}^1})-I(X^{\mathrm{L}^2},M^{\mathrm{L}^2})\Big|+|\mathrm{D}^2_t-\mathrm{D}^1_t|.
\end{align}\end{small}
Then, using Eqs. (\ref{eq:MX}), (\ref{eq:zeta}) and (\ref{eq:XID}), the BDG inequality and the Cauchy-Schwarz inequality yields \begin{small}
\begin{align*}
((1-L_m)^2-C_\epsilon t)E\left[\sup_{0\leq s\leq t}|X_s^{\mathrm{L}^1}-X_s^{\mathrm{L}^2}|^2\right]\leq (C_\epsilon t+\epsilon+L_d^2)E[|\mathrm{L}^1_t-\mathrm{L}^2_t|^2],
\end{align*}\end{small}
where $\epsilon:=\frac{(1-L_m)^2-L_d^2}{2}>0$, and $C_\epsilon$ is a constant related to $\epsilon$.
Hence, we have\begin{small}
\begin{align*}
E\left[\sup_{0\leq s\leq t}|X_s^{\mathrm{L}^1}-X_s^{\mathrm{L}^2}|^2\right]\leq C\cdot E[|\mathrm{L}^1_t-\mathrm{L}^2_t|^2]
\end{align*}\end{small}
for some sufficiently small $t>0$, where the constant $C\in (0,1)$.
Using Eq. (\ref{eq:SDEXM}) again, we have\begin{small}
\begin{align}\label{eq:DLK}
E\left[\sup_{0 \le s \le t} |\mathrm{D}^1_s-\mathrm{D}^2_s|^2\right] \le \tilde{K} \int_0^t E\left[\sup_{0 \le r \le s} |\mathrm{L}^1_r-\mathrm{L}^2_r|^2\right] \d s
\end{align}\end{small}
holds for a sufficiently small $t>0$, where $\tilde{K}$ is a constant.
Then, similarly, using the iterative method, we obtain $\mathrm{D}\in Q$ such that\begin{small}
\begin{align*}
X_t=x_0+y_0+\int_0^t\mu(X_r^{\mathrm{D}},M_r^{\mathrm{D}},\mathrm{D}_r,r)\d r+\int_0^t\sigma(X_r^{\mathrm{D}},M_r^{\mathrm{D}},\mathrm{D}_r,r)\d B_r-\mathrm{D}_t,
\end{align*}\end{small}
where we have
$\mathrm{D}_t=y_0+\sup\limits_{s\leq t}(I(X^{\mathrm{D}}_s,M^{\mathrm{D}}_s)-b(M_s,\mathrm{D}_s,s))^+$ and $\mathrm{D}$ is an increasing process. The Lipschitz continuity of $\mu$ and $\sigma$, along with Eq. (\ref{eq:DLK}) ensures the uniqueness of $\mathrm{D}$ by an argument analogous to that used in the preceding proof of the uniqueness of $X$ and $M$. Therefore, we obtain the existence and the uniqueness of the solution for the Skorohod problem Eq. (\ref{eq:Skorokhod2}) 
.

(b) We prove the admissibility of $\Xi$. Denote by $J$ the value of the integral in Condition (b) of Definition \ref{def:sing2}
. There exists $C>0$ such that
\begin{small}
\begin{align*}
J &\leq C \Bigg(\E_{x,m,y,t}\Bigg[\int_0^{\tau_a^\mathrm{D}} \!\! \beta(r-s)\d r\Bigg] + \E_{x,m,y,t}\Bigg[\int_0^{\tau_a^\mathrm{D}} \!\! \beta(r-s)\diamond\d \mathrm{D}_r\Bigg] \\[-0.6ex]
&\quad + \E_{x,m,y,t}\Bigg[\int_0^{\tau_a^\mathrm{D}} \!\! \beta(r-s)\square \d M_r\Bigg]\Bigg) := C(J_1+J_2+J_3).
\end{align*}
\end{small}
We obviously have\begin{small}
\begin{align*}
J_1&:=\E_{x,m,y,t}\Bigg[\int_0^{\tau_a^\mathrm{D}}\beta(r-s)\d r\Bigg]\leq C\sum_{n=1}^\infty  n^{-C_2}<\infty.
\end{align*}\end{small}
Because $\E\left[\sup_{0\leq r\leq 1}\left|B_r-B_0\right|\right]=\sqrt{\frac{2}{\pi}}$, using Eq. (\ref{eq:Phiwz}) yields
\begin{small}
\begin{align*}
|\mathrm{D}_{n+1}\!-\!\mathrm{D}_n| &\le \sup_{r,s\in[t,t+1]}\!(|I(X_r,M_r)\!-\!I(X_s,M_s)|)\!+\!\sup_{r,s\in[t,t+1]}\!(b(M_r,\mathrm{D}_r,r)\!-\!b(M_s,\mathrm{D}_r,s))^+ \\[-0.7ex]
&\le C\Big[1\!+\!\sup_{r,s\in[t,t+1]}\!|I(X_r,M_r)\!-\!I(X_s,M_s)|\Big]\!+\!L_m\sup_{r,s\in[t,t+1]}\!|X_r\!-\!X_s|\!+\!L_d |\mathrm{D}_{n+1}\!-\!\mathrm{D}_n| \\[-0.7ex]
&\le C\Big(1\!+\!\sup_{r\in[0,1]}|B_r|\Big) < \infty.
\end{align*}
\end{small}
Thus, we obtain
\begin{small}
\begin{align*}
J_2 &:= \E_{x,m,t}\Bigg[\int_0^{\tau_a^\mathrm{D}} \!\! \beta(r-s)\diamond\d \mathrm{D}_r\Bigg] = \sum_{n=1}^{\infty}\E_{x,m,t}\Bigg[\int_{(n-1)\wedge \tau_a^{\mathrm{D}}}^{n\wedge \tau_a^\mathrm{D}} \!\! \beta(r-s)\diamond\d \mathrm{D}_r\Bigg] \\[-0.8ex]
&\leq C \sum_{n=1}^{\infty} n^{-C_2} \E_{x,m,t}\bigg[\int_{(n-1)\wedge \tau_a^{\mathrm{D}}}^{n\wedge \tau_a^\mathrm{D}} \!\! 1\diamond \d \mathrm{D}_r\bigg] \\[-0.8ex]
&\leq C \sum_{n=1}^\infty n^{-C_2} \Big(1 + \E\big[\sup_{r\in [n,n+1)} |B_r|\big]\Big) \\[-0.8ex]
&\leq C \sum_{n=1}^\infty n^{-C_2} < \infty.
\end{align*}
\end{small}
Similarly, we have $J_3:=\E_{x,m,t}\Bigg[\int_0^{\tau_a^\mathrm{D}}\beta(r-s)\square \d M_r\Bigg]<\infty$.
Therefore, $J<\infty$ and the proof is complete.
\subsection{Proof of Proposition \ref{prop:f^s} 
}\label{app:3}
Throughout this proof, $K$ represents a positive constant, not necessarily the same at each occurrence. As $h$ can be chosen independently of $K$, we always assume $2Kh<1$. For notational simplicity, we suppress the stopping time $\tau_a^{\mathrm{D}^h}$ in the subsequent analysis. Adopting the same notations as in the proof of Theorem \ref{th:verify}.

Observe that $\sup\limits_{t\leq r\leq t+h}|M_r^{\mathrm{D}^h}|\leq |A|+|m|$, where $A$ denotes a lower bound of the function $a$.
Using Eq. (\ref{eq:X0}) 
and Condition (a) in Assumption \ref{as:allpaper} 
yields
\begin{small}
\begin{align*}
\mathrm{D}_{t+h}^h-\mathrm{D}_{t-}^{h} &= X^{\mathrm{D}^h}_{t-}-X_{t+h}^{\mathrm{D}^h} + \int_t^{t+h}\!\!\mu(X_r^{\mathrm{D}^h},M_r^{\mathrm{D}^h},\mathrm{D}_r,r)\d r + \int_t^{t+h}\!\!\sigma(X_r^{\mathrm{D}^h},M_r^{\mathrm{D}^h},\mathrm{D}_r,r)\d B_r \\[-0.8ex]
&\leq K + \sup_{r\in[t,t+h]}\{X_r^{\mathrm{D}^h}\} + hK\big(1+\sup_{r\in[t,t+h]}\{X_r^{\mathrm{D}^h}\} + \mathrm{D}_{t+h}^h-\mathrm{D}_{t-}^{h}\big) + I(h).
\end{align*}
\end{small}
for any perturbation $\{\mathrm{D}^{h}_r\}_{r\geq t}\in \mathcal{D}_t$, where we define
$$I(h):=\int_t^{t+h}\sigma(X_r^{\mathrm{D}^h},M_r^{\mathrm{D}^h},\mathrm{D}_r,r)\d B_r.$$
Thus, applying the Cauchy-Schwarz inequality and the It\^{o} isometry yields 
\begin{small}
\begin{align*}
\mathbb{E}\big[(\mathrm{D}_{t+h}^h)^2\big] &\leq K + K\mathbb{E}\big[(\mathrm{D}_{t+h}^h-\mathrm{D}_{t-}^{h})^2\big] \\[-0.8ex]
&\leq K \mathbb{E}\Big[\big(1 + \sup_{r\in[t,t+h]}\{X_r^{\mathrm{D}^h}\}\big)^2\Big] + K\mathbb{E}[I^2(h)] \\[-0.8ex]
&\leq K \mathbb{E}\Big[\big(1 + \sup_{r\in[t,t+h]}\{X_r^{\mathrm{D}^h}\}\big)^2\Big] + hK\mathbb{E}\Big[1 + \sup_{r\in[t,t+h]}\{(X_r^{\mathrm{D}^h})^2\} + (\mathrm{D}_{t+h}^h)^2\Big].
\end{align*}
\end{small}
Therefore, we have
\begin{small}
\begin{align}\label{eq:D-D}
\E\big[\left(\mathrm{D}_{t+h}^h\right)^2\big]\leq K \E\Big[\Big(1+\sup_{t\leq r\leq t+h}\{X_r^{\mathrm{D}^h}\}\Big)^2\Big].
\end{align}
\end{small}
Similarly, 
\begin{small}
\begin{align}\label{eq:DX}
&\E\Big[\big(1+\sup_{r\in[t,t+h]}\{X_r^{\mathrm{D}^h}\}\big)(\mathrm{D}_{t+h-}^h-\mathrm{D}_{t}^{h})\Big] \\[-0.5ex]
&\leq K \E\Big[\big(1+\sup_{r\in[t,t+h]}\{X_r^{\mathrm{D}^h}\}\big)^2\Big] + K\sqrt{\E[I^2(h)] \cdot \E\Big[\big(1+\sup_{r\in[t,t+h]}\{X_r^{\mathrm{D}^h}\}\big)^2\Big]}\notag \\[-0.5ex]
&\leq K \E\Big[\big(1+\sup_{r\in[t,t+h]}\{X_r^{\mathrm{D}^h}\}\big)^2\Big] + Kh\E[(\mathrm{D}^h_{t+h})^2]\notag \\[-0.5ex]
&\leq K \E\Big[\big(1+\sup_{r\in[t,t+h]}\{X_r^{\mathrm{D}^h}\}\big)^2\Big].\notag
\end{align}
\end{small}
where we have just used Eq. (\ref{eq:D-D}) in the last inequality.
Moreover, for any $s\in[t,t+h]$, we have
$\int_t^{s}(X_r^{\mathrm{D}^h}-A)\,\mathrm{d}\mathrm{D}_r^h\geq 0$.

Next,  we show
\begin{small}
$$\E_{x,m,y,t}\!\Big[\bigl(\sup\limits _{s\in[t,t+h]}|X_s^{\mathrm{D}^h}|\bigr)^2\Big]<\infty.$$
\end{small}
Applying the It\^{o}-Tanaka-Meyer formula, we obtain
\begin{small}
\begin{align}\label{eq:fx1}
&\sup_{s\in[t,t+h]}\!(X^{\mathrm{D}^h}_{s}\!-\!A)^2 \\[-0.5ex]
&\le \sup_{s\in[t,t+h]} \bigg\{ \int_t^{s} \!\! \Big[ 2(X_r^{\mathrm{D}^h}\!-\!A)\mu + \sigma^2 \Big] (X_r^{\mathrm{D}^h},M_r^{\mathrm{D}^h},\mathrm{D}^h_r,r) \mathrm{d}r\notag \\[-0.5ex]
&\qquad - \int_t^{s} \! 2(X_r^{\mathrm{D}^h}\!-\!A)\,\mathrm{d}\mathrm{D}_r^h + \int_t^{s} \! 2(X_r^{\mathrm{D}^h}\!-\!A)\sigma(X_r^{\mathrm{D}^h},M_r^{\mathrm{D}^h},\mathrm{D}^h_r,r)\,\mathrm{d}B_r \bigg\} + K \notag\\[-0.5ex]
&\le K + Kh \sup_{s\in[t,t+h]}\!(X^{\mathrm{D}^h}_{s})^2 + \sup_{s\in[t,t+h]} \int_t^{s} \! 2(X_r^{\mathrm{D}^h}\!-\!A)\sigma(X_r^{\mathrm{D}^h},M_r^{\mathrm{D}^h},\mathrm{D}^h_r,r)\,\mathrm{d}B_r.\notag
\end{align}
\end{small}
Using the Burkholder-Davis-Gundy (abbr. BDG) inequality and the Cauchy-Schwarz inequality yields
\begin{small}
\begin{align}\label{eq::fx2}
&\mathbb{E}\bigg[\sup_{s\in[t,t+h]} \int_t^{s} 2(X_r^{\mathrm{D}^h}-A)\sigma(X_r^{\mathrm{D}^h},M_r^{\mathrm{D}^h},\mathrm{D}^h_r,r)\d B_r \bigg]\notag \\[-0.5ex]
&\le K\mathbb{E}\bigg[ \Big( \int_t^{t+h} (X_s^{\mathrm{D}^h}-A)^2 (1+|X_s^{\mathrm{D}^h}|)^2 \d s \Big)^{1/2} \bigg]\notag \\[-0.5ex]
&\le K\mathbb{E}\bigg[ \big(1+\sup_{s\in[t,t+h]}|X_s^{\mathrm{D}^h}|\big) \Big( \int_t^{t+h} (1+|X_s^{\mathrm{D}^h}|)^2 \d s \Big)^{1/2} \bigg]\notag \\[-0.5ex]
&\le \mathbb{E}\bigg[ \frac{1}{4} \big(\sup_{s\in[t,t+h]}|X_s^{\mathrm{D}^h}|\big)^2 + K \int_t^{t+h} (X_s^{\mathrm{D}^h})^2 \d s + K \bigg] \notag\\[-0.5ex]
&\le \mathbb{E}\bigg[ \big(\frac{1}{4} + Kh\big) \sup_{s\in[t,t+h]} |X_s^{\mathrm{D}^h}|^2 + K \bigg].
\end{align}
\end{small}
In addition, we have
\begin{small}
\begin{align}\label{eq:fx3}
\sup_{s\in[t,t+h]}\!(X_s^{\mathrm{D}^h}-A)^2
&\ge \sup_{s\in[t,t+h]}\!|X_s^{\mathrm{D}^h}|^2 - 2|A|\!\sup_{s\in[t,t+h]}\!|X_s^{\mathrm{D}^h}| + A^2 \notag \\[-0.8ex]
&\ge \frac{3}{4} \sup_{s\in[t,t+h]}\!|X_s^{\mathrm{D}^h}|^2 - 3A^2.
\end{align}
\end{small}
Combining Eqs. (\ref{eq:fx1})-(\ref{eq:fx3}) yields
\begin{small}
\begin{align}\label{eq:X^2<inf}
\E\!\Big[\sup_{s\in[t,t+h]}|X^{\mathrm{D}^h}_{s}|^2\Big]<\infty
\end{align}
\end{small}
for some sufficiently small $h>0$.

Following the proof of Theorem \ref{th:verify}
, 
in order to verify the equilibrium property, it is sufficient to show
\begin{small}
\begin{align}
\limsup_{h\to0+}\E\Big[\int_t^{t+h} \!\! \frac{f_s(X_{(t+h)-}^{\mathrm{D}^h},M_{(t+h)-}^{\mathrm{D}^h},\mathrm{D}^h_{(t+h)-},t+h,r) -f_s(X_r^{\mathrm{D}^h},M_r^{\mathrm{D}^h},\mathrm{D}^h_r,r,r)}{h}  \d r \Big] \le 0, \label{eq:J11} \\[-0.5ex]
\limsup_{h\to0+}\E\Big[\int_t^{t+h} \!\! \tfrac{\beta(r-t)-1}{h} H(X_r^{\mathrm{D}^h},M_r^{\mathrm{D}^h},\mathrm{D}^h_r,r) \d r \Big] \le 0, \label{eq:J12} \\[-0.5ex]
\limsup_{h\to0+}\E\Big[\int_t^{t+h} \!\! \tfrac{\beta(r-t)-1}{h} c(X_r^{\mathrm{D}^h},M_r^{\mathrm{D}^h},\mathrm{D}^h_r,r) \diamond \d \mathrm{D}^h_r \Big] \le 0, \label{eq:J13} \\[-0.5ex]
\limsup_{h\to0+}\E\Big[\int_t^{t+h} \!\! \tfrac{\beta(r-t)-1}{h} h(M_r^{\mathrm{D}^h},\mathrm{D}^h_r,r) \square \d M_r^{\mathrm{D}^h} \Big] \le 0. \label{eq:J14}
\end{align}
\end{small}
In fact, adding Eqs. (\ref{eq:J11})-(\ref{eq:J14}) to Eq. (\ref{eq:JJ})
and using Eq. (\ref{eq:ver1})
yield Eq. (\ref{eq:equilibrium})
,  thus we conclude that $\hat{\mathrm{D}}$ is an equilibrium singular control.

We now prove Eqs. (\ref{eq:J11}) and (\ref{eq:J13}); the proofs of Eqs. (\ref{eq:J12}) and (\ref{eq:J14}) are analogous.  
Using Eq. (\ref{eq:module})
, the Jensen inequality, Eqs. (\ref{eq:D-D}) and (\ref{eq:X^2<inf}), we obtain
\begin{small}
\begin{align*}
&\mathbb{E}\bigg[\int_t^{t+h} \!\! \Big| \tfrac{f_s(X_{(t+h)-}^{\mathrm{D}^h},M_{(t+h)-}^{\mathrm{D}^h},\mathrm{D}^h_{(t+h)-},t+h,r) - f_s(X_r^{\mathrm{D}^h},M_r^{\mathrm{D}^h},\mathrm{D}^h_r,r,r)}{h} \Big| \d r \bigg] \\[-0.8ex]
&\le K\mathbb{E}\Big[\sup_{r\in[t,t+h]}\!\big\{1+\sup_{s\in[t,t+h)}\!(X^{\mathrm{D}^h}_{s})^2+\sup_{s\in[t,t+h)}\!(M^{\mathrm{D}^h}_{s})^2+(\mathrm{D}_{t+h}^h)^2\big\}\Big] < \infty.
\end{align*}
\end{small}
Thus, the integrability condition for Fatou's lemma holds. Then, by the right-continuity of $\{X^{\mathrm{D}^h}_s\}_{s\geq t}$ and $\{\mathrm{D}^h_s\}_{s\geq t}$, we have
\begin{small}
\begin{align*}
\limsup_{h\to 0^+} \mathbb{E} \bigg[ \frac{1}{h} \int_t^{t+h} \!\!\! \big( f_s(\textstyle X_{(t+h)\text{-}}^{\mathrm{D}^h},M_{(t+h)\text{-}}^{\mathrm{D}^h},\mathrm{D}^h_{(t+h)\text{-}},t\!+\!h,r) - f_s(X_{r}^{\mathrm{D}^h},M_{r}^{\mathrm{D}^h},\mathrm{D}^h_{r},r,r) \big) \mathrm{d}r \bigg] \le 0.
\end{align*}
\end{small}
Then we prove Eq. (\ref{eq:J13}). Using Eqs. (\ref{eq:module2})
, (\ref{eq:D-D}) and (\ref{eq:DX}), we obtain
\begin{small}
\begin{align}
&\E\bigg[\int_t^{t+h} \!\! \Big| \tfrac{\beta(r-t)-1}{h} c(X_r^{\mathrm{D}^h},M_r^{\mathrm{D}^h},\mathrm{D}_r^h,r) \Big| \diamond \d\mathrm{D}_r^h \bigg]  \\[-0.6ex]
&\le K\E\bigg[ \int_t^{t+h} \!\! \tfrac{r-t}{h} \big(1+\sup_{s\in[t,t+h)}\!|X_s^{\mathrm{D}^h}|+\mathrm{D}_{t+h}^h\big) \diamond \d\mathrm{D}_r^h \bigg] \notag \\[-0.6ex]
&\le K\E\bigg[ \big(1+\sup_{s\in[t,t+h)}\!|X_s^{\mathrm{D}^h}|+\mathrm{D}_{t+h}^h\big) \!\! \int_t^{t+h} \!\! \diamond \d\mathrm{D}_r^h \bigg] \le K\E\Big[1+\big(\sup_{s\in[t,t+h)}\!|X_s^{\mathrm{D}^h}|+\mathrm{D}_{t+h}^h\big)^2\Big]\notag\\& < \infty,\notag
\end{align}
\end{small}
Fix an $\omega\in\Omega$ and take $h<1$. By the right‑continuity of $\{\mathrm{D}_t^h\}_{t\ge0}$ and  Fatou's lemma, we obtain
\begin{small}
\begin{align*}
&\limsup_{h\to0+}\E\bigg[\int_t^{t+h} \!\! \tfrac{\beta(r-t)-1}{h} c(X_r^{\mathrm{D}^h},M_r^{\mathrm{D}^h},\mathrm{D}_r^h,r) \diamond \d\mathrm{D}_r^h(\omega) \bigg] \\[-0.6ex]
&\le K\E\bigg[\limsup_{h\to0+} \Big\{ \big(1+\sup_{s\in[t,t+1)}\!|X_s^{\mathrm{D}^h}(\omega)|\big) \!\! \int_t^{t+h} \!\! \diamond \d\mathrm{D}_r^h(\omega) \Big\} \bigg] \\[-0.6ex]
&= K\E\bigg[\limsup_{h\to0+} \Big\{ \big(1+\sup_{s\in[t,t+1)}\!|X_s^{\mathrm{D}^h}(\omega)|\big) \big(\mathrm{D}_{t+h}(\omega)-\mathrm{D}_t(\omega)\big) \Big\} \bigg] = 0.
\end{align*}
\end{small}
Thus, the proof is complete.
\end{document}